\numberwithin{equation}{section}
\newtheorem{theorem}{Theorem}[section]
\newtheorem{corollary}[theorem]{Corollary}
\newtheorem{lemma}[theorem]{Lemma}
\newtheorem{proposition}[theorem]{Proposition}
\theoremstyle{definition}
\newtheorem{definition}[theorem]{Definition}
\newtheorem{remark}[theorem]{Remark}
\newtheorem{example}[theorem]{Example}
\newtheorem{question}[theorem]{Question}
\newcommand{\simp}{\operatorname{\mathsf{sim}}}
\newcommand{\brick}{\mathsf{brick}\hspace{.01in}}
\newcommand{\sbrick}{\mathsf{sbrick}\hspace{.01in}}
\newcommand{\wide}{\mathsf{wide}\hspace{.01in}}
\newcommand{\FPdim}{\mathsf{FPdim}\hspace{.01in}}
\renewcommand{\top}{\operatorname{top}\nolimits}
\newcommand{\sttilt}{\operatorname{\mathsf{\tau -tiltp}}}
\newcommand{\trigidp}{\operatorname{\mathsf{\tau -rigidp}}}
\renewcommand{\mod}{\mathsf{mod}\hspace{.01in}}
\newcommand{\kD}{\mathbb{D}}
\newcommand{\add}{\mathsf{add}\hspace{.01in}}
\newcommand{\Fac}{\mathsf{Fac}\hspace{.01in}}
\renewcommand{\ker}{\mathsf{Ker}\hspace{.01in}}
\newcommand{\im}{\mathsf{Im}\hspace{.01in}}
\newcommand{\Ext}{\operatorname{Ext}\nolimits}
\newcommand{\End}{\operatorname{End}\nolimits}
\newcommand{\Hom}{\operatorname{Hom}\nolimits}
\newcommand{\rad}{\operatorname{rad}\nolimits}
\newcommand{\soc}{\operatorname{soc}\nolimits}
\newcommand{\diag}{\operatorname{diag}}
\begin{document}
\title[FP dimension via $\tau$-tilting theory]{Frobenius--Perron dimension via $\tau$-tilting theory}

\author{Takahide Adachi}
\address{T.~Adachi: Faculty of Global and Science Studies, Yamaguchi University, 1677-1 Yoshida, Yamaguchi 753-8511, Japan}
\email{tadachi@yamaguchi-u.ac.jp}

\author{Ryoichi Kase}
\address{R.~Kase: Department of Information Science and Engineering, Okayama University of Science, 1-1 Ridaicho, Kita-ku, Okayama-shi 700-0005, Japan}
\email{r-kase@ous.ac.jp}

\date{\today}
\subjclass{16E10, 16G20, 16G60}
\keywords{Frobenius--Perron dimension, $\tau$-tilting theory, $\tau$-tilting finite algebras}

\begin{abstract}
From the perspective of $\tau$-tilting theory, we study Frobenius--Perron dimensions of finite-dimensional algebras. First, we evaluate the Frobenius--Perron dimensions of $\tau$-tilting finite algebras by a combinatorial method in $\tau$-tilting theory. Secondly, we give the upper bound for the Frobenius--Perron dimension for $\tau$-tilting finite algebras of tame representation type. Thirdly, we determine the Frobenius--Perron dimensions of Nakayama algebras and generalized preprojective algebras of Dynkin type in the sense of Gei\ss--Leclerc--Schr\"oer.
\end{abstract}
\maketitle

\section{Introduction}

The Frobenius--Perron dimension of an endofunctor of a linear category over a field was introduced in \cite{CGWZ19}. This is a generalization of the Frobenius--Perron dimension of an object in a fusion category (\cite{FK93, ENO05}). Recently, as a special case, the Frobenius--Perron dimension of an algebra $A$ is defined as
\begin{align}
\FPdim(A):=\sup\{ \rho(Q_{\mathcal{S}}) \mid \text{$\mathcal{S}$: finite semibrick in $\mod A$}\},\notag
\end{align}
where $\mod A$ denotes the category of finitely generated right $A$-modules, $Q_{\mathcal{S}}$ is the Ext-quiver of $\mathcal{S}$ and $\rho(Q_{\mathcal{S}})$ is the spectral radius of the adjacent matrix of $Q_{\mathcal{S}}$.
It is shown in \cite{CGWZ19,CGWZ23} that Frobenius--Perron dimension has a connection with representation type.
In particular, trichotomy theorem for representation type of path algebras holds.

\begin{theorem}[{\cite[Theorem 0.3]{CGWZ19}}]\label{thm:CGWZ-thm03}
Let $Q$ be a finite connected quiver and let $A$ be its path algebra over an algebraically closed field.
Then the following statements hold.
\begin{itemize}
\item[(1)] $A$ is of finite representation type if and only if $\FPdim(A)=0$.
\item[(2)] $A$ is of tame representation type if and only if $\FPdim(A)=1$.
\item[(3)] $A$ is of wild representation type if and only if $\FPdim(A)=\infty$.
\end{itemize}
\end{theorem}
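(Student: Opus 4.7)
The plan is to combine the Gabriel--Donovan--Freislich--Nazarova classification of path-algebra representation type (finite iff Dynkin, tame iff extended Dynkin, wild otherwise) with a direct Euler-form analysis of the Ext-quiver of a semibrick. For a hereditary algebra $A = kQ$ and a semibrick $\mathcal{S} = \{B_1, \ldots, B_n\}$ in $\mod A$, the key observation is that $\dim \Hom(B_i, B_j) = \delta_{ij}$ together with $A$ being hereditary makes the symmetrized Euler form $\langle [B_i], [B_j] \rangle + \langle [B_j], [B_i] \rangle$ representable on the sublattice $\mathbb{Z}\langle [\mathcal{S}] \rangle \subset K_0(\mod A)$ by the matrix $2I - (Q_\mathcal{S} + Q_\mathcal{S}^T)$; moreover the classes $\{[B_i]\}$ are linearly independent, since simples of a wide subcategory of a hereditary algebra always are (via the Ingalls--Thomas bijection and Schofield's perpendicular calculus). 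Combined with the elementary Perron--Frobenius inequality $\rho(M) \leq \rho(M+M^T)/2$ for non-negative $M$, the positive (semi)definiteness of the Euler form controls $\rho(Q_\mathcal{S})$.

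For (1), if $Q$ is Dynkin the Euler form is positive definite, so $2I - (Q_\mathcal{S} + Q_\mathcal{S}^T)$ is positive definite, giving $\rho(Q_\mathcal{S} + Q_\mathcal{S}^T) < 2$ and hence $\rho(Q_\mathcal{S}) < 1$. Since $Q_\mathcal{S}$ has non-negative integer entries, $\rho < 1$ forces $Q_\mathcal{S}$ to be nilpotent --- any directed cycle (including a loop) at a vertex $v$ of length $\ell$ would yield $(Q_\mathcal{S}^\ell)_{vv} \geq 1$ and thus $\rho \geq 1$ --- so $\rho(Q_\mathcal{S}) = 0$ and $\FPdim(A) = 0$. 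For (2), if $Q$ is extended Dynkin the Euler form is positive semidefinite and the same calculation gives $\rho(Q_\mathcal{S}) \leq 1$; taking $\mathcal{S} = \{B\}$ with $B$ the mouth of a homogeneous tube (a brick with $\dim \Ext^1(B,B) = 1$) attains $\rho(Q_\mathcal{S}) = 1$. For (3), if $Q$ is wild then by Kac's theorem on root multiplicities together with Kerner's constructions, for every $d \geq 1$ there is a brick $B_d$ with $\dim \Ext^1(B_d, B_d) \geq d$; then $\mathcal{S} = \{B_d\}$ satisfies $\rho(Q_\mathcal{S}) \geq d$, so $\FPdim(A) = \infty$. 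The converse directions follow by the trichotomy of representation type combined with the disjointness of the forward conclusions $\{0, 1, \infty\}$.

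The main obstacle will be the wild-case lower bound: producing bricks with arbitrarily many self-extensions. The tame case produces self-loops via the explicit and elementary mechanism of homogeneous tubes, but in the wild case realising imaginary roots by actual bricks (rather than mere indecomposables) with large $\dim \Ext^1$ is delicate and draws on Kac's theorem and the geometry of representation varieties. Once that input is granted, the remainder of the proof is a clean application of the Euler-form identity and basic Perron--Frobenius theory for non-negative integer matrices.
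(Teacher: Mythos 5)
The paper does not prove Theorem~\ref{thm:CGWZ-thm03}; it is quoted directly from \cite[Theorem~0.3]{CGWZ19}, so there is no internal argument to compare against. Taken on its own, your sketch is the standard Euler-form proof and it is essentially sound: the identity $\langle [B_i],[B_j]\rangle=\delta_{ij}-\dim_{\Bbbk}\Ext^1(B_i,B_j)$ for a semibrick over a hereditary algebra, the symmetrization to $2I-(M+M^T)$, the inequality $2\rho(M)\le\rho(M+M^T)$ for nonnegative $M$, and the nilpotency of a nonnegative integer matrix of spectral radius below $1$ are all correct, and the wild case is indeed the hard step, requiring bricks with unboundedly negative Tits form (anisotropic Schur roots $n\alpha$ in the fundamental region, via Kac's theorem) rather than mere indecomposables.

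One assertion in your justification is false as stated, although it does not sink the argument where you actually need it. You claim that the classes $[B_i]$ of the simples of a wide subcategory of a hereditary algebra are always linearly independent in $K_0$. This fails already in the extended Dynkin case: quasi-simples at the mouths of two distinct homogeneous tubes form a semibrick, yet both have class $\delta$, the radical of the Euler form. Fortunately, your proof of part (2) never uses independence: positive \emph{semi}definiteness of the Euler form on all of $K_0\otimes\mathbb{R}$ already forces $2I-(M+M^T)$ to be positive semidefinite, hence $\rho(M)\le 1$, whether or not the $[B_i]$ are independent. Independence \emph{is} genuinely needed in part (1) to upgrade semidefinite to definite and conclude $\rho(M)<1$, and there it does hold, but the right justification is specific to the Dynkin case (wide subcategories are exceptional subcategories, so the simples form an exceptional sequence with linearly independent dimension vectors), not the blanket hereditary statement you wrote. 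You should restrict the claim accordingly and supply the finite-type reference for it; once that is done the proof is complete.
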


The Frobenius--Perron dimension of an algebra has been calculated in some classes, such as modified ADE bound quiver algebras (\cite{W19}), representation-directed algebras (\cite{CC23}), loop-reduced algebras (\cite{CC24}).

In this paper, we study the Frobenius--Perron dimension of an algebra from the viewpoint of $\tau$-tilting theory. 
One of the remarkable results in $\tau$-tilting theory is the connection of various objects, containing semibricks, in the representation theory of algebras through $\tau$-tilting pairs.
In particular, for a $\tau$-tilting finite algebra, that is, an algebra with finitely many $\tau$-tilting pairs, semibricks bijectively correspond to $\tau$-tilting pairs (\cite{As20}). 
Due to the finiteness of the set of semibricks, the following result holds.

\begin{theorem}
If $A$ is a $\tau$-tilting finite algebra, then the Frobenius--Perron dimension of $A$ is finite.
\end{theorem}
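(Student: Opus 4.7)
The plan is to reduce $\FPdim(A)$ to the maximum over a finite collection of finite numbers. The key inputs are the $\tau$-tilting finiteness hypothesis and Asai's bijection between $\tau$-tilting pairs and semibricks, together with the characterization of $\tau$-tilting finiteness in terms of bricks.

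First, I would recall that for a $\tau$-tilting finite algebra $A$, the set of semibricks in $\mod A$ is in bijection with the (finite) set of $\tau$-tilting pairs via Asai's theorem \cite{As20}. In particular, there are only finitely many semibricks in $\mod A$. Moreover, by the Demonet--Iyama--Jasso characterization, a $\tau$-tilting finite algebra admits only finitely many isomorphism classes of bricks, so every semibrick $\mathcal{S} \subseteq \mod A$ is automatically a finite set. Consequently, the ``finite semibrick'' clause in the definition of $\FPdim(A)$ becomes redundant: the supremum is actually taken over a finite collection of semibricks.

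Next, for each such semibrick $\mathcal{S}$, the Ext-quiver $Q_{\mathcal{S}}$ has finitely many vertices (indexed by the elements of $\mathcal{S}$) and, because $A$ is a finite-dimensional algebra and each pair of bricks in $\mathcal{S}$ has a finite-dimensional $\Ext^{1}$-space, finitely many arrows between any two vertices. Hence the adjacency matrix of $Q_{\mathcal{S}}$ is a finite nonnegative integer matrix, and its spectral radius $\rho(Q_{\mathcal{S}})$ is a finite real number.

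Combining these two observations, $\FPdim(A)$ is the supremum of finitely many finite nonnegative real numbers, and is therefore finite (in fact, attained as a maximum). There is no serious obstacle; the only point requiring care is verifying that all Ext-quivers appearing in the definition have finite entries, which follows at once from the finite-dimensionality of $A$ and the finiteness of each semibrick established above.
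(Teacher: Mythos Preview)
Your proof is correct and follows essentially the same approach as the paper: both use Asai's bijection \cite{As20} to conclude that $\sbrick(A)$ is finite, whence $\FPdim(A)$ is a supremum over a finite set and hence finite. Your version is slightly more detailed in also invoking the Demonet--Iyama--Jasso brick-finiteness criterion and in explicitly noting that each $\rho(Q_{\mathcal{S}})$ is finite, but these are minor elaborations rather than a different strategy.
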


For an algebra $A$, it is known that the set $\sttilt(A)$ of isomorphism classes of basic $\tau$-tilting pairs for $A$ admits a partial order in a natural way. 
Furthermore, if $A$ is $\tau$-tilting finite, the poset $\sttilt(A)$ forms a lattice (\cite{IRTT15}).
In \cite{Ka24}, it was shown that the Ext-quiver of a semibrick is determined by the lattice structure in $\sttilt(A)$, except for loops and multiple arrows.
Based on the idea in \cite{Ka24}, we introduce the notion of the Frobenius--Perron dimension of a finite lattice, which can be calculated in a combinatorial method.

Our first aim is to compare the Frobenius--Perron dimension $\FPdim(A)$ of a $\tau$-tilting finite algebra $A$ and the Frobenius--Perron dimension $\FPdim(\sttilt(A))$ of the finite lattice $\sttilt(A)$.

\begin{theorem}[Theorem \ref{thm:ul_bound}]
Let $A$ be a $\tau$-tilting finite algebra over an algebraically closed field $\Bbbk$.
Then we have 
\begin{align}
\max\{ \FPdim(\sttilt(A)), d_{b}\}\leq \FPdim(A) \leq\FPdim(\sttilt(A))+d_{b},\notag
\end{align}
where $d_{b}:=\max\{\dim_{\Bbbk}\Ext_{A}^{1}(X,X)\mid X: \text{brick in $\mod A$}\}$.
\end{theorem}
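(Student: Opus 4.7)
The plan is to prove the two lower bounds and the upper bound separately, using entrywise comparisons of non-negative matrices together with Perron--Frobenius monotonicity; the genuine content lies in relating the loop-free part of each Ext-quiver to the lattice combinatorics of $\sttilt(A)$ via \cite{Ka24}.

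For the lower bound $\FPdim(A)\geq d_{b}$, I would pick a brick $X\in\mod A$ attaining $\dim_{\Bbbk}\Ext_{A}^{1}(X,X)=d_{b}$; the singleton $\{X\}$ is a semibrick whose Ext-quiver has one vertex carrying $d_{b}$ loops, so $\rho(Q_{\{X\}})=d_{b}$. For $\FPdim(A)\geq\FPdim(\sttilt(A))$, I would use the bijection (\cite{As20}) between semibricks and elements of $\sttilt(A)$ together with \cite{Ka24} to match, for each semibrick $\mathcal{S}$, the combinatorial quiver on the lattice side that contributes to $\FPdim(\sttilt(A))$ with a subquiver of $Q_{\mathcal{S}}$; monotonicity of the Perron root on non-negative matrices and taking suprema over semibricks then yield the bound.

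For the upper bound I would fix a semibrick $\mathcal{S}=\{S_{1},\dots,S_{n}\}$ with Ext-quiver adjacency matrix $M_{\mathcal{S}}$ and decompose
\[
M_{\mathcal{S}}=N_{\mathcal{S}}+D_{\mathcal{S}},\qquad D_{\mathcal{S}}=\diag\bigl(\dim_{\Bbbk}\Ext_{A}^{1}(S_{i},S_{i})\bigr)_{i=1}^{n},
\]
so that $N_{\mathcal{S}}$ is the off-diagonal (loop-free) part. By definition of $d_{b}$ we have $D_{\mathcal{S}}\leq d_{b}\cdot I$ entrywise, while the key lattice-theoretic lemma would assert $\rho(N_{\mathcal{S}})\leq \FPdim(\sttilt(A))$. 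Combining these with monotonicity of $\rho$ and the elementary Perron--Frobenius identity $\rho(N+cI)=\rho(N)+c$ (valid for $N\geq 0$ and $c\geq 0$, since the eigenvalues of $N+cI$ are those of $N$ shifted by $c$ and the Perron eigenvalue of $N$ is real and non-negative) gives
\[
\rho(M_{\mathcal{S}})\leq \rho(N_{\mathcal{S}}+d_{b}I)=\rho(N_{\mathcal{S}})+d_{b}\leq \FPdim(\sttilt(A))+d_{b},
\]
and taking the supremum over all semibricks yields the desired upper bound.

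The hard part will be the lattice-theoretic bound $\rho(N_{\mathcal{S}})\leq \FPdim(\sttilt(A))$: this is what converts Kase's structural theorem from \cite{Ka24} into a spectral statement. The precise form of the argument depends on the paper's combinatorial definition of $\FPdim(\sttilt(A))$ introduced a few lines earlier --- if the lattice-side quiver records only the $0/1$ adjacency skeleton of the loop-free Ext-quiver, an auxiliary step must be inserted to absorb the multiplicities $\dim_{\Bbbk}\Ext_{A}^{1}(S_{i},S_{j})$ for $i\neq j$, for example by producing, from a pair $(S_{i},S_{j})$ with large $\Ext^{1}$, a brick whose self-extension dimension recovers these numbers and is therefore controlled by $d_{b}$. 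Once this matching is in place, the rest is a formal application of the Perron--Frobenius inequalities above.
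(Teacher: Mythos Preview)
Your strategy is the paper's: a brick realizing $d_{b}$ gives the first lower bound, and the identification of the loop-free Ext-quiver $Q_{\mathcal{S}}^{\circ}$ with a lattice-side quiver $Q(u)$ yields the sandwich $Q(u)\subseteq Q_{\mathcal{S}}\subseteq Q(u)^{d_{b}}$ (the latter being $Q(u)$ with $d_{b}$ loops added at every vertex), hence $\rho(Q(u))\le\rho(Q_{\mathcal{S}})\le\rho(Q(u))+d_{b}$. Your matrix decomposition $M_{\mathcal{S}}=N_{\mathcal{S}}+D_{\mathcal{S}}$ is just a restatement of this.

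The one place your outline wobbles is the final paragraph. The multiplicity worry is illusory: since $A$ is $\tau$-tilting finite, every semibrick is $\simp(\mathcal{W}(M,P))$ for some $\tau$-rigid pair, the $\tau$-tilting reduction $A(M,P)$ is again $\tau$-tilting finite (its $\tau$-tilting pairs sit inside $\sttilt_{(M,P)}(A)$), and therefore its Gabriel quiver---which \emph{is} $Q_{\mathcal{S}}$ by Proposition~\ref{prop:DIRRT-thm412}---already has no multiple arrows (Remark~\ref{rem:tfin-quiver-nomult}(2) applied to $A(M,P)$). Thus $N_{\mathcal{S}}$ has off-diagonal entries in $\{0,1\}$ from the start, and the paper's Proposition~\ref{prop:kj-quiver}(2), built on Kase's theorem, gives the exact equality of sets
\[
\{Q_{\mathcal{S}}^{\circ}\mid \mathcal{S}\in\sbrick(A)\}=\{Q(u)\mid u\in\mathcal{U}^{+}(\sttilt(A))\},
\]
not merely a skeleton comparison. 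Your proposed workaround---manufacturing a brick with large self-extensions from a pair $(S_{i},S_{j})$ with large $\Ext^{1}$---is neither needed nor obviously available. One small quantifier slip: for the lower bound $\FPdim(\sttilt(A))\le\FPdim(A)$ you must start from an arbitrary $u$ and produce a matching $\mathcal{S}$, not the other way around; this is immediate from the equality of sets just cited.
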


Our second aim is to give an upper bound for the Frobenius--Perron dimension of $\tau$-tilting finite algebras of tame representation type.
Such algebras include Brauer graph algebras with graphs having no even cycles and at most one odd cycle (see \cite{AAC18}).

\begin{theorem}[Theorem \ref{thm:ub_rf}]
Let $A$ be a $\tau$-tilting finite algebra.
If $A$ is of tame representation type, then the Frobenius--Perron dimension is at most two. 
Furthermore, if $A$ is of finite representation type, then the Frobenius--Perron dimension is less than two.
\end{theorem}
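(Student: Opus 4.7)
The plan is to combine the upper bound $\FPdim(A) \leq \FPdim(\sttilt(A)) + d_b$ from Theorem~\ref{thm:ul_bound} with structural restrictions imposed by (tame) representation type. Writing $d_s := \FPdim(\sttilt(A))$, it suffices to show $d_s + d_b \leq 2$ in the tame case and $d_s + d_b < 2$ in the finite case.

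First I would bound $d_b$. If $A$ is of tame representation type, the goal is to show that every brick $X$ satisfies $\dim_{\Bbbk} \Ext^1_A(X,X) \leq 1$: a brick with at least two independent self-extensions has, infinitesimally, a two-parameter family of pairwise non-isomorphic indecomposable deformations (equivalently, the category of modules over the quiver with one vertex and two loops embeds into $\mod A$), which is incompatible with tameness. Hence $d_b \leq 1$. If $A$ is representation-finite, a brick with any nontrivial self-extension produces an infinite family of pairwise non-isomorphic indecomposables via iterated extensions, contradicting representation-finiteness; so $d_b = 0$.

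Next I would bound $d_s$. Since loops and multiple arrows in the Ext-quiver $Q_\mathcal{S}$ of a semibrick $\mathcal{S}$ are absorbed into the $d_b$ term, the quiver governing $d_s$ is a simple quiver whose shape is determined combinatorially by the lattice $\sttilt(A)$ (in the spirit of \cite{Ka24}). For tame (resp.\ finite) $\tau$-tilting finite algebras, I would show that every such simple quiver has underlying graph that is a subgraph of an extended Dynkin (resp.\ Dynkin) diagram, so its spectral radius is at most $1$ (resp.\ strictly less than $2$). Summing with the bound on $d_b$ then yields $\FPdim(A) \leq 2$ in the tame case and $\FPdim(A) < 2$ in the finite case.

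The main obstacle is the bound on $d_s$ in the tame case: translating the hypothesis of tame representation type into a combinatorial constraint on $\sttilt(A)$ is delicate. A plausible strategy is to show that any connected simple subquiver of $Q_\mathcal{S}$ whose underlying graph is neither Dynkin nor extended Dynkin would support a fully faithful embedding of a wild hereditary algebra into $\mod A$, contradicting tameness. Alternatively, for $\tau$-tilting finite Brauer graph algebras in the class of \cite{AAC18}, one can verify the bound case by case using the explicit description of their $\tau$-tilting lattices, and then extend to general tame $\tau$-tilting finite algebras via reduction to this classification.
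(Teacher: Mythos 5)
Your strategy is to split the bound as $\FPdim(A) \le d_s + d_b$ with $d_s := \FPdim(\sttilt(A))$, and then bound each summand separately using representation type. This does not work, and the two individual bounds you assert on $d_b$ are actually false.

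\textbf{The bound on $d_b$ fails.} You claim that finite representation type forces $d_b = 0$: but $A = \Bbbk[x]/(x^2)$ is representation-finite and its unique simple $S$ is a brick with $\dim_\Bbbk \Ext^1_A(S,S) = 1$, so $d_b = 1$. The proposed mechanism (``a brick with any nontrivial self-extension produces an infinite family via iterated extensions'') is incorrect because the iterated extensions need not be indecomposable or non-isomorphic once relations are imposed. Likewise you claim that tameness forces $d_b \le 1$, but the paper's own Example after Theorem~\ref{thm:ub_rf} is $A = \Bbbk\langle a,b\rangle/\langle a^2, b^2, ab-ba\rangle$, a tame $\tau$-tilting finite Brauer graph algebra whose unique simple $S$ has $\dim_\Bbbk \Ext^1_A(S,S) = 2$, so $d_b = 2$. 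Two independent self-extensions do not give a full exact embedding of $\mod \Bbbk\langle x,y\rangle$, because the two extensions can (and here do) satisfy relations that keep the deformation theory tame.

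\textbf{The decomposition itself is too coarse.} Even if you could bound $d_s$ and $d_b$ separately, you need their \emph{sum} to stay below $2$, and these quantities trade off: the Brauer line algebra $BL_n$ is representation-finite with $d_b = 0$ but $d_s = 2\cos(\pi/(n+1)) \to 2$, while the algebra above has $d_s = 0$ and $d_b = 2$. So there is no hope of a uniform bound $d_s \le 1$ for tame algebras (indeed $BL_n$ already violates it for $n \ge 3$), and you would need a joint constraint that the inequality $\FPdim(A) \le d_s + d_b$ is not sharp enough to deliver. The paper sidesteps all of this by \emph{not} stripping the loops: it works directly with the Ext-quiver $Q_{\mathcal{S}}$ of each semibrick, realizes it as the Gabriel quiver of an algebra $B$ with $\mod B \hookrightarrow \mod A$ fully faithfully, passes to the separated quiver $Q^s$ of $B$ (where loops become honest arrows $i^+ \to i^-$), invokes the stable equivalence $\underline{\mod}(B/\rad^2 B) \simeq \underline{\mod}\Bbbk Q^s$ together with preservation of representation type to conclude $Q^s$ is a disjoint union of simply-laced Dynkin or extended Dynkin quivers, and then uses a positive (semi)definite quadratic form on the bipartite quiver $Q^s$ to bound $\rho(Q_{\mathcal{S}}) \le 2$ (strictly, in the finite case). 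The quadratic form argument handles loops, multiple edges between vertices, and large spectral radii in one uniform stroke, which is precisely the joint control your additive decomposition cannot provide.
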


Our third aim is to determine the Frobenius--Perron dimensions of Nakayama algebras and generalized preprojective algebras, which are fundamental classes in the representation theory of algebras.

\begin{theorem}[Theorems \ref{thm:FPdim_of_Nakayama} and \ref{thm:FPdim-preproj}]\label{mainthm:FPdim_NA_PPA}
The following statements hold.
\begin{itemize}
\item[(1)] Let $A$ be a connected Nakayama algebra.
Then we have
\begin{align}
\FPdim(A)=
\begin{cases}
\ 0 &(\text{if $A$ is linear}),\\
\ 1 &(\text{if $A$ is cyclic}).
\end{cases}\notag
\end{align}
\item[(2)] Let $C$ be a Cartan matrix of a Dynkin diagram $\mathsf{X}_n$ and $D$ its symmetrizer.
Let $A:=\Pi(C,D)$ be the generalized preprojective algebra associated with $(C,D)$.
Then we have $\FPdim(A)=\rho(Q)$, where $Q$ is the Gabriel quiver of $A$.
Furthermore, the spectral radius $\rho(Q)$ is given by the following tables.
\begin{table}[ht]
\centering
\caption{$D$: minimal}
\begin{tabular}{c||c|c|c|c|c}\hline\hline
$\mathsf{X}_n$ & $\mathsf{A}_n$ & $\mathsf{B}_n$ &$\mathsf{C}_n$& $\mathsf{D}_n$ 
\\\hline
$\rho(Q)$ & $2\cos(\frac{\pi}{n+1})$ & 
 $1+2\cos(\frac{2\pi}{2n+1})$ &$2\cos(\frac{\pi}{2n+1})$ &
 $2\cos(\frac{\pi}{2(n-1)})$\\
 \hline\hline
$\mathsf{X}_n$ & $\mathsf{E}_6$ & $\mathsf{E}_7$ & $\mathsf{E}_8$&
$\mathsf{F}_4$ & $\mathsf{G}_2$\\\hline
$\rho(Q)$ & $2\cos(\frac{\pi}{12})$ & $2\cos(\frac{\pi}{18})$ & $2\cos(\frac{\pi}{30})$
 & $\frac{1+\sqrt{13}}{2}$ & $\frac{1+\sqrt{5}}{2}$
\\\hline\hline
\end{tabular}\notag
\end{table}
\begin{table}[ht]
\centering
\caption{$D$: non-minimal}
\begin{tabular}{c||c|c|c|c|c}\hline\hline
$\mathsf{X}_n$ & $\mathsf{A}_n$,$\mathsf{B}_n$,$\mathsf{C}_n$,$\mathsf{F}_{4}$,$\mathsf{G}_{2}$& $\mathsf{D}_n$ 
&$\mathsf{E}_6$ & $\mathsf{E}_7$ & $\mathsf{E}_8$
\\\hline
$\rho(Q)$ & $1+2\cos(\frac{\pi}{n+1})$& 
 $1+2\cos(\frac{\pi}{2(n-1)})$
 & $1+2\cos(\frac{\pi}{12})$ & $1+2\cos(\frac{\pi}{18})$ & $1+2\cos(\frac{\pi}{30})$
\\\hline\hline
\end{tabular}\notag
\end{table}
\end{itemize}
\end{theorem}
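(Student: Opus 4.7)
The plan is to prove the two statements separately, each time combining the general bound of Theorem~\ref{thm:ul_bound} with a detailed description of bricks in the given class of algebras.

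For a connected Nakayama algebra $A$ the bricks in $\mod A$ are exactly the uniserial modules whose composition factors (read from top to socle) are pairwise distinct. When $A$ is linear, I would first observe that $A$ is representation-directed, so that $\Ext_{A}^{1}(X,X)=0$ for every brick $X$ and hence $d_{b}=0$; moreover, the linearly ordered Gabriel quiver forces the Ext-quiver of any semibrick to be acyclic, so its spectral radius is $0$, and Theorem~\ref{thm:ul_bound} yields $\FPdim(A)=0$. When $A$ is cyclic, the semibrick $\{S_{1},\dots,S_{n}\}$ of simple modules has Ext-quiver equal to the Gabriel quiver of $A$, an oriented $n$-cycle of spectral radius~$1$, so $\FPdim(A)\geq 1$. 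The matching upper bound is the delicate step: Theorem~\ref{thm:ub_rf} already gives $\FPdim(A)<2$, and I would promote this to $\FPdim(A)\leq 1$ by parametrizing indecomposable bricks of cyclic Nakayama algebras by (starting vertex, length) and checking that for any semibrick the non-vanishing of $\Ext_{A}^{1}(X,Y)$ respects the cyclic order on the Gabriel quiver of $A$, so that every semibrick Ext-quiver is a disjoint union of oriented cycles.

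For $A=\Pi(C,D)$ the lower bound $\FPdim(A)\geq\rho(Q)$ is immediate: $\{S_{1},\dots,S_{n}\}$ is a semibrick whose Ext-quiver is precisely the Gabriel quiver $Q$, since $\dim_{\Bbbk}\Ext_{A}^{1}(S_{i},S_{j})$ equals the number of arrows from $i$ to $j$ in $Q$ (including loops when $i=j$). For the upper bound $\FPdim(A)\leq\rho(Q)$, I plan to use the known parametrization of $\sttilt(A)$ by the Weyl group of $\mathsf{X}_{n}$ under the weak order (Mizuno in the classical simply-laced case, and the Gei\ss--Leclerc--Schr\"oer extension for arbitrary symmetrizer) to enumerate semibricks, and then to exhibit for each semibrick $\mathcal{S}$ a spectrum-dominated embedding $Q_{\mathcal{S}}\hookrightarrow Q$ by showing that every arrow in $Q_{\mathcal{S}}$ is inherited from an arrow of $Q$; combined with Theorem~\ref{thm:ul_bound} this forces $\FPdim(A)\leq\rho(Q)$.

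The closed-form values of $\rho(Q)$ in the two tables follow from classical spectral-graph calculations. For the minimal symmetrizer in a simply-laced type, $Q$ is the double of the underlying Dynkin quiver and $\rho(Q)=2\cos(\pi/h)$, where $h$ is the Coxeter number; this produces the $\mathsf{A}_{n},\mathsf{D}_{n},\mathsf{E}_{6,7,8}$ entries of the first table. For non-simply-laced types or non-minimal symmetrizers, $Q$ acquires extra arrows and loops encoded by the off-diagonal of $C$ and by $D$; writing the adjacency matrix as the corresponding perturbation and unraveling the resulting Chebyshev-type three-term recurrence for the characteristic polynomial yields the remaining closed-form expressions (the $+1$ shifts in the second table being exactly the contribution of the added loops). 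The main obstacle is the upper bound in part~(2): controlling Ext-quivers of \emph{all} semibricks of $\Pi(C,D)$ requires the lattice/Weyl-group parametrization together with precise knowledge of $\Ext^{1}$ between bricks, and will constitute the bulk of the argument; by contrast, the spectral-radius computations, while case-by-case, are routine.
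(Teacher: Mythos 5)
Your overall strategy — lower bound from the semibrick of simples, upper bound by controlling every semibrick's Ext-quiver, then spectral-radius computations by Coxeter-number and Chebyshev-recurrence arguments — is the same shape as the paper's, and the spectral computations you outline agree with Proposition \ref{prop:spect-rad-dynkin}. The gap is that in both parts the decisive upper-bound step is named but not supplied. For cyclic Nakayama algebras you propose to "check" that the non-vanishing of $\Ext^1_A$ respects the cyclic order, concluding every semibrick Ext-quiver is "a disjoint union of oriented cycles." That conclusion is already imprecise (the correct statement is a disjoint union of linear quivers $\mathsf{A}^{\leftarrow}_m$ \emph{and} cyclic quivers $\tilde{\mathsf{A}}^{\leftarrow}_m$, as in Proposition \ref{prop:nakayama-key}; otherwise every nonempty semibrick would force spectral radius exactly $1$), and more seriously, ruling out out-degree $\geq 2$ in $Q_{\mathcal S}$ by direct inspection of uniserial bricks is not a routine check. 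The paper instead runs an induction on the number of simples via $\tau$-tilting reduction, with Lemma \ref{lem:end-bon-com} showing the Jasso reduction $\End_A(\widetilde M)/[M]$ is again Nakayama; without this (or a worked-out substitute) your argument does not close.

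For $\Pi(C,D)$ the same issue recurs. The claim that $Q_{\mathcal S}$ is a subquiver of $Q$ for every semibrick $\mathcal S$ is exactly Proposition \ref{prop:mur-2424}, asserting that every wide subcategory of $\mod\Pi^{\mathrm{op}}$ is equivalent to $\mod(\Pi^{\mathrm{op}}/\langle e\rangle)$ for some idempotent $e$. The Mizuno/Fu--Geng parametrization of $\sttilt(\Pi^{\mathrm{op}})$ by the Weyl group is only the starting point: the paper also needs Proposition \ref{prop:kase35min}, the join formula $\vee\{ws_j : s_j\in J\}=w\,w_0(J)$ from Lemma \ref{lem:coxeter-sub}, and Murakami's identification $I_{w_0(J)}^{\perp}\simeq\Fac I_w\cap I_{w'}^{\perp}$ to land the $\tau$-tilting reduction on an idempotent quotient of $\Pi^{\mathrm{op}}$ rather than some other algebra. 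Your phrase "every arrow in $Q_{\mathcal S}$ is inherited from an arrow of $Q$" describes the desired conclusion, not a proof of it; as written this step is open.
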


\subsection*{Notation and convention}
Throughout this paper, $\Bbbk$ is an algebraically closed field and $\kD:=\Hom_{\Bbbk}(-,\Bbbk)$.
By an algebra and a module, we mean a basic finite-dimensional $\Bbbk$-algebra (i.e., it is isomorphic to a bound quiver $\Bbbk$-algebra) and a finitely generated right module, respectively, unless otherwise stated.
Let $A$ be an algebra and fix a complete set $\{ e_{i}\mid i\in \Lambda\}$ of primitive orthogonal idempotents of $A$.
Let $P(i):=e_{i}A$ be an indecomposable projective $A$-module and $S(i):=\top P(i)$ a simple $A$-module.

By a quiver, we mean a finite quiver.
In this paper, we formally consider quivers whose vertex set is the empty set.
For a quiver $Q$, let $Q_{0}$ be the set of vertices and $Q_{1}$ the set of arrows. 
Let $Q^{\circ}$ denote the subquiver of $Q$ obtained by removing all loops. 
For $x\in Q_{0}$, an element $y\in Q_{0}$ is called a \emph{direct successor} (respectively, \emph{direct predecessor}) of $x$ if there exists an arrow $x\rightarrow y$ (respectively, $y\rightarrow x$) in $Q$.
For a partially ordered set (poset for short), an element is called a \emph{direct successor} (respectively, \emph{direct predecessor}) if it is a direct successor (respectively, direct predecessor) in the Hasse quiver.

\section{Preliminaries}

In this section, we quickly review the Frobenius--Perron dimension of an algebra and $\tau$-tilting theory.

\subsection{Frobenius--Perron dimension of an algebra}
In this subsection, we recall the definition and basic properties of the Frobenius--Perron dimensions of algebras. To define Frobenius--Perron dimension, we need the notion of semibricks, which plays an important role in both Frobenius--Perron dimension and $\tau$-tilting theory.

\begin{definition}
Let $A$ be an algebra.
\begin{itemize}
\item[(1)] An $A$-module $S$ is called a \emph{brick} if $\End_{A}(S)$ is a division ring.
Let $\brick(A)$ denote the set of isomorphism classes of bricks in $\mod A$. 
\item[(2)] A subset $\mathcal{S}\subseteq \brick(A)$ is called a \emph{semibrick} if $\Hom_{A}(S,S')=0$ whenever $S\not\cong S'\in \mathcal{S}$.
Let $\sbrick(A)$ denote the set of semibricks in $\mod A$. 
\end{itemize}
\end{definition}

The emptyset $\emptyset$ is viewed as a semibrick.
By Schur's lemma, a set of non-isomorphic simple $A$-modules is clearly a semibrick.
Note that a semibrick is not necessarily a finite set (for example, the path algebra of the Kronecker quiver).
A semibrick is said to be \emph{finite} if it contains only finitely many bricks. 

It is shown in \cite{R76} that semibricks can be realized as simple objects in wide subcategories of $\mod A$.
A full subcategory $\mathcal{W}$ of $\mod A$ is called a \emph{wide subcategory} of $\mod A$ if it is closed under kernels, cokernels, and extensions in $\mod A$. We write $\wide(A)$ for the set of wide subcategories of $\mod A$.
Note that wide subcategories are exact abelian subcategory of $\mod A$. 
For a wide subcategory $\mathcal{W}$ of $\mod A$, let $\simp(\mathcal{W})$ be the set of simple objects of $\mathcal{W}$. In particular, $\simp(A):=\simp(\mod A)$ coincides with the set of simple $A$-modules (up to isomorphisms).

\begin{proposition}[\cite{R76}]\label{prop:ringel}
There exists a bijection 
\begin{align}
\wide(A) \rightarrow \sbrick(A) \notag
\end{align}
given by $\mathcal{W}\mapsto \simp(\mathcal{W})$.
\end{proposition}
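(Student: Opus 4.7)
The plan is to construct an explicit inverse to $\mathcal{W} \mapsto \simp(\mathcal{W})$ by sending a semibrick $\mathcal{S}$ to the subcategory $\mathcal{F}(\mathcal{S})$ of modules admitting a finite filtration with successive quotients in $\mathcal{S}$, and then to check that the two assignments are mutually inverse.

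First, I would verify that $\simp(\mathcal{W})$ is a semibrick for any wide subcategory $\mathcal{W}$. Since $\mathcal{W}$ is closed under kernels and cokernels in $\mod A$, it inherits the structure of an abelian category in which $\Hom$-spaces coincide with those in $\mod A$. Schur's lemma inside $\mathcal{W}$ then forces $\End_{A}(S)$ to be a division ring for every $S\in\simp(\mathcal{W})$, and a nonzero morphism between two non-isomorphic simples of $\mathcal{W}$ would have kernel and cokernel in $\mathcal{W}$, contradicting simplicity.

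Second, I would show that $\mathcal{F}(\mathcal{S})$ is a wide subcategory for every semibrick $\mathcal{S}$. Closure under extensions is immediate by concatenating $\mathcal{S}$-filtrations. Closure under kernels and cokernels is the heart of the argument, and I would proceed by induction on the sum of the filtration lengths of the source and target of a morphism $f\colon M\to N$ in $\mathcal{F}(\mathcal{S})$. The base and inductive steps reduce to the following kind of assertion: if $0\to S\to M'\to M''\to 0$ is exact with $S\in\mathcal{S}$ and $M',M''\in\mathcal{F}(\mathcal{S})$, then a map from $M'$ into another $\mathcal{S}$-filtered module either factors through $M''$ or pulls back a copy of $S$ in the target. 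The semibrick hypothesis $\Hom_{A}(S,S')=0$ for non-isomorphic bricks in $\mathcal{S}$ (together with the division-ring property of $\End_A(S)$) is what lets these filtrations be rearranged compatibly with $f$, producing $\ker f$ and $\coker f$ in $\mathcal{F}(\mathcal{S})$.

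Third, I would check that the two composites are the identity. For $\simp(\mathcal{F}(\mathcal{S}))=\mathcal{S}$: each $S\in\mathcal{S}$ is simple in $\mathcal{F}(\mathcal{S})$ because any proper subobject in $\mathcal{F}(\mathcal{S})$ would have to be an $\mathcal{S}$-filtered submodule of a brick, which forces it to be $0$ or $S$; conversely, a simple object of $\mathcal{F}(\mathcal{S})$ has an $\mathcal{S}$-filtration of length one. For $\mathcal{F}(\simp(\mathcal{W}))=\mathcal{W}$: every object of $\mathcal{W}$ is finite-dimensional over $\Bbbk$ and hence has finite length as an object of the abelian category $\mathcal{W}$, so admits a composition series with factors in $\simp(\mathcal{W})$, yielding $\mathcal{W}\subseteq\mathcal{F}(\simp(\mathcal{W}))$; the reverse inclusion uses closure of $\mathcal{W}$ under extensions.

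The main obstacle I expect is the closure of $\mathcal{F}(\mathcal{S})$ under kernels and cokernels, since this is where the $\Hom$-orthogonality of a semibrick plays its essential role and where a routine extension-closure argument no longer suffices; the rest of the proof is formal book-keeping once this step is in place.
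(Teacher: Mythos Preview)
The paper does not give its own proof of this proposition: it is stated with a citation to Ringel \cite{R76} and used as a black box throughout. Your proposal is the standard argument (essentially Ringel's), constructing the inverse $\mathcal{S}\mapsto\mathcal{F}(\mathcal{S})$ via filtrations and checking that $\mathcal{F}(\mathcal{S})$ is wide; the outline is correct and there is nothing in the paper to compare it against.
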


For a finite semibrick $\mathcal{S}$, we define a quiver $Q_{\mathcal{S}}$, called an \emph{Ext-quiver}, as follows:
the set of vertices is $\mathcal{S}$, and for $S,S'\in \mathcal{S}$, we draw $\dim_{\Bbbk}\Ext_{A}^{1}(S,S')$ arrows from $S$ to $S'$.
Clearly, the Gabriel quiver of $A$ is isomorphic to the Ext-quiver $Q_{\simp(A)}$ of $\simp(A)$.

The Frobenius--Perron dimension of an algebra $A$ is defined to be the supremum of the set of the spectral radii of the adjacent matrices of the Ext-quivers of all finite semibricks in $\mod A$.
We recall the definition of the spectral radius of the adjacent matrix of a quiver.
Let $Q$ be a quiver with non-empty vertex set and let $M(Q)$ be its adjacent matrix, that is $M(Q):=(m_{i,j})_{1\leq i,j\leq |Q_{0}|}$, where $m_{i,j}$ is the number of arrows from $i$ to $j$.
The \emph{spectral radius} $\rho(Q)$ of $Q$ is defined to be 
\begin{align}
\rho(Q):=\max\{ |r_{1}|, |r_{2}|,\ldots, |r_{n}|\} \in \mathbb{R},\notag
\end{align}
where $\{ r_{1}, r_{2},\ldots, r_{n}\}$ is the complete list of the eigenvalues of $M(Q)$.
For a quiver whose vertex set is the empty set,  the spectral radius is defined to be zero.

The following lemma is an elementary result of the spectral radius of a quiver.

\begin{lemma}[{\cite[Lemma 1.7 and Theorem 1.8]{CGWZ19}}]\label{lem:spectrad-subquiver}
Let $Q$ be a quiver.
Then the following statements hold.
\begin{itemize}
\item[(1)] If $Q'$ is a subquiver (not necessarily full) of $Q$, then we have $\rho(Q')\leq \rho(Q)$.
\item[(2)] $Q$ is acyclic if and only if $\rho(Q)=0$.
\item[(3)] If $Q$ admits connected components $Q^{1}, Q^{2}, \ldots, Q^{\ell}$, then we have 
\begin{align}
\rho(Q)=\max\{ \rho(Q^{1}), \rho(Q^{2}), \ldots, \rho(Q^{\ell})\}.\notag
\end{align}
\end{itemize}
\end{lemma}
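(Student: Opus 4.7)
The plan is to reduce all three statements to standard linear-algebra facts about the adjacency matrix, viewed as a nonnegative real matrix.

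First, I would fix notation: for any quiver $P$, let $M(P)$ be its $|P_0|\times |P_0|$ adjacency matrix (zero matrix when $P_0=\emptyset$, giving $\rho(P)=0$ consistently with the definition). Since $M(P)$ has nonnegative integer entries, the Perron--Frobenius theory applies. Recall the two classical inputs I intend to use: (a) if $0\le A\le B$ entrywise for square nonnegative matrices of the same size, then $\rho(A)\le \rho(B)$; (b) padding a nonnegative matrix with zero rows and columns adjoins only zero eigenvalues, hence leaves the spectral radius unchanged.

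For (1), I would first extend $M(Q')$ by zero rows and columns so that its rows and columns are indexed by $Q_0$; this does not change $\rho(Q')$ by (b). Because $Q'$ is a subquiver of $Q$ (not necessarily full), every arrow of $Q'$ is also an arrow of $Q$, so the extended matrix is dominated entrywise by $M(Q)$. Applying (a) yields $\rho(Q')\le\rho(Q)$.

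For (2), I would argue both directions via powers of $M(Q)$. If $Q$ is acyclic, then the vertices can be topologically ordered so that $M(Q)$ becomes strictly upper triangular, hence nilpotent, hence all eigenvalues are $0$ and $\rho(Q)=0$. Conversely, if $Q$ contains an oriented cycle of length $k$, then the diagonal entry of $M(Q)^k$ at any vertex on the cycle is at least $1$, so $\mathrm{tr}(M(Q)^k)\ge 1$; therefore not all eigenvalues are zero and $\rho(Q)>0$.

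For (3), I would reorder the vertex set by connected components, which transforms $M(Q)$ into the block diagonal matrix $\mathrm{diag}(M(Q^1),\ldots,M(Q^\ell))$. The spectrum of a block diagonal matrix is the union of the spectra of the blocks, so
\[
\rho(Q)=\max_{1\le i\le \ell}\rho(Q^i).
\]
The only mild subtlety I foresee is in (1), where one must be careful that a subquiver has fewer vertices than $Q$ in general; the zero-padding step in (b) is precisely what handles this, so no step is really an obstacle. I would finish by noting that all three statements are immediate once the adjacency matrix is viewed through the Perron--Frobenius lens.
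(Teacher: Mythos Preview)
Your argument is correct and self-contained. The paper itself does not prove this lemma; it simply quotes it from \cite{CGWZ19} (Lemma~1.7 and Theorem~1.8) without reproducing any argument. Your approach via Perron--Frobenius monotonicity for (1), nilpotency/trace for (2), and block-diagonalization for (3) is exactly the standard route and is what the cited reference does in essence. One minor remark on presentation: in (1) you only need that nonnegative matrices satisfy $0\le A\le B\Rightarrow\rho(A)\le\rho(B)$, which is the elementary part of Perron--Frobenius (it follows already from $\rho(A)=\lim_{n\to\infty}\lVert A^n\rVert^{1/n}$ and $A^n\le B^n$ entrywise); you do not need the full theorem, so you might phrase it that way to keep the proof as lightweight as possible.
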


Now, we define the Frobenius--Perron dimension of an algebra. 

\begin{definition}
The Frobenius--Perron dimension of $A$ is defined to be 
\begin{align}
\FPdim(A):=\sup\{ \rho(Q_{\mathcal{S}})\mid \mathcal{S}\in \sbrick(A):\text{finite}\}.\notag
\end{align}
\end{definition}

It is known that Frobenius--Perron dimension can be a non-integer (e.g., see Theorem \ref{mainthm:FPdim_NA_PPA}(2)).

\begin{remark}
There exists no algebra $A$ such that $0 < \FPdim(A) < 1$.
Indeed, if such an algebra $A$ exists, then each semibrick $\mathcal{S}$ satisfies  $\rho(Q_{\mathcal{S}})<1$.
Thus the absolute values of the eigenvalues of the square matrix $M:=M(Q_{\mathcal{S}})$ are less than one.
This implies that $\lim\limits_{n\to\infty} M^{n}=0$.
Since $M$ is an integer matrix, it is nilpotent.
It is known that the eigenvalues of a nilpotent matrix are always zero.
Thus we have $\rho(Q_{\mathcal{S}})=0$, and hence $\FPdim(A)=0$, a contradiction.
\end{remark}

We give a toy example of the Frobenius--Perron dimension of an algebra.

\begin{example}
Let $A=\Bbbk (1\to 2)$ be the path algebra.
Then all indecomposable $A$-modules (up to isomorphisms) are given by $P(1)$, $P(2)=S(2)$, and $S(1)$.
Furthermore, we have
\begin{align}
&\sbrick(A)=\{\{S(1),S(2)\},\{P(1)\},\{S(1)\},\{S(2)\},\emptyset\},\notag\\
&\{Q_{\mathcal{S}}\mid \mathcal{S}\in \sbrick(A)\setminus\{\emptyset\}\} =\{ \bullet\to \bullet ,\ \bullet\}.\notag
\end{align}
Since it is easily checked $\rho(\bullet\rightarrow\bullet)=\rho(\bullet)=0$, we obtain $\FPdim(A)=0$.
\end{example}

As will be seen in Example \ref{ex:fpdimsp}, the Frobenius--Perron dimension of an algebra is not necessarily equal to the spectral radius of its Gabriel quiver.

The Frobenius--Perron dimension of factor algebras of an algebra is bounded above by that of the original algebra.

\begin{lemma}[{\cite[Proposition 3.2]{CC23}}]\label{lem:fpdim-facalg}
Assume that $B$ is a factor algebra of $A$.
Then we have $\FPdim(B)\leq \FPdim(A)$.
\end{lemma}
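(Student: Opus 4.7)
The plan is to show that every finite semibrick in $\mod B$ gives rise to an Ext-quiver that is a subquiver (on the same vertex set, but possibly with fewer arrows) of the corresponding Ext-quiver computed in $\mod A$, and then to invoke Lemma \ref{lem:spectrad-subquiver}(1) together with the definition of $\FPdim$.

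First I would use the surjection $\pi\colon A\twoheadrightarrow B$ to view $\mod B$ as a full subcategory of $\mod A$. Under this identification, for any two $B$-modules $M$ and $N$, the $\Bbbk$-spaces $\Hom_{B}(M,N)$ and $\Hom_{A}(M,N)$ coincide, because $A$-linearity automatically forces $\ker\pi$ to act as zero. In particular, a (semi)brick in $\mod B$ is again a (semi)brick in $\mod A$, giving an inclusion $\sbrick(B)\subseteq \sbrick(A)$, and finite semibricks remain finite.

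Next I would establish the key comparison $\dim_{\Bbbk}\Ext^{1}_{B}(M,N)\leq \dim_{\Bbbk}\Ext^{1}_{A}(M,N)$ for $M,N\in\mod B$. Any short exact sequence $0\to N\to E\to M\to 0$ in $\mod B$ is automatically such a sequence in $\mod A$, and since the outer terms are $B$-modules, any $A$-linear equivalence of extensions is automatically $B$-linear. Hence the canonical map $\Ext^{1}_{B}(M,N)\to \Ext^{1}_{A}(M,N)$ is injective. Consequently, for every finite semibrick $\mathcal{S}\in\sbrick(B)$, the Ext-quiver $Q_{\mathcal{S}}^{B}$ has the same vertex set as $Q_{\mathcal{S}}^{A}$, with at most as many arrows between any two vertices, so $Q_{\mathcal{S}}^{B}$ is a subquiver of $Q_{\mathcal{S}}^{A}$ in the sense of Lemma \ref{lem:spectrad-subquiver}.

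Finally, applying Lemma \ref{lem:spectrad-subquiver}(1) yields $\rho(Q_{\mathcal{S}}^{B})\leq \rho(Q_{\mathcal{S}}^{A})$ for every finite $\mathcal{S}\in \sbrick(B)$, and taking the supremum together with $\sbrick(B)\subseteq \sbrick(A)$ gives
\begin{align}
\FPdim(B)=\sup_{\mathcal{S}\in\sbrick(B)}\rho(Q_{\mathcal{S}}^{B})\leq \sup_{\mathcal{S}\in\sbrick(B)}\rho(Q_{\mathcal{S}}^{A})\leq \sup_{\mathcal{T}\in\sbrick(A)}\rho(Q_{\mathcal{T}}^{A})=\FPdim(A).\notag
\end{align}
The only slightly delicate point is the injectivity of $\Ext^{1}_{B}\to \Ext^{1}_{A}$; everything else is formal from Proposition \ref{prop:ringel}, Lemma \ref{lem:spectrad-subquiver}, and the definition of $\FPdim$.
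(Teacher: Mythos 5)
Your proof is correct and follows essentially the same route as the paper: view $\mod B$ as a full subcategory of $\mod A$, observe $\sbrick(B)\subseteq\sbrick(A)$ and that the natural map $\Ext^1_B\to\Ext^1_A$ is injective (so each Ext-quiver over $B$ is a subquiver of the one over $A$), then invoke Lemma \ref{lem:spectrad-subquiver}(1). You spell out the injectivity argument more explicitly than the paper does, but the underlying idea is identical.
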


For the convenience of the readers, we give a proof.

\begin{proof}
Since $B$ is a factor algebra of $A$, we can regard $\mod B$ as a full subcategory of  $\mod A$ and have a natural inclusion $\Ext_{B}^{1}(X,Y)\rightarrow \Ext_{A}^{1}(X,Y)$ for all $X,Y\in \mod B$. Then $\sbrick(B)$ is a subset of $\sbrick(A)$.
For each $\mathcal{S}\in \sbrick(B)$, there exists $\mathcal{S}'\in \sbrick(A)$ such that the Ext-quiver $Q_{\mathcal{S}}$ is a subquiver of $Q_{\mathcal{S}'}$.
By Lemma \ref{lem:spectrad-subquiver}, we have $\rho(Q_{\mathcal{S}})\leq \rho(Q_{\mathcal{S}'})$.
This implies that $\FPdim(B)\leq \FPdim(A)$ holds.
\end{proof}

\subsection{$\tau$-tilting theory}
In this subsection, we recall basic properties for $\tau$-tilting theory.
Let $A$ be an algebra. For an $A$-module $M$, we take a minimal projective presentation
\begin{align}
P_{1}\xrightarrow{\rho} P_{0}\rightarrow M\rightarrow 0.\notag
\end{align}
We define the \emph{Auslander--Reiten translation} $\tau M$ by the exact sequence
\begin{align}
0\rightarrow \tau M \rightarrow \nu P_{1} \xrightarrow{\nu\rho} \nu P_{0}, \notag
\end{align}
where $\nu:=\kD\Hom_{A}(-,A)$ is a Nakayama functor.
We call $M$ a \emph{$\tau$-rigid module} if it satisfies $\Hom_{A}(M,\tau M)=0$.
The following notions are basic in this paper.

\begin{definition}
Let $M$ be an $A$-module and $P$ a projective $A$-module.
The pair $(M,P)$ is called a \emph{$\tau$-rigid pair} if $M$ is $\tau$-rigid and $\Hom_{A}(P,M)=0$. A $\tau$-rigid pair $(M,P)$ is called a \emph{$\tau$-tilting pair} if $|A|=|M|+|P|$ holds, where $|X|$ denotes the number of non-isomorphic indecomposable direct summands of an $A$-module $X$.
\end{definition}

Let $\trigidp(A)$ denote the set of isomorphism classes of basic $\tau$-rigid pairs for $A$, and let $\sttilt(A)$ denote the set of isomorphism classes of basic $\tau$-tilting pairs for $A$, where a pair $(M,P)$ is said to be \emph{basic} if both $M$ and $P$ are basic and two pairs $(M,P), (M',P')$ are called \emph{isomorphic} if both $M\cong M'$ and $P\cong P'$ hold.  
For $(M,P),(M',P')\in \trigidp(A)$, we write $(M,P)\geq (M',P')$ if $\Hom_{A}(M',\tau M)=0$ and $\add P\subseteq \add P'$ hold.
By \cite[Lemma 2.25]{AIR14}, the relation $\geq$ is a partial order on $\sttilt(A)$.
Define a subset $\sttilt_{X}(A)$ of $\sttilt(A)$ as
\begin{align}
\sttilt_{X}(A):=\{ (M,P)\in \sttilt(A) \mid X\in\add(M\oplus P)\}.\notag
\end{align}

For a basic $\tau$-rigid pair $(M,P)$, there exist a maximum element $(M,P)^{+}=(M^{+},P)$, called a \emph{Bongartz completion} of $(M,P)$, and a minimum element $(M,P)^{-}=(M^{-},P^{-})$, called a \emph{co-Bongartz completion} of $(M,P)$, in $\sttilt_{(M,P)}(A)$. 
Consider an interval in $\sttilt(A)$ 
\begin{align}
\mathsf{Int}(M,P):=\{ X \in \sttilt(A) \mid (M,P)^{-}\leq X\leq (M,P)^{+}\}. \notag
\end{align}
By \cite[Theorem 4.4]{DIRRT23}, we have $\mathsf{Int}(M,P)=\sttilt_{(M,P)}(A)$.
Furthermore, it is realized as the set of isomorphism classes of $\tau$-tilting pairs for a certain algebra.

\begin{proposition}[{\cite[Theorem 3.16]{J15}}]\label{prop:jred}
Let $(M,P)$ be a $\tau$-rigid pair for $A$ and $(M^{+},P)$ its Bongartz completion.
Then we have a poset isomorphism 
\begin{align}
\sttilt_{(M,P)}(A)\cong \sttilt((\End_{A}(M^{+})/[M]))\notag
\end{align}
where $[M]$ is the idempotent of $\End_{A}(M^{+})$ corresponding to $\Hom_{A}(M^{+},M)$.
The algebra $A(M,P):=\End_{A}((M^{+})/[M])$ is called a \emph{$\tau$-tilting reduction} with respect to $(M,P)$.
\end{proposition}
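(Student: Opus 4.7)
The plan is to follow Jasso's original approach, which establishes the isomorphism through the intermediate notion of a $\tau$-perpendicular category. Set $T := M^{+}$, $C := \End_{A}(T)$, let $e \in C$ denote the idempotent $[M]$ corresponding to $\Hom_{A}(M^{+},M)$, and write $\bar{A} := C/CeC$. The key object is the full subcategory
\[
\mathcal{W}(M,P) := \{X \in \mod A \mid \Hom_{A}(M,X) = 0 = \Hom_{A}(X,\tau M),\ \Hom_{A}(P,X) = 0\}
\]
of $\mod A$. The first step is to verify that $\mathcal{W}(M,P)$ is a wide subcategory of $\mod A$ and that the functor $F := \Hom_{A}(T,-)$ restricts to an equivalence $\mathcal{W}(M,P) \xrightarrow{\sim} \mod \bar{A}$. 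Wideness reduces to the standard fact that the $\tau$-rigidity of $M$ makes $\Hom_{A}(M,-)$ and $\Hom_{A}(-,\tau M)$ behave compatibly with kernels, cokernels, and extensions; the equivalence follows from the tilting-type fact that $F$ is fully faithful on $\Fac T$ (because $(T,P)$ is $\tau$-tilting) and from the observation that $\mathcal{W}(M,P) \subseteq \Fac T$ is cut out precisely by the condition that the $C$-module image is annihilated by $e$, i.e.\ lies in $\mod \bar{A}$.

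Next, I would set up the bijection $\Phi\colon \sttilt(\bar{A}) \to \sttilt_{(M,P)}(A)$. Given $(\bar N,\bar Q) \in \sttilt(\bar A)$, pull back along $F$ to a $\tau$-rigid pair $(N',Q')$ in $\mathcal{W}(M,P) \subseteq \mod A$, and define
\[
\Phi(\bar N,\bar Q) := (M \oplus N',\ P \oplus Q').
\]
Conversely, for $(N,Q) \in \sttilt_{(M,P)}(A)$, decompose $N = M \oplus N'$ and $Q = P \oplus Q'$ with $N',Q'$ having no summand in $\add M$, $\add P$; the $\tau$-rigidity of $(N,Q)$ together with $\Hom_{A}(Q,N)=0$ forces $N',Q' \in \mathcal{W}(M,P)$, so one may send $(N,Q) \mapsto (FN',FQ')$. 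The counts of summands match since one has $|A| = |M| + |P| + |\bar A|$, and any $\tau$-tilting pair for $\bar A$ has exactly $|\bar A|$ indecomposable summands.

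Finally, to establish that $\Phi$ is a poset isomorphism, I would use the characterization of the partial order on $\sttilt(A)$ via $\Fac$: namely $(N,Q) \geq (N',Q')$ is equivalent to $\Fac N \supseteq \Fac N'$ together with $\add P \subseteq \add P'$. Since $F$ is exact on $\mathcal{W}(M,P)$ and commutes with $\Fac$ in a suitable sense, the order transfers cleanly. The main obstacle, and the technical heart of the argument, is the transfer of $\tau$-rigidity itself: for $X,Y \in \mathcal{W}(M,P)$ one must prove
\[
\Hom_{A}(Y,\tau_{A} X) = 0 \iff \Hom_{\bar A}(FY,\tau_{\bar A} FX) = 0.
\]
This requires choosing a minimal projective presentation of $X$ in $\mod A$, applying $F$ and the Nakayama functors on both sides, and tracking the resulting kernels. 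The delicate point is that $F \circ \nu_{A}$ and $\nu_{\bar A} \circ F$ do not agree on the nose; the rescue is precisely the vanishing condition $\Hom_{A}(M,X) = 0$ encoded in $\mathcal{W}(M,P)$, which kills the discrepancy between the two Auslander--Reiten translates and yields the clean equivalence of $\tau$-rigidity on both sides.
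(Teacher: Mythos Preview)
The paper does not supply its own proof of this proposition; it is quoted verbatim from Jasso's work \cite[Theorem 3.16]{J15} and used as a black box. Your outline is a faithful sketch of Jasso's original strategy, passing through the $\tau$-perpendicular category $\mathcal{W}(M,P)$ and the equivalence $\Hom_{A}(T,-)\colon \mathcal{W}(M,P)\to\mod\bar A$.

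One place where your sketch is loose: the handling of the projective part of the pair. You write that decomposing $(N,Q)=(M\oplus N',\,P\oplus Q')$ ``forces $N',Q'\in\mathcal{W}(M,P)$'', but $Q'$ is a projective $A$-module and has no reason to lie in $\mathcal{W}(M,P)$ (indeed, $\Hom_{A}(M,Q')$ is typically nonzero). In Jasso's actual argument the projective $\bar A$-modules correspond, via $F$, to direct summands of the Bongartz complement $M^{+}/M$ inside $\add T$, not to projective $A$-modules; the passage from $Q'$ on the $A$-side to a projective on the $\bar A$-side therefore requires an extra identification rather than a direct application of $F$. This is a genuine bookkeeping subtlety that your sketch elides, though the overall architecture is correct.
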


An algebra $A$ is said to be \emph{$\tau$-tilting finite} if $\sttilt(A)$ is a finite set.
It is known that $\tau$-tilting finite algebras have various nice properties.

\begin{remark}\label{rem:tfin-quiver-nomult}
Let $A$ be a $\tau$-tilting finite algebra.
\begin{itemize}
\item[(1)] It is shown in \cite{As20} that there exists a bijection $\sttilt(A)\rightarrow \sbrick(A)$ given by $(M,P)\mapsto M/\rad_{\End_{A}(M)}M$. In particular, all semibricks are finite.
Note that all $\tau$-tilting pairs for $A$ can be obtained by mutations from the $\tau$-tilting pair $(A,0)$.
\item[(2)] The Gabriel quiver of $A$ has no multiple arrows.
Indeed, if it has a multiple arrows, then there exists a factor algebra such that it is isomorphic to a Kronecker algebra, which is not $\tau$-tilting finite.
By \cite[Corollary 1.9]{DIRRT23}, the class of $\tau$-tilting finite algebras is closed under taking factor algebras.
This implies that $A$ is not $\tau$-tilting finite, a contradiction.
\end{itemize}
\end{remark}

\begin{proposition}
If $A$ is $\tau$-tilting finite, then $\FPdim(A)$ is finite.
\end{proposition}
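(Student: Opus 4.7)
The plan is to reduce the statement to finiteness of the set of semibricks plus finiteness of each Ext-quiver. The key ingredient is Remark \ref{rem:tfin-quiver-nomult}(1), which, via the result of Asai, gives a bijection $\sttilt(A)\to\sbrick(A)$ when $A$ is $\tau$-tilting finite, and in particular tells us that every semibrick is a finite set. Combined with the hypothesis $|\sttilt(A)|<\infty$, this yields $|\sbrick(A)|<\infty$, so the supremum defining $\FPdim(A)$ is actually a supremum over a \emph{finite} set.

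First I would invoke Remark \ref{rem:tfin-quiver-nomult}(1) to conclude that $\sbrick(A)$ is a finite set and that each $\mathcal{S}\in\sbrick(A)$ is itself finite. Next I would verify that for each such $\mathcal{S}$ the Ext-quiver $Q_{\mathcal{S}}$ is a genuine finite quiver: its vertex set is $\mathcal{S}$, which has finitely many elements, and the number of arrows from $S$ to $S'$ is $\dim_{\Bbbk}\Ext_{A}^{1}(S,S')$, which is finite because $A$ is a finite-dimensional $\Bbbk$-algebra and $S,S'$ are finitely generated. Therefore the adjacency matrix $M(Q_{\mathcal{S}})$ is a finite integer matrix and its spectral radius $\rho(Q_{\mathcal{S}})$ is a finite nonnegative real number.

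Finally, $\FPdim(A)=\sup\{\rho(Q_{\mathcal{S}})\mid \mathcal{S}\in\sbrick(A)\}$ is the supremum of finitely many finite real numbers, so it is a finite (in fact attained) real number. This completes the argument.

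The proof is essentially a bookkeeping assembly of facts already stated in the excerpt, so I do not anticipate a real obstacle; the only point that requires a moment's thought is the finiteness of the individual Ext-quivers, which would be immediate once one notes that every hom/ext space between finitely generated modules over a finite-dimensional algebra is finite-dimensional. No new machinery beyond Asai's bijection and the finite-dimensionality of $A$ is needed.
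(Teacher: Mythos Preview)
Your argument is correct and follows essentially the same route as the paper: the paper's proof simply cites Remark~\ref{rem:tfin-quiver-nomult}(1) to conclude that $\sbrick(A)$ is finite and then declares the assertion proved. Your version is just a more explicit unpacking of the same idea, spelling out why each individual $\rho(Q_{\mathcal{S}})$ is finite before taking the supremum.
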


\begin{proof}
By Remark \ref{rem:tfin-quiver-nomult}(1), the set $\sbrick(A)$ is finite.
Thus we have the assertion.
\end{proof}

By \cite[Theorem 1.2]{IRTT15}, the poset $\sttilt(A)$ of a $\tau$-tilting finite algebra $A$ forms a lattice. 
Recall the definition of lattices. 
Let $(\mathbb{P},\leq)$ be a poset.
Let $x,y$ be elements in $P$.
If $\{ z\in \mathbb{P}\mid x,y\leq z\}$ admits a minimum element $x\vee y$, then it is called a \emph{join} of $x$ and $y$. Dually, we define a \emph{meet} $x\wedge y$ of $x$ and $y$.
We call $(\mathbb{P},\leq)$ a \emph{lattice} if for every $x,y\in \mathbb{P}$, the join $x\vee y$ and the meet $x\wedge y$ both exist. 
Note that, for a non-empty finite subset $\{ x_{1},x_{2}, \ldots, x_{n}\}$ of $\mathbb{P}$, the set
$\{ z\in \mathbb{P}\mid x_{1},x_{2}, \ldots, x_{n} \leq z\}$ admits a minimum element $\vee\{x_{1}, x_{2}, \ldots, x_{n}\}$. Dually, we define $\wedge\{x_{1}, x_{2}, \ldots, x_{n}\}$.
For an element $x\in \mathbb{P}$, let $\mathrm{dp}(x)$ denote the set of all direct predecessors of $x$ and let $\mathrm{ds}(x)$ denote the set of all direct successors of $x$ in $\mathbb{P}$.

The following proposition plays an important role in this paper.

\begin{proposition}[{\cite[Corollary 3.5(3)]{Ka24}}]\label{prop:kase35min}
Assume that $A$ is $\tau$-tilting finite. Fix a $\tau$-tilting pair $X_{0}$.
Then the following statements hold.
\begin{itemize}
\item[(1)] Consider a subset $\{ X_{1}, X_{2},\ldots, X_{l}\}$ of $\mathrm{dp}(X_{0})$.
Let $X$ be a maximal common direct summand of $X_{0}, X_{1}, \ldots, X_{l}$. 
Then the following statements hold.
\begin{itemize}
\item[(a)] $X$ is a $\tau$-rigid pair for $A$.
\item[(b)] $X_{0}$ is the co-Bongartz completion of $X$.
\item[(c)] $\vee\{ X_{0}, X_{1},\ldots, X_{l}\}$ is the Bongartz completion of $X$.
\end{itemize}
In particular, $X_{1}, \ldots, X_{l}$ are all direct predecessors of $X_{0}$ in $\sttilt_{X}(A)$.
\item[(2)] Consider a subset $\{ X_{1}, X_{2},\ldots, X_{l}\}$ of $\mathrm{ds}(X_{0})$.
Let $X$ be a maximal common direct summand of $X_{0}, X_{1}, \ldots, X_{l}$.
Then the following statements hold.
\begin{itemize}
\item[(a)] $X$ is a $\tau$-rigid pair for $A$.
\item[(b)] $X_{0}$ is the Bongartz completion of $X$.
\item[(c)] $\wedge\{ X_{0}, X_{1},\ldots, X_{l}\}$ is the co-Bongartz completion of $X$.
\end{itemize}
In particular, $X_{1}, \ldots, X_{l}$ are all direct successors of $X_{0}$ in $\sttilt_{X}(A)$.
\end{itemize}
\end{proposition}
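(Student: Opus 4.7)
The plan is to reduce everything via the $\tau$-tilting reduction of Proposition \ref{prop:jred}, so it suffices to prove (1); part (2) follows by the dual argument, exchanging the roles of Bongartz and co-Bongartz completions. To begin, I would unpack $X$: since each $X_i$ covers $X_0$ via a single mutation, it differs from $X_0$ in exactly one indecomposable summand $N_i$, and distinct direct predecessors correspond to distinct $N_i$'s. Hence $X$ is obtained from $X_0$ by deleting $N_1,\ldots,N_l$, giving $|X|=|A|-l$. Statement (a) is then immediate, as $X$ is a direct summand of the $\tau$-rigid pair $X_0$.

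For (b), I would apply Proposition \ref{prop:jred} to obtain a lattice isomorphism $\sttilt_X(A)\cong\sttilt(A(X))$ with $|A(X)|=l$, and denote the image of each $X_i$ by $Y_i$. All $X_i$ contain $X$ as a summand and therefore lie in the interval $\sttilt_X(A)$; since an interval inherits covering relations, the relations $X_i>X_0$ in $\sttilt(A)$ remain covering relations there, and consequently $Y_0$ has at least $l$ direct predecessors in $\sttilt(A(X))$. Invoking the standard mutation fact that each $\tau$-tilting pair has exactly as many Hasse neighbors as its number of indecomposable summands, $Y_0$ has at most $|A(X)|=l$ neighbors, so all are direct predecessors and $Y_0$ admits no direct successor. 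Being minimal in the finite lattice $\sttilt(A(X))$, $Y_0$ must be the bottom element $(0,A(X))$. Transporting back identifies $X_0$ with $X^-$, which is (b), and the ``in particular'' clause comes for free since the $Y_i$ now exhaust the direct predecessors of $Y_0$ by cardinality.

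For (c), the same isomorphism sends $\vee\{X_0,X_1,\ldots,X_l\}$ to $\vee\{Y_1,\ldots,Y_l\}$, absorbing $Y_0$ as the minimum. I would show directly that the only upper bound of every $Y_j$ in $\sttilt(A(X))$ is the maximum $(A(X),0)$. Writing $A(X)=\bigoplus_{j=1}^{l}P_j$ as a sum of indecomposable projectives, the mutation of $(0,A(X))$ at $P_j$ produces $Y_j=(M_j,\bigoplus_{i\neq j}P_i)$ for some indecomposable $M_j$. The order relation $(M,P)\geq Y_j$ requires $\add P\subseteq\add\bigl(\bigoplus_{i\neq j}P_i\bigr)$, forcing $P_j\notin\add P$; imposing this for every $j$ yields $P=0$, and a $\tau$-tilting pair with vanishing projective part is necessarily $(A(X),0)$. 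Pulling back identifies $\vee\{X_0,X_1,\ldots,X_l\}$ with the Bongartz completion $X^+$. I expect the principal obstacle to lie precisely in this final step: one must keep careful track of the direction of the projective-part inclusion, since the ordering on $\sttilt$ reverses the naive ordering on projective parts. Once the $\tau$-tilting reduction is in place, the remaining work is essentially formal.
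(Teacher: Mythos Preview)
The paper does not give its own proof of this proposition; it is quoted from \cite{Ka24}. Your overall strategy via Jasso's reduction (Proposition~\ref{prop:jred}) is sound, and the arguments for (a), (b), and the ``in particular'' clause are correct.

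There is, however, a genuine error in your argument for (c). The assertion that ``a $\tau$-tilting pair with vanishing projective part is necessarily $(A(X),0)$'' is false: over $B=\Bbbk(1\to 2)$, for instance, both $(P(1)\oplus P(2),0)$ and $(P(1)\oplus S(1),0)$ are $\tau$-tilting pairs with zero projective part, yet only the first is the maximum. Deducing $P=0$ from the inclusions $\add P\subseteq\add\bigoplus_{i\neq j}P_i$ is correct but insufficient; you must also invoke the other half of the order relation, namely that $(M,P)\ge Y_j$ requires $\Hom_{B}(M_j,\tau M)=0$, where $B:=A(X)$. As recorded in the paper just before Proposition~\ref{prop:thm-kase}, the direct predecessors of $(0,B)$ are $Y_j=(B/B(1-e_j)B,(1-e_j)B)$, so each $M_j$ surjects onto the simple $S(j)$. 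Hence $\Hom_B(S(j),\tau M)=0$ for every $j$, giving $\soc(\tau M)=0$ and thus $\tau M=0$, so $M$ is projective; combined with $P=0$ and $|M|=|B|$ this now forces $(M,0)=(B,0)$. The difficulty you anticipated---tracking the direction of the projective-part inclusion---is not the real issue; rather, you discarded half of the defining inequality.
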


Let $(M,P)$ be a $\tau$-rigid pair, $(M^{+},P)$ its Bongartz completion and $(M^{-},P^{-})$ its co-Bongartz completion.
Define a full subcategory $\mathcal{W}(M,P)$ of $\mod A$ as
\begin{align}
\mathcal{W}(M,P):= {}^{\perp}(\tau M)\cap P^{\perp}\cap M^{\perp}=\Fac(M^{+}) \cap (M^{-})^{\perp}, \notag
\end{align}
where $\Fac(M^{+})$ is a full subcategory of $\mod A$ consisting of factor modules of finite direct sums of copies of $M^{+}$.
This is a wide subcategory of $\mod A$ and is called a \emph{$\tau$-perpendicular category}.
It is known that $\tau$-perpendicular categories are realized as module categories of algebras.

\begin{proposition}[{\cite[Theorem 4.12]{DIRRT23}}]\label{prop:DIRRT-thm412}
Let $(M,P)$ be a $\tau$-rigid pair and $(M^{+},P)$ its Bongartz completion. 
Then there exists an equivalence of categories
\begin{align}
\mathcal{W}(M,P)\rightarrow \mod(A(M,P)),\notag
\end{align}
where $A(M,P)$ is a $\tau$-tilting reduction with respect to $(M,P)$.
In particular, the Ext-quiver of $\simp(\mathcal{W}(M,P))$ is isomorphic to the Gabriel quiver of $A(M,P)$.
\end{proposition}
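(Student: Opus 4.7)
The plan is to construct the equivalence through the representable functor
\[ F := \Hom_A(M^+, -) \colon \mathcal{W}(M,P) \to \mod B, \]
where $B := \End_A(M^+)$, and to identify its image with $\mod A(M,P) = \mod(B/\langle e_M\rangle)$, with $e_M$ the idempotent in $B$ cutting out the summand $M$. First, I would check that $F$ lands in $\mod A(M,P)$: any $X \in \mathcal{W}(M,P)$ lies in $M^\perp$, so $\Hom_A(M,X)=0$ and the idempotent $e_M$ kills $F(X)$, forcing $F(X)$ to be annihilated by the two-sided ideal $\langle e_M\rangle$.

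For fully faithfulness, the key input is that $M^+$ is $\tau$-rigid and that $\mathcal{W}(M,P) \subseteq \Fac(M^+)$. By Auslander--Reiten duality together with the torsion-theoretic equality $\Fac(M^+) = {}^\perp(\tau M^+) \cap P^\perp$ valid for the support $\tau$-tilting pair $(M^+,P)$, one obtains $\Ext_A^1(M^+, Y) = 0$ for every $Y \in \mathcal{W}(M,P)$. Choosing a presentation $M_1 \to M_0 \to X \to 0$ with $M_i \in \add M^+$, the functor $\Hom_A(-,Y)$ stays exact on this sequence, while $F$ turns it into a projective presentation in $\mod B$. The natural isomorphism $\Hom_B(FM_i, FY) \cong \Hom_A(M_i,Y)$ then yields $\Hom_A(X,Y) \cong \Hom_{A(M,P)}(FX, FY)$ by a short diagram chase.

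For essential surjectivity I would work backwards: given $N \in \mod A(M,P)$, viewed as a $B$-module killed by $e_M$, take a projective presentation $Be_1 \to Be_0 \to N \to 0$ whose idempotents avoid $e_M$. These correspond to summands $U_i$ of $M^+/M$, and lifting the structure map of the presentation to a morphism $U_1 \to U_0$ of $A$-modules and taking its cokernel gives an object of $\Fac(M^+/M) \subseteq \mathcal{W}(M,P)$ whose image under $F$ is $N$. The "in particular" clause is then automatic: an equivalence of abelian categories identifies simple objects on the two sides, and by definition the Ext-quiver of the simples of $\mod A(M,P)$ is the Gabriel quiver of $A(M,P)$.

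The main obstacle I expect is keeping the support $\tau$-tilting bookkeeping honest: the clean Brenner--Butler-type argument applies cleanly only to genuine tilting modules, so one must leverage the description $\mathcal{W}(M,P) = \Fac(M^+) \cap (M^-)^\perp$ from the hypothesis and verify that passing to the quotient $B/\langle e_M\rangle$ on the $B$-side correctly mirrors intersecting with $M^\perp$ on the $A$-side, especially in the presence of the projective part $P$.
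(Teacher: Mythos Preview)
The paper does not give a proof of this proposition; it is stated as a citation of \cite[Theorem~4.12]{DIRRT23} (building on Jasso's reduction), so there is no argument in the paper to compare against.

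Your strategy via $F=\Hom_A(M^+,-)$ is the standard route, and the fully faithful half is correctly outlined. However, the essential surjectivity sketch contains two real gaps. First, the claimed inclusion $\Fac(M^+/M)\subseteq\mathcal{W}(M,P)$ is false in general: for $A=\Bbbk(1\to 2)$ and $M=P(2)$ one has $M^+/M=P(1)$, yet $\Hom_A(P(2),P(1))\neq 0$, so $P(1)\notin M^\perp$. Second, and more seriously, a $B$-module $N$ killed by $e_M$ need not admit a $B$-projective presentation with \emph{both} terms in $\add\big(B(1-e_M)\big)$: in the same example, the first syzygy of $S(1)$ over $B=A$ is $S(2)$, whose top is exactly the simple at $e_M$, so no surjection from $\add(P(1))$ onto it exists. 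Thus your proposed lift $U_1\to U_0$ with $U_i\in\add(M^+/M)$ simply does not exist here. The actual argument (as in Jasso) does not lift presentations this way; it proceeds by matching torsion pairs on the two sides and recognising $\mathcal{W}(M,P)$ inside $\Fac(M^+)$ via the condition $\Hom_A(M,-)=0$, or by constructing an explicit quasi-inverse using approximations.
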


For a $\tau$-tilting finite algebra, all wide subcategories are $\tau$-perpendicular categories.

\begin{proposition}[{\cite[Theorem 4.18]{DIRRT23}}]\label{prop:DIRRT-thm418}
Assume that $A$ is $\tau$-tilting finite.
Let $\mathcal{W}$ be a wide subcategory of $\mod A$.
Then there exists a $\tau$-rigid pair $(M,P)$ such that $\mathcal{W}=\mathcal{W}(M,P)$.
In particular, all wide subcategories of $\mod A$ are $\tau$-perpendicular categories.
\end{proposition}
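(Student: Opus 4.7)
The plan is to show that the assignment $(M,P)\mapsto \mathcal{W}(M,P)$ from $\trigidp(A)$ to $\wide(A)$ is surjective. By Proposition \ref{prop:ringel}, a wide subcategory is determined by its semibrick of simples, so it suffices to show that every $\mathcal{S}\in\sbrick(A)$ equals $\simp(\mathcal{W}(M,P))$ for some $\tau$-rigid pair $(M,P)$. By Proposition \ref{prop:DIRRT-thm412} this set of simples identifies, via the equivalence $\mathcal{W}(M,P)\simeq \mod A(M,P)$, with $\simp(A(M,P))$; hence the target becomes to find $(M,P)$ realizing $\mathcal{S}$ as the simples of the $\tau$-tilting reduction, transported back through the equivalence.

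I would argue by induction on $|A|$. If $\mathcal{W}=\mod A$, then it is realized by $(M,P)=(0,0)$: the Bongartz completion is $(A,0)$ and the co-Bongartz completion is $(0,A)$, so $\mathcal{W}(0,0)=\Fac(A)\cap 0^{\perp}=\mod A$. For the inductive step with $\mathcal{W}\subsetneq\mod A$, use Asai's bijection (Remark \ref{rem:tfin-quiver-nomult}(1)) to associate to $\mathcal{S}=\simp(\mathcal{W})$ a $\tau$-tilting pair $(T,Q)\in\sttilt(A)$. Because $\mathcal{W}$ is proper, $(T,Q)\neq (A,0)$, so one can extract a nonzero indecomposable summand $(M_{0},P_{0})$ of $(T,Q)$. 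The crux is to choose $(M_{0},P_{0})$ so that $\mathcal{W}\subseteq\mathcal{W}(M_{0},P_{0})\subsetneq\mod A$; this uses the lattice-theoretic analysis of Proposition \ref{prop:kase35min} together with the $\tau$-tilting finite hypothesis to secure an appropriate summand. Then $A(M_{0},P_{0})$ is again $\tau$-tilting finite, since $\sttilt(A(M_{0},P_{0}))\cong\sttilt_{(M_{0},P_{0})}(A)\subseteq\sttilt(A)$ by Proposition \ref{prop:jred}, and $|A(M_{0},P_{0})|<|A|$. The image $\overline{\mathcal{W}}$ of $\mathcal{W}$ in $\mod A(M_{0},P_{0})$ under Proposition \ref{prop:DIRRT-thm412} is a wide subcategory, so the inductive hypothesis supplies a $\tau$-rigid pair for $A(M_{0},P_{0})$ realizing $\overline{\mathcal{W}}$. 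Lifting through Proposition \ref{prop:jred} produces a $\tau$-rigid pair $(M,P)$ for $A$ (with $(M_{0},P_{0})$ as a direct summand) satisfying $\mathcal{W}(M,P)=\mathcal{W}$.

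The main obstacle is producing the initial reduction: finding a $\tau$-rigid pair $(M_{0},P_{0})$ with $\mathcal{W}\subseteq\mathcal{W}(M_{0},P_{0})\subsetneq\mod A$. Intuitively, this requires a nontrivial $\tau$-rigid pair that lies ``outside'' of $\mathcal{W}$ in a controlled way, so that its $\tau$-perpendicular category still engulfs $\mathcal{W}$; its existence is governed by the interaction between Asai's bijection and the lattice structure of $\sttilt(A)$, and making this interaction precise is the technical heart of the DIRRT argument. The $\tau$-tilting finite hypothesis is indispensable throughout: it is what makes $\sbrick(A)$ finite and Asai's bijection available, and it is inherited by each $\tau$-tilting reduction, so the induction terminates.
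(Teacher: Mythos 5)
The paper does not prove this proposition; it is cited directly as \cite[Theorem 4.18]{DIRRT23}, so there is no in-paper proof for your attempt to be measured against. Evaluated on its own terms, your argument is a reasonable sketch of the reduction-by-induction strategy but it is not a complete proof, and you say so yourself: the crucial step of producing a nonzero indecomposable $\tau$-rigid pair $(M_{0},P_{0})$ with $\mathcal{W}\subseteq\mathcal{W}(M_{0},P_{0})\subsetneq\mod A$ is asserted to exist but never constructed. That is not a minor technicality that can be deferred; it is precisely the content of the theorem. Without it the induction never starts, and nothing in the remainder (Asai's bijection, taking an indecomposable summand of $(T,Q)$) explains why such a summand's $\tau$-perpendicular category should contain $\mathcal{W}$. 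For instance, if $(M_{0},P_{0})=(0,P(i))$ then $\mathcal{W}(0,P(i))=\mod(A/Ae_{i}A)$, and there is no reason a priori that $\mathcal{W}$ avoids $S(i)$ as a composition factor; if $(M_{0},P_{0})=(M_{0},0)$ with $M_{0}\in\add T$, one must still check $\mathcal{W}\subseteq{}^{\perp}(\tau M_{0})\cap M_{0}^{\perp}$, which your sketch never addresses.

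Two secondary points. First, the lifting step at the end silently uses a compatibility between $\tau$-tilting reduction and $\tau$-perpendicular categories: that the preimage under the equivalence of Proposition \ref{prop:DIRRT-thm412} of $\mathcal{W}(M',P')$ for $A(M_{0},P_{0})$ is $\mathcal{W}(M,P)$ for the lifted pair $(M,P)$ over $A$. This is true (it is part of the machinery in Jasso and DIRRT) but it is a lemma, not an immediate consequence of the poset isomorphism in Proposition \ref{prop:jred}, and should be stated. Second, the base case and the observation that $(T,Q)\neq(A,0)$ when $\mathcal{W}\subsetneq\mod A$ are both fine. In summary: the scaffolding is right in spirit, but the theorem's actual content has been relocated to a gap that you flag but do not close.
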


Combining Propositions \ref{prop:ringel} and \ref{prop:DIRRT-thm418}, we have the following result.

\begin{corollary}\label{cor:Qs=QM}
Assume that $A$ is $\tau$-tilting finite. Then we have
\begin{align}
\{Q_{\mathcal{S}}\mid \mathcal{S}\in \sbrick(A)\}
=\{Q_{\simp(A(M,P))} \mid (M,P)\in \trigidp(A)\}.\notag
\end{align}
\end{corollary}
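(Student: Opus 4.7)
The plan is to chain the three preceding propositions to identify both sides of the equality with the same set, namely the collection of Ext-quivers of $\simp(\mathcal{W})$ as $\mathcal{W}$ ranges over wide subcategories of $\mod A$.

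First I would fix an arbitrary finite semibrick $\mathcal{S}\in\sbrick(A)$ and use Proposition \ref{prop:ringel} to produce a (unique) wide subcategory $\mathcal{W}$ with $\mathcal{S}=\simp(\mathcal{W})$; by definition of the Ext-quiver, $Q_{\mathcal{S}}$ then coincides with the Ext-quiver of $\simp(\mathcal{W})$. Next, invoking Proposition \ref{prop:DIRRT-thm418}, which is precisely where $\tau$-tilting finiteness is used, I would write $\mathcal{W}=\mathcal{W}(M,P)$ for some $\tau$-rigid pair $(M,P)$. Proposition \ref{prop:DIRRT-thm412} then furnishes an equivalence $\mathcal{W}(M,P)\simeq \mod(A(M,P))$, so $\simp(\mathcal{W}(M,P))$ corresponds under this equivalence to $\simp(A(M,P))$, and the Ext-quiver is preserved because equivalences of abelian categories preserve $\Ext^{1}$-dimensions between simples. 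Since the Gabriel quiver of any algebra is by definition the Ext-quiver of its simples, this yields $Q_{\mathcal{S}}=Q_{\simp(A(M,P))}$, giving the inclusion $\subseteq$.

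Conversely, for any $(M,P)\in\trigidp(A)$, Proposition \ref{prop:DIRRT-thm412} identifies $\mathcal{W}(M,P)$ with $\mod(A(M,P))$, so $\mathcal{S}:=\simp(\mathcal{W}(M,P))$ is a finite semibrick (it is in bijection with the simples of $A(M,P)$, and there are finitely many since $A(M,P)$ is a finite-dimensional algebra) whose Ext-quiver $Q_{\mathcal{S}}$ equals the Gabriel quiver $Q_{\simp(A(M,P))}$. This gives the reverse inclusion $\supseteq$.

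I do not anticipate a genuine obstacle: the statement is essentially a tautological splicing of Propositions \ref{prop:ringel}, \ref{prop:DIRRT-thm412}, and \ref{prop:DIRRT-thm418}. The only mildly subtle point worth emphasizing is the identification of $Q_{\mathcal{S}}$ with $Q_{\simp(A(M,P))}$, which relies on the fact that the equivalence $\mathcal{W}(M,P)\simeq \mod(A(M,P))$ preserves $\Ext^{1}$ computed inside each abelian category; since $\mathcal{W}(M,P)$ is an exact abelian subcategory of $\mod A$, its internal $\Ext^{1}$ between simples agrees with the quantities used to define $Q_{\mathcal{S}}$, so no discrepancy arises.
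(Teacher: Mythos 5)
Your proof is correct and follows essentially the same route as the paper's: chain Proposition \ref{prop:ringel} with Proposition \ref{prop:DIRRT-thm418} and Proposition \ref{prop:DIRRT-thm412} to identify both sides with Ext-quivers of simples of $\tau$-perpendicular categories. Your added remark about why the equivalence preserves $\Ext^{1}$-quivers (because $\mathcal{W}(M,P)$ is an exact abelian subcategory) makes explicit a point the paper leaves implicit, but is not a departure in method.
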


\begin{proof}
Let $\mathcal{S}$ be a semibrick in $\mod A$.
By Proposition \ref{prop:ringel}, there exists a wide subcategory $\mathcal{W}$ of $\mod A$ such that $\mathcal{S}=\simp(\mathcal{W})$. 
Since $A$ is $\tau$-tilting finite, it follows from Proposition \ref{prop:DIRRT-thm418} that $\mathcal{W}$ is equivalent to $\mod(A(M,P))$ for some $\tau$-rigid pair $(M,P)$.
Thus we have $Q_{\mathcal{S}}=Q_{\simp(A(M,P))}$.

Conversely, let $(M,P)$ be a $\tau$-rigid pair in $\mod A$. 
By $\simp(\mathcal{W}(M,P))\in \sbrick(A)$ and $\mathcal{W}(M,P)\cong \mod(A(M,P))$, we have $Q_{\simp (A(M,P))}=Q_{\simp(\mathcal{W}(M,P))}$.
\end{proof}

We provide an example that $\tau$-tilting theory is useful to study Frobenius--Perron dimension.

\begin{example}\label{ex:fpdimsp}
Let $A=\Bbbk Q/I$, where 
\begin{align}
Q:\xymatrix{1\ar@(dl,ul)^-{a}\ar@<1mm>[r]^-{b} & 2 \ar@<1mm>[r]^-{c} \ar@<1mm>[l]^-{e} &3 \ar@<1mm>[l]^-{d}}\notag
\end{align}
and $I=\langle a^{2}, ab, be, bcd, eb, cdcd, cde-ea \rangle$.
By \cite[Theorem 3.3]{K17}, the set of isomorphism classes of basic $\tau$-tilting pairs for $A$ is isomorphic to that for some preprojective algebra of Dynkin type $\mathsf{A}$.  Thus $A$ is $\tau$-tilting finite (see Proposition \ref{prop:miz}).
We can easily check $\rho(Q)<1.9$.
Let $M$ be the cokernel of $P(3)\xrightarrow{c}P(2)$.
Then $(M,0)$ is a $\tau$-rigid pair and the Bongartz completion $(M^{+},0)$ is given by $(P(1)\oplus P(2)\oplus M,0)$.
By Corollary \ref{cor:Qs=QM}, there exists a semibrick $\mathcal{S}$ such that $Q_{\mathcal{S}}$ is the Gabriel quiver of $A(M,0)$, which is given by 
\begin{align}
\xymatrix{1\ar@(dl,ul)\ar@<1mm>[r] & 2 \ar@(ur,dr)\ar@<1mm>[l]}\hspace{6mm}.\notag
\end{align}
Since the adjacent matrix is $\left[\begin{smallmatrix}1&1\\1&1\end{smallmatrix}\right]$, we have $\rho(Q_{\mathcal{S}})=2$.
This implies that $\FPdim(A)\geq \rho(Q_{\mathcal{S}})>\rho(Q)$.
Hence the Frobenius--Perron dimension is not necessarily equal to the spectral radius of the Gabriel quiver.
\end{example}

\section{Frobenius--Perron dimension of the $\tau$-tilting finite lattice}

In this section, we introduce the Frobenius--Perron dimension of a finite lattice, and compare the Frobenius--Perron dimension of a $\tau$-tilting finite algebra $A$ and that of the finite lattice $\sttilt(A)$.

Let $L:=(L,\leq)$ be a finite lattice and 
\begin{align}
\mathcal{U}^{+}:=\mathcal{U}^{+}(L):=\{ (x, Y) \mid x\in L, \emptyset\neq Y\subseteq \mathrm{dp}(x)\}.\notag
\end{align}
For $u=(x,Y)\in \mathcal{U}^{+}$, we define a quiver $Q(u)$ as follows:
the vertex set is equal to $Y$, and we draw a unique arrow $y\to y'$ if $y\not\in \mathrm{ds}(y\vee y')$ and $y\neq y'$.
Define the Frobenius--Perron dimension of $L$ as
\begin{align}
\FPdim(L):
&=\sup\{\rho(Q(u))\mid u\in \mathcal{U}^{+}(L)\}\notag\\ 
&=\sup\{\rho(Q(x,\mathrm{dp}(x)))\mid x\in L\setminus\{\text{maximum element}\}\}.\notag
\end{align}
As seen in the following example, the Frobenius--Perron dimension of a finite lattice can be calculated in a combinatorial method.

\begin{example}\label{ex:fpdimfinlat}
Let $L$ be a finite lattice given by the following Hasse quiver:
\begin{align}
\scalebox{0.6}{\xymatrix{
&&\bullet\ar[rr]\ar@/^5mm/[rrrrr]&&\bullet\ar[rr]&&\bullet\ar@/^5mm/[rrrrr]\ar[drr]&\bullet\ar[rr]&&y'_{1}\ar[rd]\ar[rr]&&\bullet\ar[dr]&&\\
&\bullet\ar[ur]\ar[rr]&&\bullet\ar[ur]\ar[dr]&&y''_{1}\ar[urr]\ar[drr]&&&\bullet\ar[drr]&&x'\ar[rr]&&y_{1}\ar[dr]&\\
\bullet\ar[ur]\ar[rrr]\ar[dr]&&&\bullet\ar[urr]\ar[drr]&\bullet\ar[rr]&&\bullet\ar[urr]\ar[drr]&x''\ar[rr]&&y'_{2}\ar[ur]\ar[dr]&y_{2}\ar[rrr]&&&x .\\
&\bullet\ar[rr]\ar[dr]&&\bullet\ar[ur]\ar[dr]&&y''_{2}\ar[urr]\ar[drr]&&&\bullet\ar[urr]&&\bullet\ar[rr]&&y_{3}\ar[ur]&\\
&&\bullet\ar[rr]\ar@/^-5mm/[rrrrr]&&\bullet\ar[rr]&&\bullet\ar[urr]\ar@/^-5mm/[rrrrr]&\bullet\ar[rr]&&\bullet\ar[ur]\ar[rr]&&\bullet\ar[ur]&&\\
&&&&&&&&&&&&&
}}\notag
\end{align}
Consider the elements $u=(x,\{y_{1},y_{2},y_{3}\})$, $u'=(x',\{y'_{1},y'_{2}\})$, and $u''=(x'',\{y''_{1},y''_{2}\})$ in $\mathcal{U}^{+}(L)$.
Then we have the following quivers respectively:
\begin{align}
Q(u)=(\xymatrix{\bullet\ar@<0.5mm>[r]\ar@/^4mm/[rr]&\bullet\ar@<0.5mm>[r]\ar@<0.5mm>[l] &\bullet\ar@<0.5mm>[l]\ar@/^4mm/[ll]}),\quad
Q(u')=(\xymatrix{\bullet\ar@/^2mm/[r] &\bullet\ar@/^2mm/[l]}),\quad
Q(u'')=(\xymatrix{\bullet& \bullet}).\notag
\end{align}
We can check that, for each $v\in \mathcal{U}^{+}$, the quiver $Q(v)$ is isomorphic to a subquiver of one of the quivers above.
By Lemma \ref{lem:spectrad-subquiver}(1), we have 
\begin{align}
\FPdim(L)=\rho(Q(u))=2,\notag
\end{align}
where the spectral radius of $Q(u)$ can be easily calculated by hand or computer. 
\end{example}

For a $\tau$-tilting finite algebra $A$, we study a relationship between $\FPdim(A)$ and $\FPdim(\sttilt(A))$.
Let us begin by comparing the Ext-quivers $Q_{\mathcal{S}}$ for $\mathcal{S}\in \sbrick(A)$ and the quivers $Q(u)$ for $u\in \mathcal{U}^{+}(\sttilt(A))$.
By \cite[Theorem 1.3]{Ka24}, the Gabriel quiver $Q$ of $A$ can be reconstructed from the lattice $\sttilt(A)$ up to loops.
For each $i\in Q_{0}$, an $A$-module $X_{i}:=(A/A(1-e_{i})A,(1-e_{i})A)$ is a $\tau$-tilting pair for $A$ and the set $\{ X_{i}\mid i\in Q_{0}\}$ coincides with the set of all direct predecessors of the $\tau$-tilting pair $(0,A)$. Thus we have 
\begin{align}
u_{0}^{A}:=((0,A),\{ X_{i}\mid i\in Q_{0}\})\in \mathcal{U}^{+}(\sttilt(A)).\notag
\end{align}
The following proposition plays an important role in this section.

\begin{proposition}[{\cite[Theorem 3.8(1)--(3)]{Ka24}}]\label{prop:thm-kase}
Assume that $A$ is $\tau$-tilting finite.
Let $Q$ be the Gabriel quiver of $A$ and $Q^{\circ}$ the subquiver of $Q$ obtained by removing all loops. Then we have $Q^{\circ}= Q(u_{0}^{A})$.
\end{proposition}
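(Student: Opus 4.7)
The plan is to show, for each pair $i\neq j$ in $Q_0$, that there is an arrow $i\to j$ in $Q^{\circ}$ if and only if $X_i\notin \mathrm{ds}(X_i\vee X_j)$ in $\sttilt(A)$, by reducing to a two-simple $\tau$-tilting reduction. Let $e:=e_i+e_j$ and $X:=(0,(1-e)A)\in \trigidp(A)$. An inspection of summands shows that $X$ is the maximal common direct summand of $(0,A),X_i,X_j$: their only common projective summands are $(0,e_kA)$ for $k\neq i,j$, and neither $X_i$ nor $X_j$ shares a non-projective summand with $(0,A)$. Applying Proposition \ref{prop:kase35min}(1) with $X_0=(0,A)$ and $\{X_i,X_j\}\subseteq \mathrm{dp}((0,A))$ yields $(0,A)=X^{-}$, $X_i\vee X_j=X^{+}$, and hence $\sttilt_X(A)=[(0,A),X_i\vee X_j]$, which by Proposition \ref{prop:jred} is isomorphic as a lattice to $\sttilt(B)$ for $B:=A(X)=\End_A(M^{+})$ (the $[M]$-quotient being trivial since $M=0$).

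Next, I would identify $B$ with $A/A(1-e)A$ by computing $\mathcal{W}(X)=((1-e)A)^{\perp}=\{N\in\mod A : N(1-e)=0\}=\mod(A/A(1-e)A)$; since both $\mod B$ and $\mod(A/A(1-e)A)$ are module categories of basic algebras with exactly two simples, $B\cong A/A(1-e)A$. Because $\mathcal{W}(X)$ is extension-closed, $\Ext^{1}_{B}(S(k),S(l))=\Ext^{1}_{A}(S(k),S(l))$ for $k,l\in\{i,j\}$, so the Gabriel quiver of $B$ is precisely the full subquiver of $Q$ on $\{i,j\}$. By order-convexity of the interval $\sttilt_X(A)$ in $\sttilt(A)$, direct successors of $X_i\vee X_j$ in $\sttilt(A)$ that lie in $\sttilt_X(A)$ coincide with direct successors of the maximum of $\sttilt_X(A)$; under the isomorphism with $\sttilt(B)$, which sends $(0,A)\mapsto (0,B)$ and (via the equivalence $\mathcal{W}(X)\simeq \mod B$ sending each simple $S(k)$ to the simple of $B$ at vertex $k$) sends $X_k\mapsto X_k^{B}$ for $k\in\{i,j\}$, we obtain $X_i\in \mathrm{ds}_{\sttilt(A)}(X_i\vee X_j)$ iff $X_i^{B}\in \mathrm{ds}_{\sttilt(B)}((B,0))$.

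The final ingredient is a direct mutation calculation in the two-simple algebra $B$: the direct successors of $(B,0)$ are the two left $\tau$-tilting mutations at the indecomposable projectives $P_B(i)$ and $P_B(j)$, and the mutation at $P_B(i)$ equals $X_i^{B}=(P_B(j),e_iB)$ precisely when the minimal left $\add(P_B(j))$-approximation of $P_B(i)$ vanishes, equivalently $\Hom_B(P_B(i),P_B(j))=e_iBe_j=0$. A short induction on path length shows that in a basic two-vertex bound quiver algebra $e_iBe_j$ is spanned by paths from $i$ to $j$, all of which factor through an arrow $i\to j$; hence $e_iBe_j=0$ iff there is no arrow $i\to j$ in the Gabriel quiver of $B$, iff no arrow $i\to j$ in $Q^{\circ}$. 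Composing the chain of equivalences yields the desired equality $Q^{\circ}=Q(u_0^{A})$. The main obstacle is the two-simple mutation step: it relies on the exchange-triangle formalism of $\tau$-tilting mutation from Adachi--Iyama--Reiten, but once this is available the rest of the argument is a direct chase through the $\tau$-perpendicular reduction.
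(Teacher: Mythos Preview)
The paper does not supply its own proof of this proposition: it is quoted verbatim as \cite[Theorem~3.8(1)--(3)]{Ka24} and used as a black box. So there is nothing in the present paper to compare your argument against; what you have written is an independent proof sketch of Kase's result using the reduction machinery (Propositions~\ref{prop:jred}, \ref{prop:kase35min}, \ref{prop:DIRRT-thm412}) that the paper already imports.

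Your strategy is sound and does yield the statement. Two points deserve tightening. First, the bookkeeping in the two-simple mutation step is off: $X_i^{B}=(B/Be_j^{B}B,\;P_B(j))$, not $(P_B(j),e_iB)$, and under the usual conventions $\Hom_B(P_B(i),P_B(j))\cong e_jBe_i$. What you actually need (and what your argument really shows once the indices are straightened) is that $X_i^{B}$ is a direct successor of $(B,0)$ precisely when $B/Be_j^{B}B\cong P_B(i)$, i.e.\ when $e_iBe_j=0$, i.e.\ when there is no arrow $i\to j$; this is the correct direction for the desired equivalence. Second, the claim that the Jasso isomorphism $\sttilt_X(A)\cong\sttilt(B)$ sends $X_k\mapsto X_k^{B}$ is true but deserves an explicit sentence: it follows because both isomorphisms (Jasso's and the torsion-class description via $\mathcal{W}(X)\simeq\mod B$) are induced by intersecting torsion classes with $\mathcal{W}(X)$, and $\Fac(A/A(1-e_k)A)\cap\mathcal{W}(X)$ corresponds to the torsion class of $X_k^{B}$ in $\mod B$. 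With these fixes your argument is complete and gives a self-contained proof of the cited result.
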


Applying the proposition above to $\tau$-tilting reduction technique by Jasso \cite{J15}, we have the following result.
Let $X$ be a $\tau$-rigid pair for $A$ and $X^{-}$ its co-Bongartz completion.
Consider the element
\begin{align}
u(X):=(X^{-},\mathrm{dp}_{X}(X^{-}))\in \mathcal{U}^{+}(\sttilt(A)),\notag
\end{align}
where $\mathrm{dp}_{X}(X^{-})$ is the set of all direct predecessors of $X^{-}$ in $\sttilt_{X}(A)$.

\begin{proposition}\label{prop:quiver-isom}
Assume that $A$ is $\tau$-tilting finite.
Let $X$ be a $\tau$-rigid pair for $A$ and let $Q$ be the Gabriel quiver of the $\tau$-tilting reduction $A(X)$ with respect to $X$.
Then we have $Q^{\circ}= Q(u(X))$.
\end{proposition}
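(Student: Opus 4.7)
The plan is to reduce the statement to Proposition \ref{prop:thm-kase} by transporting the data $(X^-, \mathrm{dp}_X(X^-))$ across the $\tau$-tilting reduction equivalence, and then checking that the combinatorial construction $u \mapsto Q(u)$ is compatible with this reduction.

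First, I would invoke Proposition \ref{prop:jred} to obtain a poset isomorphism
\[
\phi \colon \sttilt_X(A) \xrightarrow{\ \sim\ } \sttilt(A(X)).
\]
Since $X^{-}$ is, by definition, the minimum of $\sttilt_X(A)$, its image $\phi(X^{-})$ is the minimum of $\sttilt(A(X))$, namely $(0,A(X))$. As $\phi$ is an isomorphism of Hasse quivers, it restricts to a bijection $\mathrm{dp}_X(X^{-}) \to \mathrm{dp}((0,A(X)))$. Writing $Q$ for the Gabriel quiver of $A(X)$, Proposition \ref{prop:thm-kase} applied to $A(X)$ identifies $\mathrm{dp}((0,A(X)))$ with the set $\{X_i^{A(X)} \mid i \in Q_0\}$, and moreover asserts $Q^{\circ} = Q(u_0^{A(X)})$. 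Thus $\phi$ sends $u(X)$ to $u_0^{A(X)}$, and it remains to show $Q(u(X)) = Q(u_0^{A(X)})$, i.e.\ that the construction $u \mapsto Q(u)$ commutes with $\phi$.

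The key observation is that $\sttilt_X(A)$ coincides with the interval $\mathsf{Int}(X) = [X^{-}, X^{+}]$ of $\sttilt(A)$ (by \cite[Theorem 4.4]{DIRRT23}, as recalled in the excerpt), hence is a sublattice of $\sttilt(A)$: joins and meets computed in $\sttilt_X(A)$ agree with those computed in $\sttilt(A)$. In particular, for any $y,y' \in \mathrm{dp}_X(X^{-})$, the join $y \vee y'$ is the same element whether taken in $\sttilt(A)$ or in $\sttilt_X(A)$. For the direct-successor condition $y \in \mathrm{ds}(y \vee y')$, any element $z$ of $\sttilt(A)$ with $y \leq z \leq y \vee y'$ automatically lies in $[X^{-}, X^{+}] = \sttilt_X(A)$, so $y$ is a direct successor of $y \vee y'$ in $\sttilt(A)$ if and only if it is so in $\sttilt_X(A)$. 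Combining these facts with the fact that the lattice isomorphism $\phi$ preserves joins and covering relations, the quivers $Q(u(X))$ and $Q(\phi(u(X))) = Q(u_0^{A(X)})$ are identical (same vertex set via $\phi$, same arrows).

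Putting the pieces together gives $Q^{\circ} = Q(u_0^{A(X)}) = Q(u(X))$, which is the desired equality.

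The main obstacle I anticipate is the verification that the auxiliary data used to define $Q(u)$ — namely joins and the relation ``direct successor of a join'' — behave correctly under passage from $\sttilt(A)$ to $\sttilt_X(A)$. Once one recognizes that $\sttilt_X(A)$ is an \emph{interval} (and not merely a subposet) of $\sttilt(A)$, both points become formal; without this observation it would be tempting to worry about extra direct successors in the ambient lattice spoiling the arrow condition. Everything else is a direct application of Propositions \ref{prop:jred} and \ref{prop:thm-kase}.
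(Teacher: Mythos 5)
Your proof is correct and follows essentially the same route as the paper's: apply Proposition \ref{prop:jred} to get the poset isomorphism $\sttilt_X(A)\cong\sttilt(A(X))$, then apply Proposition \ref{prop:thm-kase} to $A(X)$. The paper passes immediately from the poset isomorphism to $Q(u(X))\cong Q(u_0^{A(X)})$ without comment; you supply the (correct) justification that this step is not automatic but follows from $\sttilt_X(A)$ being an \emph{interval} in $\sttilt(A)$, so that joins and covering relations are computed identically in the sublattice and in the ambient lattice.
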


\begin{proof}
Let $X$ be a $\tau$-rigid pair for $A$.
By Proposition \ref{prop:jred}, we have a poset isomorphism $\sttilt_{X}(A)\cong \sttilt(A(X))$, and hence $Q(u(X))\cong Q(u^{A(X)}_{0})$.
On the other hand, by Proposition \ref{prop:thm-kase}, the quiver $Q(u^{A(X)}_{0})$ is isomorphic to $Q^{\circ}$. Thus we have the assertion.
\end{proof}

Combining Corollary \ref{cor:Qs=QM} and Proposition \ref{prop:quiver-isom}, we have the desired result.

\begin{proposition}\label{prop:kj-quiver}
Assume that $A$ is $\tau$-tilting finite.
Then the following statements hold.
\begin{itemize}
\item[(1)] $\mathcal{U}^{+}(\sttilt(A))=\{ u(X)\mid X\in \trigidp(A)\}$.
\item[(2)] $\{Q_{\mathcal{S}}^{\circ}\mid \mathcal{S}\in \sbrick(A)\}=\{Q(u)\mid u\in \mathcal{U}^{+}(\sttilt(A))\}$.
\end{itemize}
\end{proposition}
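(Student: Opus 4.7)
The plan is to deduce part (2) from part (1) together with Corollary \ref{cor:Qs=QM} and Proposition \ref{prop:quiver-isom}, so the real content lies in proving (1). The guiding idea is that the assignment $X \mapsto u(X)$ should be inverse to the ``maximal common direct summand'' construction supplied by Proposition \ref{prop:kase35min}(1).

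For part (1), I would prove the two inclusions separately. For $\{u(X) \mid X \in \trigidp(A)\} \subseteq \mathcal{U}^{+}(\sttilt(A))$: given a $\tau$-rigid pair $X$ (tacitly excluding the case when $X$ is already $\tau$-tilting, in which case $u(X)$ has empty second coordinate), I must verify that $\mathrm{dp}_{X}(X^{-})$ is a non-empty subset of $\mathrm{dp}(X^{-})$. Non-emptiness follows from $X^{-} < X^{+}$ inside the interval $\sttilt_{X}(A) = [X^{-},X^{+}]$. The containment is a general interval argument: if some $z \in \sttilt(A)$ satisfied $X^{-} < z < X_{1}$ for $X_{1} \in \mathrm{dp}_{X}(X^{-})$, then $z < X_{1} \leq X^{+}$ would place $z$ in $[X^{-}, X^{+}]$, contradicting that $X_{1}$ covers $X^{-}$ inside the sub-lattice. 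For the reverse inclusion, given $(x, Y) \in \mathcal{U}^{+}(\sttilt(A))$ with $Y = \{X_{1}, \ldots, X_{l}\} \subseteq \mathrm{dp}(x)$, I would take $X$ to be a maximal common direct summand of $x, X_{1}, \ldots, X_{l}$ and invoke Proposition \ref{prop:kase35min}(1): clauses (a), (b) and the concluding ``in particular'' statement yield respectively that $X$ is a $\tau$-rigid pair, that $x = X^{-}$, and that $Y = \mathrm{dp}_{X}(X^{-})$, whence $(x,Y) = u(X)$.

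For part (2), I would assemble three ingredients: Corollary \ref{cor:Qs=QM} identifies $\{Q_{\mathcal{S}} \mid \mathcal{S} \in \sbrick(A)\}$ with $\{Q_{\simp(A(M,P))} \mid (M,P) \in \trigidp(A)\}$, and by the isomorphism between the Ext-quiver $Q_{\simp(A)}$ and the Gabriel quiver of $A$ recorded in the preliminaries, $Q_{\simp(A(M,P))}$ is exactly the Gabriel quiver of the $\tau$-tilting reduction $A(M,P)$; removing loops and applying Proposition \ref{prop:quiver-isom} then gives $Q_{\mathcal{S}}^{\circ} = Q(u(M,P))$; and finally part (1) ensures that as $(M,P)$ ranges over $\trigidp(A)$ the element $u(M,P)$ exhausts $\mathcal{U}^{+}(\sttilt(A))$.

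The only mild obstacle I anticipate is bookkeeping for the edge case $\mathcal{S} = \emptyset$, corresponding to $(M,P)$ being a $\tau$-tilting pair, where $u(M,P)$ is not defined under the strict definition of $\mathcal{U}^{+}$. This contributes the empty quiver to the left-hand side of (2) but nothing to the right-hand side. Since the empty quiver has spectral radius zero, this discrepancy is harmless for the FP-dimension comparisons that follow; it can be absorbed either by silently restricting to non-$\tau$-tilting $\tau$-rigid pairs or by extending $\mathcal{U}^{+}$ to allow $Y = \emptyset$ at the maximum of the lattice.
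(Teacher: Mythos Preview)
Your proposal is correct and follows essentially the same route as the paper: both prove (1) by using Proposition \ref{prop:kase35min}(1) to produce the $\tau$-rigid pair $X$ realizing a given $(x,Y)\in\mathcal{U}^{+}$, and both derive (2) by chaining Corollary \ref{cor:Qs=QM}, Proposition \ref{prop:quiver-isom}, and part (1). Your treatment is in fact slightly more careful than the paper's, since you spell out why $\mathrm{dp}_{X}(X^{-})\subseteq\mathrm{dp}(X^{-})$ (the paper just writes ``by definition'') and you flag the $\tau$-tilting/empty-semibrick edge case that the paper silently ignores.
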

\begin{proof}
(1) By definition, we have $u(X)\in \mathcal{U}^{+}(\sttilt(A))$ for each $\tau$-rigid pair $X$.
Consider an element $u:=(X_{0}, \{ X_{1}, \ldots, X_{l}\})\in \mathcal{U}^{+}(\sttilt(A))$.
By Proposition \ref{prop:kase35min}(1), there exists a $\tau$-rigid pair $X$ such that $u=u(X)$.
Therefore we have the assertion.

(2) The assertion follows from 
\begin{align}
\{ Q^{\circ}_{\mathcal{S}}\mid \mathcal{S}\in \sbrick(A)\}
&=\{Q_{\simp(A(X))}^{\circ}\mid X\in \trigidp(A)\}&&\text{by Corollary \ref{cor:Qs=QM}}\notag\\
&=\{ Q(u(X))\mid X\in \trigidp(A)\}&&\text{by Proposition \ref{prop:quiver-isom}}\notag\\
&=\{ Q(u)\mid u\in \mathcal{U}^{+}(\sttilt(A))\}&&\text{by (1)}\notag
\end{align}
The proof is complete.
\end{proof}

The following theorem is one of main results in this paper.

\begin{theorem}\label{thm:ul_bound}
Assume that $A$ is a $\tau$-tilting finite $\Bbbk$-algebra.
Then we have
\begin{align}
\max\{\FPdim(\sttilt(A)), d_{b}\} \le \FPdim(A) \le \FPdim(\sttilt(A))+d_{b}, \notag
\end{align}
where $d_{b}:=\max\{\dim_{\Bbbk}\Ext_{A}^{1}(S,S)\mid S\in \brick(A)\}$.
In particular, if each brick has no non-trivial self-extension, then we have
\begin{align}
\FPdim(A) = \FPdim(\sttilt(A)). \notag
\end{align}
\end{theorem}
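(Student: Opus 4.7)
The plan is to exploit Proposition~\ref{prop:kj-quiver}(2), which identifies the set of loop-free parts $Q_{\mathcal{S}}^{\circ}$ of Ext-quivers of semibricks with the set of combinatorial quivers $Q(u)$ indexed by $\mathcal{U}^{+}(\sttilt(A))$. Thus, up to the diagonal contribution coming from self-extensions of bricks, the quivers that compute $\FPdim(A)$ are exactly the quivers that compute $\FPdim(\sttilt(A))$. The whole argument consists of controlling the discrepancy between $Q_{\mathcal{S}}$ and $Q_{\mathcal{S}}^{\circ}$, which is precisely the diagonal matrix of loop-counts, whose entries are bounded by $d_{b}$.

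For the lower bound, I would first argue $\FPdim(\sttilt(A))\leq \FPdim(A)$ as follows: given any $u\in \mathcal{U}^{+}(\sttilt(A))$, Proposition~\ref{prop:kj-quiver}(2) provides a semibrick $\mathcal{S}$ with $Q(u)=Q_{\mathcal{S}}^{\circ}$, and since $Q_{\mathcal{S}}^{\circ}$ is a subquiver of $Q_{\mathcal{S}}$, Lemma~\ref{lem:spectrad-subquiver}(1) yields $\rho(Q(u))\leq \rho(Q_{\mathcal{S}})\leq \FPdim(A)$; taking supremum gives the bound. For the inequality $d_{b}\leq \FPdim(A)$, I would pick a brick $S$ realizing $d_{b}$: the singleton $\{S\}$ is a semibrick, and its Ext-quiver is a single vertex with $d_{b}$ loops, whose adjacency matrix is $[d_{b}]$ with spectral radius $d_{b}$.

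The upper bound is the main step. For any finite semibrick $\mathcal{S}$, decompose the adjacency matrix as $M(Q_{\mathcal{S}})=M(Q_{\mathcal{S}}^{\circ})+D_{\mathcal{S}}$, where $D_{\mathcal{S}}$ is the diagonal matrix whose $(S,S)$-entry is $\dim_{\Bbbk}\Ext_{A}^{1}(S,S)\leq d_{b}$. Hence, entrywise,
\[
0\leq M(Q_{\mathcal{S}})\leq M(Q_{\mathcal{S}}^{\circ})+d_{b}I.
\]
By monotonicity of the spectral radius on non-negative matrices, $\rho(Q_{\mathcal{S}})\leq \rho\bigl(M(Q_{\mathcal{S}}^{\circ})+d_{b}I\bigr)$. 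Since $M(Q_{\mathcal{S}}^{\circ})$ is non-negative, Perron--Frobenius guarantees that $\rho(Q_{\mathcal{S}}^{\circ})$ is itself an eigenvalue, and shifting by $d_{b}I$ shifts every eigenvalue by $d_{b}$, giving $\rho\bigl(M(Q_{\mathcal{S}}^{\circ})+d_{b}I\bigr)=\rho(Q_{\mathcal{S}}^{\circ})+d_{b}$. Combining with Proposition~\ref{prop:kj-quiver}(2) and taking the supremum over semibricks yields $\FPdim(A)\leq \FPdim(\sttilt(A))+d_{b}$. The final ``in particular'' statement is immediate: the assumption $d_{b}=0$ collapses the sandwich.

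The main subtlety is the spectral-radius step in the upper bound: the naive inequality $\rho(A+B)\leq \rho(A)+\rho(B)$ fails for general non-negative matrices, so it is essential that the ``perturbation'' $D_{\mathcal{S}}$ is diagonal and dominated entrywise by a scalar multiple of the identity, which commutes with everything and shifts the spectrum rigidly. Once this is observed, the rest of the argument is a bookkeeping exercise built on the dictionary provided by Proposition~\ref{prop:kj-quiver}.
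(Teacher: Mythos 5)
Your proof is correct and follows essentially the same approach as the paper: both reduce to comparing $Q_{\mathcal{S}}$ with the loop-augmented quiver $Q_{\mathcal{S}}^{\circ}$ via Proposition~\ref{prop:kj-quiver}(2) and the subquiver/monotonicity bound on spectral radii. Your explicit invocation of Perron--Frobenius (to guarantee that $\rho(Q_{\mathcal{S}}^{\circ})$ is an actual eigenvalue so that adding $d_{b}I$ shifts the spectral radius exactly by $d_{b}$) supplies a justification the paper states without comment, but the underlying argument is identical.
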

\begin{proof}
By the definition of $d_{b}$, there exists a brick $S$ such that $\dim_{\Bbbk}\Ext_{A}^{1}(S,S)=d_{b}$. Thus we obtain $\rho(Q_{\{S\}})=d_{b}$, and hence $d_{b}\leq\FPdim(A)$.
Let $\mathcal{S}$ be an arbitrary semibrick in $\mod A$ and $Q_{\mathcal{S}}$ its Ext-quiver.
By Proposition \ref{prop:kj-quiver}(2), there exists $u\in \mathcal{U}^{+}(\sttilt(A))$ such that $Q_{\mathcal{S}}^{\circ}=Q(u)$.
Let $Q(u)^{d_{b}}$ be the quiver constructed from $Q(u)$ adding $d_{b}$ loops for each vertex. 
By maximality of $d_{b}$, the quiver $Q_{\mathcal{S}}$ is a subquiver of $Q(u)^{d_{b}}$.
Thus it follows from Lemma \ref{lem:spectrad-subquiver}(1) that 
\begin{align}
\rho(Q(u))\leq \rho(Q_{\mathcal{S}})\leq \rho(Q(u)^{d_{b}})=\rho(Q(u))+d_{b}.\notag
\end{align} 
This implies that $\FPdim(\sttilt(A))\leq \FPdim(A)\leq \FPdim(\sttilt(A))+d_{b}$.
\end{proof}

The finite lattice $L$ in Example \ref{ex:fpdimfinlat} is realized as the lattice $\sttilt(A)$ for some $\tau$-tilting finite algebra $A$. Thus we have the following result.

\begin{example}
Let $A$ be a radical square zero algebra whose Gabriel quiver is
\begin{align}
\xymatrix{\bullet \ar@<0.5mm>[r] \ar@/^4mm/[rr] & \bullet \ar@<0.5mm>[r] \ar@<0.5mm>[l]&\bullet \ar@<0.5mm>[l] \ar@/^4mm/[ll]}.\notag
\end{align}
Note that $A$ is $\tau$-tilting finite (for a criterion of $\tau$-tilting finiteness, see \cite{Ad16} or \cite{Ao22}).
Then $\sttilt(A)$ is isomorphic to the finite lattice $L$ in Example \ref{ex:fpdimfinlat}. 
Thus $\FPdim(\sttilt A)=2$.
By Theorem \ref{thm:ul_bound}, we have 
\begin{align}
\max\{2, d_{b}\}\leq \FPdim(A) \leq 2+d_{b}. \notag
\end{align}
We can check that each brick has no non-trivial self extension, that is, $d_{b}=0$.
Hence we have $\FPdim(A)=2$.
\end{example}

In the following, we give three applications of Theorem \ref{thm:ul_bound}: multiplicity-free Brauer tree algebras, cluster tilted algebras, and a generalization of \cite[Theorem 4.1]{CC24}.

A multiplicity-free Brauer tree algebra is defined by a finite graph that is a tree.
For the definition and basic results, see \cite{Al86}.

\begin{proposition}
\label{proposition:BTA}
Let $A$ be a multiplicity-free Brauer tree algebra.
Then we have 
\begin{align}
\FPdim(A)=\FPdim(\sttilt(A)).\notag
\end{align}
\end{proposition}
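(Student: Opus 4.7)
By Theorem \ref{thm:ul_bound}, and in particular by its last clause, the desired equality $\FPdim(A)=\FPdim(\sttilt(A))$ will follow as soon as we verify that $d_{b}=0$, i.e.\ that $\Ext^{1}_{A}(S,S)=0$ for every brick $S\in\brick(A)$. My entire plan is to reduce Proposition~\ref{proposition:BTA} to this cohomological vanishing for bricks.

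A multiplicity-free Brauer tree algebra $A$ is a symmetric, special biserial algebra that is representation-finite: the underlying graph being a tree (with all multiplicities equal to one) rules out band modules. Hence by the Butler--Ringel classification of indecomposables for special biserial algebras, every indecomposable $A$-module, and in particular every brick, is a string module $M(w)$ for some string $w$ in the Gabriel quiver of $A$.

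Next I would invoke the well-known combinatorial descriptions (due to Crawley-Boevey, Wald--Waschb\"usch and Schr\"oer) of $\Hom$- and $\Ext^{1}$-spaces between string modules: morphisms $M(w)\to M(w')$ are spanned by admissible pairs of factorizations $w=w_{1}vw_{2}$, $w'=w'_{1}vw'_{2}$ exhibiting a common substring $v$, while $\Ext^{1}_{A}(M(w'),M(w))$ is spanned by certain admissible one-sided overlaps of $w$ and $w'$ at their endpoints, subject to the relations of $A$. Fix now a brick $M(w)$: the brick condition says that the only admissible self-factorization of $w$ is the trivial one. I would then show that, in a multiplicity-free Brauer tree algebra, any admissible self-overlap of $w$ yielding a nonzero class in $\Ext^{1}_{A}(M(w),M(w))$ automatically produces a second, non-trivial admissible self-factorization of $w$, contradicting the brick hypothesis. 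Intuitively, a Brauer tree has no cycles and all multiplicities equal one, so the relations at each vertex (commutativity after a full rotation of the cyclic ordering at that vertex) are short and rigid, and they force such an overlap to come paired with a factorization.

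The main obstacle is exactly this last implication: a careful case analysis of how a brick string $w$ can enter, leave and revisit the vertices of the Brauer tree, and a verification that no overlap configuration compatible with the multiplicity-free relations delivers a nonzero class in $\Ext^{1}_{A}(M(w),M(w))$ without also producing a non-scalar endomorphism. Because the tree has no cycles the possible revisits are tightly controlled, so I expect the analysis to terminate in a small number of cases, but this combinatorial bookkeeping is where the technical heart of the proof lies. If desired, one could bypass this case analysis by instead exploiting that $\tau$-tilting reduction sends multiplicity-free Brauer tree algebras to algebras of the same kind on smaller trees, so that the wide subcategories appearing in Corollary~\ref{cor:Qs=QM} are again module categories of multiplicity-free Brauer tree algebras; an induction on the number of edges would then reduce everything to the smallest non-trivial trees, which can be checked by hand.
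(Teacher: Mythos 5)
You correctly reduce the claim, exactly as the paper does, to showing $d_b=0$, i.e.\ that $\Ext_A^1(S,S)=0$ for every brick $S$ in $\mod A$. But your proof of this vanishing is a sketch, not a proof: you propose classifying self-overlaps of brick string modules subject to the Brauer-tree relations and assert that every overlap producing a nonzero class in $\Ext_A^1(M(w),M(w))$ also produces a non-scalar endomorphism of $M(w)$, yet you explicitly defer this "careful case analysis" and call it "the technical heart of the proof." That step is exactly where all the work lies, and until it is carried out the argument is incomplete. Your fallback suggestion, an induction via $\tau$-tilting reduction, rests on the unproved claim that $\tau$-tilting reductions of multiplicity-free Brauer tree algebras are again multiplicity-free Brauer tree algebras on smaller trees; this is not obvious and is itself a nontrivial assertion that would need justification.

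The paper settles the vanishing instead with a short dichotomy. If the brick $S$ is $\tau$-rigid, the Auslander--Reiten formula immediately gives $\Ext_A^1(S,S)=0$. If $S$ is not $\tau$-rigid, then by a result of Antipov--Zvonareva \cite{AZ14} there is an indecomposable projective $P$ with $S\cong P/\soc P$; applying $\Hom_A(-,S)$ to the short exact sequence $0\to\soc P\to P\to S\to 0$ and using $\Ext_A^1(P,S)=0$ together with $\Hom_A(\soc P,S)=0$ yields $\Ext_A^1(S,S)=0$. Replacing your combinatorial sketch with this argument would close the gap.
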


\begin{proof}
Note that (multiplicity-free) Brauer tree algebras are of finite representation type, and hence $\tau$-tilting finite.
By Theorem \ref{thm:ul_bound}, it is enough to show that $d_{b}=0$.
Let $S$ be an arbitrary brick in $\mod A$.
If $S$ is $\tau$-rigid, then we obtain $\Ext_{A}^{1}(S,S)=0$ by the Auslander--Reiten formula.
Assume that $S$ is not $\tau$-rigid. 
By \cite[Theorem 1]{AZ14}, there exists an indecomposable projective module $P$ such that $S\cong P/\soc P$. 
Applying $\Hom_{A}(-,S)$ to the short exact sequence $0\rightarrow \soc P \rightarrow P \rightarrow S \rightarrow 0$ yields an exact sequence 
\begin{align}
\Hom_{A}(\soc P, S)\rightarrow \Ext_{A}^{1}(S,S)\rightarrow \Ext_{A}^{1}(P,S)=0.\notag
\end{align}
By $\Hom_{A}(\soc P,S)=0$, we have $\Ext_{A}^{1}(S,S)=0$.
This implies that $d_{b}=0$.
\end{proof}

We propose a naive question.
We define two special classes of Brauer tree algebras.
A graph $G$ is called a \emph{star} (respectively, a \emph{line}) if it is isomorphic to the following left-hand (respectively, right-hand) side graph:
\begin{align}
\xymatrix@C=2mm@R=5mm{
&     &\bullet&     &     && &     &&     &&     &&      &&\\
&     &\bullet\ar@{-}[ld]_{}="a"\ar@{-}[u]^{}="b"\ar@{-}[rd]\ar@{-}[rr]&     &\bullet&& &\bullet\ar@{-}[rr]&&\bullet\ar@{-}[rr]&&\bullet\ar@{-}[rr]&&\cdots\ar@{-}[rr]&&\bullet .\\
&\bullet&     &\bullet&     && &     &&     &&     &&      &&
\ar@{.}@/^1pc/"a";"b"
}\notag
\end{align}
We call an algebra $A$ a \emph{Brauer star algebra} (respectively, a \emph{Brauer line algebra}) if it is a Brauer tree algebra defined by a star (respectively, a line).
It is known that Brauer star algebras are symmetric Nakayama algebras, and the Gabriel quivers of Brauer line algebras are the following forms:
\begin{align}\label{eq:An-dquiver}
\xymatrix{1\ar@<1mm>[r]&2\ar@<1mm>[l]\ar@<1mm>[r]&\cdots\ar@<1mm>[l]\ar@<1mm>[r]&n-1\ar@<1mm>[l]\ar@<1mm>[r]&n\ar@<1mm>[l]}.
\end{align}
As will be shown in Theorem \ref{thm:FPdim_of_Nakayama}(2) and Proposition \ref{prop:spect-rad-dynkin}, the Frobenius--Perron dimension of a Brauer star algebra is exactly one, and that of a Brauer line algebra is at least $2\cos(\frac{\pi}{n+1})$, where $n$ is the number of non-isomorphic simple modules.

\begin{question}
Let $G$ be a tree with $n$ edges.
Let $A_{G}$ be the (multiplicity-free) Brauer tree algebra associated with $G$.
Does the following inequalities hold?
\begin{align}
\FPdim(A_{\mathrm{star}})\leq \FPdim(A_{G})\leq \FPdim(A_{\mathrm{line}}),\notag
\end{align}
where $A_{\mathrm{star}}$ is the (multiplicity-free) Brauer star algebra and $A_{\mathrm{line}}$ is the (multiplicity-free) Brauer line algebra with $n$ non-isomorphic simple modules.
\end{question}

Next, we show that a similar statement to Proposition \ref{proposition:BTA} holds for cluster tilted algebras of Dynkin type. Let $\mathcal{C}_{Q}$ be the cluster category of finitely generated $\Bbbk Q$-modules with shift functor $[1]$. An object $T\in \mathcal{C}_{Q}$ is said to be \emph{cluster tilting} if $\add T=\{ X\in \mathcal{C}_{Q}\mid \Hom_{\mathcal{C}_{Q}}(T,X[1])=0\}$.
Note that $\mathcal{C}_{Q}$ always admits a cluster tilting object. 
An algebra $A$ is called a \emph{cluster tilted algebra} if there exists a cluster tilting object $T$ in $\mathcal{C}_{Q}$ such that $A\cong \End_{\mathcal{C}_{Q}}(T)$. 
For detail, see \cite{BMRRT06}.

\begin{proposition}\label{proposition:CTA}
Let $Q$ be a Dynkin quiver, $\mathcal{C}_Q$ the cluster category,  
$T$ a cluster tilting object in $\mathcal{C}_Q$, and $A:=\End_{\mathcal{C}_Q}(T)$ the cluster tilted algebra.
Then we have
\begin{align}
\FPdim(A)=\FPdim(\sttilt(A)).\notag
\end{align}
\end{proposition}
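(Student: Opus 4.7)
The plan is to reduce to Theorem \ref{thm:ul_bound}: cluster tilted algebras of Dynkin type are of finite representation type, hence $\tau$-tilting finite, so the theorem applies and it suffices to show that $d_b = 0$. In other words, the task is to prove that every brick $S$ in $\mod A$ satisfies $\Ext^1_A(S,S) = 0$.

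The key input is the $2$-Calabi--Yau tilted structure of $A$. Recall that $F := \Hom_{\mathcal{C}_Q}(T,-)$ induces the Buan--Marsh--Reiten equivalence $\mathcal{C}_Q/\add T[1] \simeq \mod A$ of \cite{BMRRT06}. A brick $S$ is in particular indecomposable, and hence lifts to a unique indecomposable object $\tilde S \in \mathcal{C}_Q \setminus \add T[1]$. Since $Q$ is Dynkin, every indecomposable of $\mathcal{C}_Q$ is rigid (this is classical for Dynkin cluster categories, see \cite{BMRRT06}), so $\tilde S$ extends to a cluster tilting object $T'$ of $\mathcal{C}_Q$.

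Next I invoke the correspondence between basic cluster tilting objects in $\mathcal{C}_Q$ and basic support $\tau$-tilting pairs for $A$, which matches indecomposable summands on both sides; the result is that $T'$ corresponds to a $\tau$-tilting pair for $A$ admitting $S$ as an indecomposable summand. In particular $S$ is $\tau$-rigid, so $\Hom_A(S,\tau S)=0$. If $S$ is not projective, the Auslander--Reiten formula yields $\Ext^1_A(S,S)\cong D\,\overline{\Hom}_A(S,\tau S)=0$; the projective case is immediate. Thus $d_b = 0$ and Theorem \ref{thm:ul_bound} gives $\FPdim(A) = \FPdim(\sttilt(A))$.

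The delicate step to pin down is the correspondence used in the previous paragraph: one must track both the module part and the shifted-projective part of a $\tau$-tilting pair in order to conclude that the indecomposable summand $\tilde S$ of the cluster tilting object $T'$ actually corresponds to $S$ itself (rather than to a projective summand of the pair). Once this bookkeeping is made precise via the $\tau$-tilting-theoretic lifting from $2$-Calabi--Yau tilted theory, the remaining chain of arguments is just a combination of standard cluster-tilting and Auslander--Reiten facts.
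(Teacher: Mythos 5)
Your proof is correct and follows a genuinely different route than the paper. The paper establishes $d_b=0$ by observing that for every $\tau$-rigid pair $(M,P)$, the $\tau$-tilting reduction $A(M,P)$ is a factor of $\End_{\mathcal{C}_Q}(U)$ for a suitable cluster tilting object $U$ (via the fullness of $F$), and then invoking \cite[Corollary 6.15]{BMRRT06} to conclude $\End_{\mathcal{C}_Q}(U)$, hence $A(M,P)$, has no loops in its Gabriel quiver; Proposition \ref{prop:DIRRT-thm412} then forces every brick to have trivial self-extensions. You instead lift a brick $S$ to an indecomposable $\tilde S\in\mathcal{C}_Q$ outside $\add T[1]$, use the classical fact that every indecomposable in a Dynkin cluster category is rigid (which in $\mathcal{C}_Q$ reduces to the absence of self-extensions for indecomposable $\Bbbk Q$-modules, combined with the $2$-CY description $\Ext^1_{\mathcal{C}_Q}(X,X)\cong\Ext^1_{\Bbbk Q}(X,X)\oplus D\Ext^1_{\Bbbk Q}(X,X)$), extend $\tilde S$ to a cluster tilting object, and transfer through the AIR bijection \cite[Theorem 4.1]{AIR14} to conclude $S$ is $\tau$-rigid; the Auslander--Reiten formula then gives $\Ext^1_A(S,S)=0$. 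The ``delicate step'' you flag is fine: since $\tilde S\notin\add T[1]$, the AIR bijection places $S$ in the module part, not the shifted-projective part, of the resulting $\tau$-tilting pair. Your argument is arguably sharper in that it proves every indecomposable $A$-module (not only every brick) is $\tau$-rigid, which is a well-known structural property of cluster tilted algebras of Dynkin type; the paper's argument stays closer to its overarching framework of $\tau$-tilting reduction and Ext-quiver control via Proposition \ref{prop:DIRRT-thm412}. Both are valid proofs of $d_b=0$, after which Theorem \ref{thm:ul_bound} finishes the argument identically.
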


\begin{proof}
Fix a cluster tilting object $T$ in $\mathcal{C}_{Q}$.
By \cite[Proposition 2.1]{BMR07}, the functor $F:=\Hom_{\mathcal{C}_{Q}}(T,-):\mathcal{C}_{Q}\rightarrow \mod A$ is full.
Let $U$ be a cluster tilting object in $\mathcal{C}_{Q}$.
Decompose $U$ as $U=U'\oplus U''$, where $U''$ is a maximal direct summand of $U$ which belongs to $\add(T[1])$.
By \cite[Theorem 4.1]{AIR14}, the assignment $U\mapsto (F(U),F(U''[-1]))$ gives a bijection from the set of isomorphism classes of basic cluster tilting objects in $\mathcal{C}_{Q}$ to the set of isomorphism classes of basic $\tau$-tilting pairs for $A$. 
Since $Q$ is a Dynkin quiver, the cluster category $\mathcal{C}_{Q}$ is of finite type, 
and hence the set of isomorphism classes of basic cluster tilting objects in $\mathcal{C}_{Q}$ is finite.
This implies that $A$ is $\tau$-tilting finite.

Let $(M,P)$ be a $\tau$-rigid pair, $(M^{+},P)$ its Bongartz completion, and $A(M,P)$ its $\tau$-tilting reduction.
Then there exists a cluster tilting object $U$ in $\mathcal{C}_{Q}$ such that $(M^{+},P)=(F(U),F(U''[-1]))$.
Since the functor $F$ is full, $A(M,P)$ is a factor algebra of $\End_{C_{Q}}(U)$. 
By \cite[Corollary 6.15]{BMRRT06}, $\End_{C_{Q}}(U)$ has no loops, and so does $A(M,P)$.
Thus it follows from Proposition \ref{prop:DIRRT-thm412} that 
\begin{align}
d_{b}:=\max\{\dim_{\Bbbk}\Ext_{A}^{1}(S,S)\mid S\in \brick(A)\}=0.\notag
\end{align}
By Theorem \ref{thm:ul_bound}, we have the assertion.
\end{proof}

Using the following result in \cite{EJR18}, we give a generalization of \cite[Theorem 4.1]{CC24}.
Let $Z(A)$ be the center of $A$ and $J(A)$ the Jacobson radical of $A$.

\begin{proposition}[{\cite[Theorem 11]{EJR18}}]\label{EJR:thm11}
Let $A$ be an arbitrary algebra and let $r\in Z(A) \cap J(A)$.
Then there exists a poset isomorphism 
\begin{align}
\sttilt(A) \rightarrow \sttilt(A/\langle r\rangle),\notag
\end{align}
where $\langle r \rangle$ is the two-sided ideal of $A$ generated by $r$.
In particular, if $A$ is $\tau$-tilting finite, then $\FPdim(\sttilt(A))=\FPdim(\sttilt(A/\langle r\rangle))$.
\end{proposition}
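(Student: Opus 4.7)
The plan is to identify the lattices $\sttilt(A)$ and $\sttilt(A/\langle r\rangle)$ via a bijection that originates from the observation that bricks (and hence semibricks) of $A$ coincide with those of $A/\langle r\rangle$, and then to check that this bijection preserves the partial order.

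First I would prove the key lemma: \emph{every brick $S\in\brick(A)$ is annihilated by $r$.} Since $r\in Z(A)$, right multiplication by $r$ defines an $A$-module endomorphism $\varphi\in\End_{A}(S)$. The brick condition makes $\End_{A}(S)$ a division ring, while $r\in J(A)$ is nilpotent in the finite-dimensional algebra $A$; hence $\varphi$ is a nilpotent element of a division ring, and must vanish. The converse inclusion is immediate: any brick of $A/\langle r\rangle$ is a brick of $A$ because $A$-homomorphisms between modules annihilated by $r$ coincide with $A/\langle r\rangle$-homomorphisms. Therefore $\brick(A)=\brick(A/\langle r\rangle)$ and $\sbrick(A)=\sbrick(A/\langle r\rangle)$.

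In the $\tau$-tilting finite case---which is the only case needed for the $\FPdim$ equality---$A/\langle r\rangle$ is likewise $\tau$-tilting finite as a factor algebra, by Remark \ref{rem:tfin-quiver-nomult}(2). Applying Asai's bijection $\sttilt\leftrightarrow\sbrick$ of Remark \ref{rem:tfin-quiver-nomult}(1) to both algebras yields a bijection of sets $\Phi\colon \sttilt(A)\to\sttilt(A/\langle r\rangle)$.

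The main obstacle is promoting $\Phi$ to a poset isomorphism. My approach would be to argue via the Hasse quiver: each covering relation in $\sttilt$ comes from a single mutation whose label is a brick (extractable via the $\tau$-reduction of Proposition \ref{prop:jred} and the wide-subcategory description of Proposition \ref{prop:DIRRT-thm412}), and this brick lives in the common set $\brick(A)=\brick(A/\langle r\rangle)$. Combined with Proposition \ref{prop:kase35min}, which locally recovers joins and meets of $\sttilt$ from the Hasse quiver, an inductive descent from the maximum element $(A,0)$ should match covering relations on the two sides under $\Phi$, giving the required isomorphism. The concluding identity $\FPdim(\sttilt(A))=\FPdim(\sttilt(A/\langle r\rangle))$ is then immediate, since $\FPdim$ of a finite lattice depends only on its Hasse quiver.
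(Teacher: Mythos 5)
The paper does not supply its own proof of this proposition: the poset isomorphism is taken verbatim from \cite[Theorem~11]{EJR18}, and the only contribution here is the immediate observation that $\FPdim$ of a finite lattice depends only on its poset structure. Your proposal instead tries to reconstruct the poset isomorphism from scratch, so there is no paper proof to compare against line by line.

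Your brick-annihilation lemma is correct and well argued: right multiplication by the central nilpotent $r$ is a nilpotent $A$-endomorphism of any brick $S$, hence zero in the division ring $\End_A(S)$, so $\brick(A)=\brick(A/\langle r\rangle)$. This is a genuinely useful observation and is, in fact, already used elsewhere in the paper (it is precisely the content of Proposition~\ref{prop:brick-selfext}(1)). Combining it with Asai's bijection does produce a set bijection $\Phi\colon\sttilt(A)\to\sttilt(A/\langle r\rangle)$ in the $\tau$-tilting finite case.

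The genuine gap is in promoting $\Phi$ to a poset isomorphism. Your sketch asserts that an ``inductive descent from the maximum element should match covering relations,'' but it never explains why Asai's semibrick bijection is compatible with the Hasse quivers of the two lattices, nor why the brick labeling of a given cover over $A$ agrees with that over $A/\langle r\rangle$. That compatibility is essentially equivalent to the statement being proved, so the argument as given risks circularity: you cannot use the matched brick-labeled Hasse quivers to conclude the posets coincide if producing that matching already requires the conclusion. Note also that the $\tau$-tilting modules themselves are \emph{not} preserved (e.g.\ $A$ itself is not killed by $r$), so one cannot simply identify the torsion classes inside $\mod A$. A self-contained proof, as in \cite{EJR18}, works directly with $\tau$-rigid pairs via the explicit assignment $(M,P)\mapsto (M\otimes_A A/\langle r\rangle,\, P\otimes_A A/\langle r\rangle)$, verifying $\tau$-rigidity, completeness, and the order relation by hand; that route sidesteps semibricks entirely. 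Finally, you restrict to the $\tau$-tilting finite case whereas the proposition is stated for arbitrary $A$; you flag this explicitly and it suffices for the $\FPdim$ corollary, but it means your proposal proves strictly less than the stated result.
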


Under a certain condition for $r\in Z(A)\cap J(A)$, we study a relationship between $\FPdim(A)$ and $\FPdim(A/\langle r\rangle)$. In the following, we fix a complete set $\{ e_{i}\mid i\in \Lambda\}$ of primitive orthogonal idempotents of $A$.

\begin{proposition}\label{prop:brick-selfext}
Let $r\in Z(A)\cap J(A)$ and $B:=A/\langle r \rangle$.
Then the following statements hold.
\begin{itemize}
\item[(1)] $\brick(A) = \brick(B)$
\item[(2)] Assume that $re_{i}=0$ holds for some $i\in \Lambda$.
If $S$ is a brick in $A$ with $Se_{i}\neq 0$, then $\Ext_{A}^{1}(S,S)=\Ext_{B}^{1}(S,S)$ holds.
\end{itemize}
\end{proposition}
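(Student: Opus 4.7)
For part (1), I would use Schur's lemma together with Nakayama's lemma. Since $r \in Z(A)$, right multiplication by $r$ gives an $A$-endomorphism $\mu_r \colon S \to S$, $s \mapsto sr$, for any $A$-module $S$. If $S$ is a brick, $\End_A(S)$ is a division ring, so $\mu_r$ is either zero or invertible. But $r \in J(A)$ forces $Sr \subseteq S \cdot J(A) = \rad S \subsetneq S$ by Nakayama's lemma, so $\mu_r$ is not surjective and must therefore be zero. Thus $Sr = 0$, meaning $S$ factors through $B = A/\langle r \rangle$, and $\End_A(S) = \End_B(S)$ remains a division ring, so $S \in \brick(B)$. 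The reverse inclusion $\brick(B) \subseteq \brick(A)$ follows from the natural full embedding $\mod B \hookrightarrow \mod A$.

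For part (2), the inclusion $\Ext_B^1(S,S) \subseteq \Ext_A^1(S,S)$ is immediate, so the work is in the reverse direction. I would take an $A$-extension $0 \to S \xrightarrow{\iota} E \xrightarrow{\pi} S \to 0$ and show that $Er = 0$, so that $E \in \mod B$ and the class lives in $\Ext_B^1(S,S)$. Centrality of $r$ makes $\mu_r \colon E \to E$, $e \mapsto er$, an $A$-endomorphism; by part (1) it vanishes on $\iota(S)$, so it factors uniquely through $\pi$ as $\mu_r = \phi \circ \pi$ for some $A$-linear $\phi \colon S \to E$. A short diagram chase shows that $\pi \circ \phi \colon S \to S$ equals multiplication by $r$ on $S$, which is zero by part (1); hence the image of $\phi$ lies in $\iota(S)$, so $\phi = \iota \circ \bar\phi$ for a unique $\bar\phi \in \End_A(S)$. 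Because $S$ is a brick, $\bar\phi$ is either zero or invertible.

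To eliminate the invertible case I would invoke the hypothesis $re_i = 0$ together with $Se_i \neq 0$. Centrality gives $e_i r = 0$, so for any $e \in E$ we have $(ee_i) r = e(e_i r) = 0$; consequently $\mu_r$ vanishes on $Ee_i$. Since $\pi(Ee_i) = Se_i$, the factorization $\mu_r = \iota \circ \bar\phi \circ \pi$ forces $\bar\phi(Se_i) = 0$. But $\bar\phi$ is $A$-linear, so an invertible $\bar\phi$ would satisfy $\bar\phi(Se_i) = Se_i$, contradicting $Se_i \neq 0$. Hence $\bar\phi = 0$, so $\mu_r = 0$, i.e., $Er = 0$, as required. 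The main subtlety lies in the factorization step of part (2) — specifically the computation that $\pi \circ \phi$ equals $r$-multiplication on $S$ — which is where centrality of $r$ is used a second time and where the idempotent hypothesis first becomes usable; once $\bar\phi \in \End_A(S)$ is in hand, the Schur-type dichotomy finishes the argument in one line.
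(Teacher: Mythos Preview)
Your proof is correct and follows essentially the same approach as the paper. For (1), the paper simply cites \cite[Proposition 2.29]{As20}, whereas you give the direct argument via Schur's lemma and Nakayama's lemma; for (2), both proofs show that the middle term of an arbitrary $A$-extension is annihilated by $r$ by factoring the multiplication-by-$r$ map through an endomorphism of the brick $S$ and using $re_i=0$, $Se_i\neq 0$ to rule out invertibility---the paper phrases this in terms of the image $Mr\subseteq S$ and an epimorphism $h\colon S\to Mr$, while you package it as a decomposition $\mu_r=\iota\circ\bar\phi\circ\pi$, but the content is identical.
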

\begin{proof}
Since $B$ is a factor algebra of $A$, we regard $\mod B$ as the full subcategory of $\mod A$.
Thus we have $\Hom_{B}(M,N)=\Hom_{A}(M,N)$ and $\Ext_{B}^{1}(M,N)\subseteq \Ext_{A}^{1}(M,N)$ for $M,N\in \mod B$.

(1) This follows from \cite[Proposition 2.29]{As20}. 

(2) Let $S$ be a brick in $\mod A$. By (1), it is a $B$-module, that is $Sr=0$.
We show that $\Ext_{A}^{1}(S,S)\subseteq \Ext_{B}^{1}(S,S)$.
Consider a short exact sequence 
\begin{align}
0\rightarrow S \xrightarrow{f} M\xrightarrow{g} S\rightarrow 0 \notag
\end{align}
in $\mod A$.
It is enough to show that $M$ is a $B$-module or equivalently, $Mr=0$.
By $g(Mr)=g(M)r\subset Sr=0$, we have $Mr\subset \ker g= \im f$.
Thus we can identify $Mr$ with a submodule of $S$.
By $f(S)r=f(Sr)=0$, the universality of cokernels yields an epimorphism $h: S\rightarrow Mr$.
Then $h$ is not an isomorphism. 
Indeed, if $h$ is an isomorphism, then it induces an isomorphism $Se_{i} \simeq (Mr)e_{i}$ .
By $Se_{i}\neq 0$ and $Mre_{i}=0$, this is a contradiction.
Since $S$ is a brick, the morphism $S\xrightarrow{h}Mr\subset S$ is zero.
Thus we obtain $h=0$, and hence $Mr=0$.
\end{proof}

As an immediate result, self-extensions of almost all bricks in $\mod A$ can be controlled by bricks in $\mod(A/\langle r\rangle)$ for $r\in Z(A)\cap J(e_{j}Ae_{j})$.

\begin{corollary}\label{cor:brick-ext}
Let $r\in Z(A)\cap J(e_{j}Ae_{j})$ for some $j\in \Lambda$ and $B:=A/\langle r\rangle$.
Then $\Ext_{A}^{1}(S,S)=\Ext_{B}^{1}(S,S)$ holds for all $S\in \brick(A)\setminus\{ S(j)\}$.
\end{corollary}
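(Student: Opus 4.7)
The plan is to reduce to Proposition \ref{prop:brick-selfext}(2): since $r=e_{j}re_{j}$ (as $r\in J(e_{j}Ae_{j})\subseteq e_{j}Ae_{j}$), we have $re_{i}=0$ for every $i\neq j$, so once we locate an $i\neq j$ with $Se_{i}\neq 0$, Proposition \ref{prop:brick-selfext}(2) yields $\Ext_{A}^{1}(S,S)=\Ext_{B}^{1}(S,S)$. Hence the task reduces to ruling out the possibility $S=Se_{j}$.

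Suppose, for contradiction, that $S\in\brick(A)\setminus\{S(j)\}$ satisfies $S=Se_{j}$. Then every composition factor of $S$ equals $S(j)$, and a direct calculation shows $s\cdot a=s\cdot(e_{j}ae_{j})$ for all $s\in S$ and $a\in A$ (the pieces $s\cdot e_{j}ae_{i}$ for $i\neq j$ lie in $Se_{i}=0$, and the contributions from $(1-e_{j})$ on the left vanish via $s(1-e_{j})=0$). This identity both endows $S$ with a natural right $e_{j}Ae_{j}$-module structure and forces every $e_{j}Ae_{j}$-linear endomorphism of $S$ to be $A$-linear. Hence $\End_{e_{j}Ae_{j}}(S)=\End_{A}(S)=\Bbbk$, so $S$ becomes a brick over the basic local algebra $e_{j}Ae_{j}$.

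The main obstacle is the general fact, not explicitly in the paper, that over a basic finite-dimensional local $\Bbbk$-algebra $C$ the unique simple module is the only brick. I would prove this in two steps. First, composing $M\twoheadrightarrow\top M$ with any $\Bbbk$-linear map $\top M\to\soc M$ (automatically $C$-linear, since $C$ acts on both through $C/J(C)=\Bbbk$) and the inclusion $\soc M\hookrightarrow M$ injects $\Hom_{C}(\top M,\soc M)$ into $\End_{C}(M)$; applied to a brick $M$ this forces $\top M$ and $\soc M$ to both be one-dimensional, so $M\cong C/I$ is cyclic. Second, using $\End_{C}(C/I)=\{c\in C:cI\subseteq I\}/I$ together with $\End_{C}(M)=\Bbbk$ gives $\{c\in C:cI\subseteq I\}=\Bbbk+I$; picking a representative $s=\lambda+i$ of the one-dimensional socle $(I:_{C}J(C))/I$ with $\lambda\in\Bbbk^{\times}$ and $i\in I$, the inclusion $sJ(C)\subseteq I$ combined with $iJ(C)\subseteq I$ simplifies to $\lambda J(C)\subseteq I$, which forces $J(C)\subseteq I$ and hence $I=J(C)$.

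Applying this fact to $S$ as a brick over $e_{j}Ae_{j}$ gives $\dim_{\Bbbk}S=1$; tracing the $A$-action back via $s\cdot a=s\cdot(e_{j}ae_{j})$ identifies $S$ with $S(j)$, contradicting $S\neq S(j)$. Therefore some $Se_{i}$ with $i\neq j$ is nonzero, and Proposition \ref{prop:brick-selfext}(2) applied to this $i$ delivers the desired equality $\Ext_{A}^{1}(S,S)=\Ext_{B}^{1}(S,S)$.
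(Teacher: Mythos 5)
Your proof is correct and follows the same overall route as the paper's: reduce to Proposition~\ref{prop:brick-selfext}(2) by producing some $i\neq j$ with $Se_{i}\neq 0$. The paper simply asserts the existence of such an $i$ without justification, whereas you actually prove it. Your argument — that if $S=Se_j$ then $S$ becomes a brick over the basic local algebra $e_jAe_j$, and that over a basic finite-dimensional local $\Bbbk$-algebra the unique simple module is the only brick (via $\dim\top M=\dim\soc M=1$, cyclicity, and the idealizer computation forcing $I=J(C)$) — is sound and fills in a genuinely non-trivial gap in the published one-line proof. One small stylistic point: the fact that $e_jAe_j$ is local with residue field $\Bbbk$ uses that $A$ is basic and that $J(e_jAe_j)=e_jJ(A)e_j$; you rely on this implicitly when you write $C/J(C)=\Bbbk$, and it might be worth spelling out. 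Otherwise this is a careful and correct elaboration of the paper's argument.
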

\begin{proof}
Let $S\in \brick A \setminus\{ S(j)\}$.
Then there exists $i\in \Lambda$ such that $i\neq j$ and $Se_{i}\neq 0$.
By $i\neq j$, we have $re_{i}=0$.
Thus the assertion follows from Proposition \ref{prop:brick-selfext}.
\end{proof}

The following results give a generalization of \cite[Theorem 4.1]{CC24}.

\begin{proposition}\label{prop:fp-reduction}
Assume that $A$ is $\tau$-tilting finite.
Let $r\in Z(A)\cap J(e_{j}Ae_{j})$ for some $j\in \Lambda$ and $B:=A/\langle r\rangle$.
If $\Ext_{B}^{1}(S,S)=0$ holds for all $S\in \brick(B)$, then we have
\begin{align}
\max\{ \FPdim(B),d \}\leq \FPdim(A)\leq \FPdim(B)+d, \notag
\end{align}
where $d:=\dim_{\Bbbk}\Ext_{A}^{1}(S(j),S(j))$.
In particular, if $\FPdim(B)=0$, then $\FPdim(A)=d$.
\end{proposition}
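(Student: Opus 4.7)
The plan is to combine Theorem \ref{thm:ul_bound} applied separately to $A$ and to $B$, using Proposition \ref{EJR:thm11} to identify their $\tau$-tilting lattices, and using Proposition \ref{prop:brick-selfext} together with Corollary \ref{cor:brick-ext} to control self-extensions of bricks.

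First I would observe that $B$ is also $\tau$-tilting finite: by Proposition \ref{EJR:thm11}, which applies since $r\in Z(A)\cap J(A)$, the posets $\sttilt(A)$ and $\sttilt(B)$ are isomorphic, and in particular
\begin{equation}
\FPdim(\sttilt(A))=\FPdim(\sttilt(B)).\notag
\end{equation}
Next I would compute the brick-self-extension constant $d_{b}^{A}:=\max\{\dim_{\Bbbk}\Ext_{A}^{1}(S,S)\mid S\in\brick(A)\}$ attached to $A$. By Proposition \ref{prop:brick-selfext}(1), $\brick(A)=\brick(B)$. For any $S\in\brick(A)\setminus\{S(j)\}$, Corollary \ref{cor:brick-ext} gives $\Ext_{A}^{1}(S,S)=\Ext_{B}^{1}(S,S)$, which vanishes by hypothesis. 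For $S=S(j)$, the definition gives $\dim_{\Bbbk}\Ext_{A}^{1}(S(j),S(j))=d$. Consequently $d_{b}^{A}=d$.

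Now I would apply Theorem \ref{thm:ul_bound} to $B$: since $\Ext_{B}^{1}(S,S)=0$ for every brick in $\mod B$, the associated constant $d_{b}^{B}$ is zero, so
\begin{equation}
\FPdim(B)=\FPdim(\sttilt(B)).\notag
\end{equation}
Applying Theorem \ref{thm:ul_bound} to $A$ with $d_{b}^{A}=d$ then yields
\begin{equation}
\max\{\FPdim(\sttilt(A)),d\}\leq \FPdim(A)\leq \FPdim(\sttilt(A))+d.\notag
\end{equation}
Substituting $\FPdim(\sttilt(A))=\FPdim(\sttilt(B))=\FPdim(B)$ gives the desired inequality. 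The final clause follows by setting $\FPdim(B)=0$, which forces both bounds to coincide at $d$.

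There is no real obstacle: the argument is an assembly of previously established results. The only point requiring a little care is confirming that the hypothesis, stated for bricks of $B$, actually controls the self-extensions of all bricks of $A$ except possibly $S(j)$; this is exactly what Corollary \ref{cor:brick-ext} provides, and it is the single structural ingredient that makes the reduction work.
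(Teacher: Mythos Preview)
Your proof is correct and follows essentially the same approach as the paper: apply Theorem \ref{thm:ul_bound} to $A$, use Proposition \ref{EJR:thm11} to identify $\FPdim(\sttilt(A))$ with $\FPdim(\sttilt(B))$, apply Theorem \ref{thm:ul_bound} to $B$ (where the hypothesis forces $d_b^B=0$) to get $\FPdim(B)=\FPdim(\sttilt(B))$, and use Corollary \ref{cor:brick-ext} to show $d_b^A=d$. The order of presentation differs slightly, but the ingredients and their roles are identical.
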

\begin{proof}
By Theorem \ref{thm:ul_bound}, we have
\begin{align}
\max\{\FPdim(\sttilt(A)), d_{b}\}\leq \FPdim(A)\leq \FPdim(\sttilt(A))+d_{b}. \notag
\end{align}
Since $r$ is in $Z(A)\cap J(A)$, it follows from Proposition \ref{EJR:thm11} that $\FPdim(\sttilt(A))=\FPdim(\sttilt(B))$.
Since each brick in $\mod B$ has no non-trivial self-extension, we have $\FPdim(\sttilt(B))=\FPdim(B)$ by Theorem \ref{thm:ul_bound}.
On the other hand, by Corollary \ref{cor:brick-ext}, we have $\Ext_{A}^{1}(S,S)=\Ext_{B}^{1}(S,S)=0$ for all $S\in \brick (A)\setminus\{ S(j)\}$. 
This implies that $d_{b}=d$.
The proof is complete.
\end{proof}

We immediately obtain the following result.

\begin{corollary}\label{cor:seq-fpdim}
Let $(A,B)$ be a pair of $\tau$-tilting finite algebras so that
there exists a finite sequence of ($\tau$-tilting finite) algebras
\begin{center}
$A=:A_{0}$, $A_{1}:=A_{0}/\langle r_{0}\rangle$, $\ldots$, $A_{\ell}:=A_{\ell-1}/\langle r_{\ell-1}\rangle\cong B$,
\end{center}
where $r_{i}\in Z(A_{i})\cap J(e_{j_{i}}A_{i}e_{j_{i}})$ for some $j_{i}\in \Lambda$. 
If $\Ext_{B}^{1}(S,S)=0$ holds for all $S\in \brick(B)$, then we have 
\begin{align}
\max\{ \FPdim(B), d\}\leq \FPdim(A)\leq \FPdim(B)+d, \notag
\end{align}
where $d:=\max\{ \dim_{\Bbbk}\Ext_{A}^{1}(S(j_{i}),S(j_{i}))\mid i\in\{ 0,1,\dots,\ell-1 \}\}$.
In particular, if $\FPdim(B)=0$, then $\FPdim(A)=d$.
\end{corollary}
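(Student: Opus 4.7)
The plan is to iterate the two key ingredients---Proposition \ref{EJR:thm11} (lattice invariance under central radical reduction) and Corollary \ref{cor:brick-ext} (Ext-invariance away from an exceptional simple)---along the chain $A = A_0, A_1, \ldots, A_\ell = B$, thereby reducing the whole assertion to a single application of Theorem \ref{thm:ul_bound} at $A$. Concretely, the target identities are $\FPdim(\sttilt(A)) = \FPdim(B)$ and $d_b = d$, where $d_b := \max\{\dim_\Bbbk \Ext_A^1(S,S) \mid S \in \brick(A)\}$; the bounds then drop out of that theorem in one line.

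For the first identity, I would note that $r_i \in Z(A_i) \cap J(e_{j_i} A_i e_{j_i}) \subseteq Z(A_i) \cap J(A_i)$ (since $J(e_{j_i}A_ie_{j_i}) = e_{j_i}J(A_i)e_{j_i} \subseteq J(A_i)$ for a basic algebra), so Proposition \ref{EJR:thm11} applies at every step and yields $\FPdim(\sttilt(A)) = \FPdim(\sttilt(A_1)) = \cdots = \FPdim(\sttilt(B))$. Since every brick in $\mod B$ has no non-trivial self-extension by hypothesis, Theorem \ref{thm:ul_bound} applied to $B$ gives $\FPdim(\sttilt(B)) = \FPdim(B)$, and the chain of equalities closes.

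For the second identity, I would iterate Proposition \ref{prop:brick-selfext}(1) to identify $\brick(A) = \brick(A_1) = \cdots = \brick(B)$ as subsets of $\mod A$. Given $S \in \brick(A)$ with $S \notin \{S(j_0), \ldots, S(j_{\ell-1})\}$, Corollary \ref{cor:brick-ext} at stage $i$ yields $\Ext_{A_i}^1(S,S) = \Ext_{A_{i+1}}^1(S,S)$; chaining across $i=0,\ldots,\ell-1$ gives $\Ext_A^1(S,S) = \Ext_B^1(S,S) = 0$. Consequently $d_b$ is already attained on the finite set $\{S(j_0), \ldots, S(j_{\ell-1})\}$, forcing $d_b = d$. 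Plugging the two identities into Theorem \ref{thm:ul_bound} at $A$ delivers the sandwich $\max\{\FPdim(B),d\} \le \FPdim(A) \le \FPdim(B) + d$, and the \emph{in particular} statement is immediate.

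The main obstacle is purely bookkeeping: $S(j_i)$ a priori lives in $\mod A_i$, and one must verify it is well-defined as the same $A$-module at every stage of the iterated Ext-comparison. This is handled by choosing the primitive orthogonal idempotents of each $A_i$ to be the images of the fixed idempotents $\{e_k \mid k \in \Lambda\}$ of $A$ under the surjection $A \twoheadrightarrow A_i$; with this convention the labels $j_i \in \Lambda$ are consistent along the chain, $S(j_i)$ denotes the same simple module throughout via the full inclusions $\mod B \subseteq \cdots \subseteq \mod A_1 \subseteq \mod A$, and the assumption $r_i \in e_{j_i} A_i e_{j_i}$ provides the vanishing $r_i e_k = 0$ for $k \neq j_i$ needed to invoke Proposition \ref{prop:brick-selfext}(2) inside $A_i$.
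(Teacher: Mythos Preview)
Your proposal is correct and follows essentially the same approach as the paper: iterate Proposition~\ref{EJR:thm11} along the chain to get $\FPdim(\sttilt(A))=\FPdim(\sttilt(B))=\FPdim(B)$, iterate Corollary~\ref{cor:brick-ext} to force $\Ext_A^1(S,S)=0$ for all bricks outside $\{S(j_0),\ldots,S(j_{\ell-1})\}$ so that $d_b=d$, and then invoke Theorem~\ref{thm:ul_bound}. Your treatment is in fact more explicit than the paper's, which simply refers to ``an argument similar to the proof of Proposition~\ref{prop:fp-reduction}''; in particular, your final paragraph on the idempotent bookkeeping spells out a point the paper leaves implicit.
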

\begin{proof}
By an argument similar to the proof in Proposition \ref{prop:fp-reduction}, we have
\begin{align}
\FPdim(\sttilt(A_{0}))=\FPdim(\sttilt(A_{1}))=\cdots =\FPdim(\sttilt(A_{\ell}))\notag
\end{align}
and 
\begin{align}
\Ext_{A_{0}}^{1}(S,S)=\Ext_{A_{1}}^{1}(S,S)=\cdots=\Ext_{A_{\ell}}^{1}(S,S)=0\notag
\end{align}
for all $S\in \brick(A) \setminus\{ S(j_{i})\mid i\in \{0,1,\dots,\ell-1\}\}$.
Thus the assertion follows from Theorem \ref{thm:ul_bound}.
\end{proof}

We give an example of $\tau$-tilting finite algebras with Frobenius--Perron dimension zero.
Let $A$ be an algebra.
A \emph{path} in $\mod A$ is a sequence 
\begin{align}
M_{0}\xrightarrow{f_{1}}M_{1}\xrightarrow{f_{2}}M_{2}\rightarrow \cdots \rightarrow M_{t-1}\xrightarrow{f_{t}}M_{t}\notag
\end{align}
of non-zero non-isomorphisms $f_{1},\ldots, f_{t}$ between indecomposable modules $M_{0},\ldots, M_{t}$ with $t\geq 1$.
Furthermore, if $M_{0}$ is isomorphic to $M_{t}$, then the path in $\mod A$ is called a \emph{cycle}.  
We call $A$ a \emph{representation-directed algebra} if there exists no cycle in $\mod A$. 
Note that all representation-directed algebras are of finite representation type (for example, see \cite[IX.3.4 Corollary]{ASS06}), and hence $\tau$-tilting finite.
It is known that representation-directed algebras have the Frobenius--Perron dimension zero (\cite[Corollary 4.3]{CC23}). 

\begin{proposition}\label{prop:cc23-cor4.3}
Let $A$ be a representation-directed algebra.
Then the following statements hold.
\begin{itemize}
\item[(1)] The Gabriel quiver $Q$ of $A$ is acyclic. In particular, $\rho(Q)=0$.
\item[(2)] If there exists a fully faithful functor $\mod B\rightarrow \mod A$, then the algebra $B$ is also representation-directed.
\item[(3)] The Frobenius--Perron dimension of a factor algebra of $A$ is zero.   
\end{itemize}
\end{proposition}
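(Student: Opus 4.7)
My plan is to establish (1) by contrapositive, obtain (2) by a direct translation of cycles through a fully faithful functor, and then deduce (3) by combining (2) with the result of \cite[Corollary 4.3]{CC23} recalled immediately before the proposition.

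For (1), I would suppose that the Gabriel quiver $Q$ of $A$ contains an oriented cycle $i_{0}\to i_{1}\to\cdots\to i_{t}=i_{0}$ (allowing $t=1$, so that a loop counts). Each arrow $i_{k-1}\to i_{k}$ corresponds to a nonzero element of the radical piece $e_{i_{k-1}}\rad(A)e_{i_{k}}$, and hence to a nonzero morphism between the indecomposable projectives $P(i_{k-1})$ and $P(i_{k})$; this morphism is automatically a non-isomorphism, either because $P(i_{k-1})\not\cong P(i_{k})$ or, in the loop case, because it lies in $\rad\End_{A}(P(i_{k-1}))$. Composing these morphisms yields a path in $\mod A$ whose source and target are isomorphic, i.e.\ a cycle, contradicting representation-directedness. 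Hence $Q$ is acyclic, and Lemma \ref{lem:spectrad-subquiver}(2) gives $\rho(Q)=0$.

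For (2), let $F\colon\mod B\to\mod A$ be fully faithful and suppose toward contradiction that $\mod B$ admits a cycle $M_{0}\xrightarrow{f_{1}}M_{1}\xrightarrow{f_{2}}\cdots\xrightarrow{f_{t}}M_{t}$ with $M_{0}\cong M_{t}$. Faithfulness of $F$ keeps each $F(f_{k})$ nonzero; full-faithfulness yields ring isomorphisms $\End_{B}(M_{k})\cong\End_{A}(F(M_{k}))$, and since Krull--Schmidt characterizes indecomposables by locality of the endomorphism ring, each $F(M_{k})$ is indecomposable. Moreover, $f_{k}$ is an isomorphism if and only if $F(f_{k})$ is, so each $F(f_{k})$ remains a non-isomorphism, while $F(M_{0})\cong F(M_{t})$ is automatic. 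Thus $F(M_{0})\to\cdots\to F(M_{t})$ is a cycle in $\mod A$, contradicting the representation-directedness of $A$.

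For (3), any factor algebra $B$ of $A$ embeds as a full subcategory $\mod B\hookrightarrow\mod A$ via restriction of scalars along $A\twoheadrightarrow B$, and this embedding is fully faithful; by (2), $B$ is then representation-directed, and \cite[Corollary 4.3]{CC23} yields $\FPdim(B)=0$. One may equally well invoke Lemma \ref{lem:fpdim-facalg} together with $\FPdim(A)=0$. The only real subtlety of the proof lies in part (2): one must verify that a fully faithful functor simultaneously preserves nonzeroness, non-invertibility, and indecomposability of source and target, so that cycles transport along $F$; granting this, (1) and (3) are essentially formal.
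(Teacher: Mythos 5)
Your parts (1) and (2) coincide with the paper's argument: (1) takes the same contrapositive route, producing a cycle of indecomposable projectives from an oriented cycle in $Q$, and (2) transports cycles through a fully faithful functor (you spell out in more detail the preservation of indecomposability and non-invertibility, which the paper leaves implicit, but the idea is identical).

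Part (3) is where you and the paper genuinely diverge. You close the argument by citing \cite[Corollary 4.3]{CC23} for $\FPdim(B)=0$, but that citation is exactly the statement the paper is reproving here — the paper explicitly says ``For the convenience of the readers, we give a proof by $\tau$-tilting theory,'' so invoking the external result defeats the purpose and is circular if the goal is an independent proof. The paper's own route for (3) is: by Lemma~\ref{lem:fpdim-facalg} it suffices to show $\FPdim(A)=0$; take any finite semibrick $\mathcal S$ and pass to the corresponding wide subcategory $\mathcal W$ via Proposition~\ref{prop:ringel}; since representation-directed algebras are representation-finite and hence $\tau$-tilting finite, Proposition~\ref{prop:DIRRT-thm418} (together with Proposition~\ref{prop:DIRRT-thm412}) realizes $\mathcal W\cong\mod B$ for some algebra $B$, so the inclusion $\mathcal W\hookrightarrow\mod A$ gives a fully faithful functor $\mod B\to\mod A$; then (2) forces $B$ to be representation-directed, and (1) gives $\rho(Q_{\mathcal S})=\rho(Q_{\simp(B)})=0$, whence $\FPdim(A)=0$. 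You should replace the appeal to \cite[Corollary 4.3]{CC23} with this $\tau$-tilting-theoretic chain — in particular noticing that wide subcategories of a $\tau$-tilting finite algebra are themselves module categories, which is the key step your proposal omits.
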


For the convenience of the readers, we give a proof by $\tau$-tilting theory.

\begin{proof}
(1) Suppose to the contrary that the Gabriel quiver $Q$ of $A$ is not acyclic.
Then there exists a path $v_{1}\rightarrow v_{2}\rightarrow\cdots\rightarrow v_{j}$ in $Q_{A}$ such that $v_{1}=v_{j}$. This yields a cycle in $\mod A$ that is given by the corresponding indecomposable projective $A$-modules, a contradiction.
By Lemma \ref{lem:spectrad-subquiver}(2), we have $\rho(Q)=0$.

(2) If $B$ is not representation-directed, then $\mod B$ admits a cycle.
By the assumption, the cycle in $\mod B$ induces a cycle in $\mod A$.
Thus $A$ is not representation-directed.

(3) By Lemma \ref{lem:fpdim-facalg}, it is enough to check that $\FPdim(A)=0$.
Let $\mathcal{S}$ be a (finite) semibrick in $\mod A$ and $\mathcal{W}$ the corresponding wide subcategory by Proposition \ref{prop:ringel}.
Since $A$ is $\tau$-tilting finite, it follows from Proposition \ref{prop:DIRRT-thm418} that $\mathcal{W}$ can be realized as a module category of some algebra $B$, that is, $\mathcal{W}\cong \mod B$.
By (2), $B$ is also representation-directed.
By (1), we have $\rho(Q_{\mathcal{S}})=\rho(Q_{\simp(B)})=0$, and hence $\FPdim(A)=0$.
\end{proof}

We explain the relationship between Corollary \ref{cor:seq-fpdim} and \cite[Theorem 4.1]{CC24} below.

\begin{remark}
The pair $(A,B)$ appeared in \cite[Theorem 4.1]{CC24} satisfies the assumption in Corollary \ref{cor:seq-fpdim}. Indeed, if $A$ is a bound quiver algebra satisfying the commutativity condition of loops and $B$ is the loop-reduced algebra of $A$ (for definitions, see \cite[Definition 3.1]{CC24}), then we can check that there exists a finite sequence of algebras from $A$ to $B$ satisfying the condition in Corollary \ref{cor:seq-fpdim}.
Since representation-directed algebras are of finite representation type, $B$ is $\tau$-tilting finite, and hence so is $A$ by Proposition \ref{EJR:thm11}.
Furthermore, by Proposition \ref{prop:cc23-cor4.3}, the Frobenius--Perron dimensions of representation-directed algebras are always zero.
Thus \cite[Theorem 4.1]{CC24} is derived from Corollary \ref{cor:seq-fpdim}.
\end{remark}

The following example cannot be covered by \cite[Theorem 4.1]{CC24}.

\begin{example}
Consider the quiver
\begin{align}
\xymatrix{
&& 2 \ar[dr]_-{y} & \\
Q:&1\ar@(ul,ur)^-{a}\ar@(ld,lu)^-{b}\ar@(dr,dl)^-{c}\ar[ur]_-{x} \ar[dr]^-{z}& & 4. \\
&& 3 \ar[ur]^-{w} &
}\notag
\end{align}
Let $L$ be the two-sided ideal of $\Bbbk Q$ generated by all loops on $Q$ and let $I$ be the two-sided ideal of $\Bbbk Q$ generated by 
\begin{align}
a^{2}+bc-cb, ab, ac, ax, az, ba, b^{2}, bx,bz, ca, c^{2},cx,cz, xy-zw. \notag
\end{align}
Then $(\Bbbk Q/I,\Bbbk Q/L)$ satisfies the assumption in Corollary \ref{cor:seq-fpdim}.
Indeed, we have a sequence of algebras
\begin{align}
A_{0}=\Bbbk Q/I, A_{1}=A_{0}/\langle a \rangle, A_{2}=A_{1}/\langle b \rangle, A_{3}=A_{2}/\langle c \rangle\cong \Bbbk Q/L. \notag
\end{align}
Furthermore, we can check that $\Bbbk Q/L$ is representation-directed.
Thus we have
\begin{align}
\FPdim(\Bbbk Q/I)=3. \notag
\end{align}
Note that the two-sided ideal $I$ does not satisfy the commutativity condition of loops.
\end{example}

\section{Frobenius--Perron dimensions of $\tau$-tilting finite algebras of tame representation type}

In this section, we determine the upper bounds of the Frobenius--Perron dimensions of algebras of finite representation type and $\tau$-tilting finite algebras of tame representation type.

\begin{theorem}\label{thm:ub_rf}
The following statements hold.
\begin{itemize}
\item[(1)] If $A$ is an algebra of finite representation type, then we have $\FPdim(A)<2$.
\item[(2)] If $A$ is a $\tau$-tilting finite algebra of tame representation type, then we have $\FPdim(A)\leq 2$.
\end{itemize}
\end{theorem}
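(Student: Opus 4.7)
My plan is to combine Corollary~\ref{cor:Qs=QM} with the classical classification of radical-square-zero algebras by their separated quivers, since the bound of Theorem~\ref{thm:ul_bound} is too loose to reach $2$. By Corollary~\ref{cor:Qs=QM}, every Ext-quiver $Q_{\mathcal{S}}$ arising from a semibrick in $\mod A$ is the Gabriel quiver of some $\tau$-tilting reduction $B=A(M,P)$, so it suffices to show $\rho(Q_B)<2$ (resp.\ $\rho(Q_B)\leq 2$) uniformly over all such $B$ whenever $A$ is of finite (resp.\ tame) representation type.

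First I would verify that $B=A(M,P)$ inherits the representation type of $A$. By Proposition~\ref{prop:DIRRT-thm412}, $\mathcal{W}(M,P)\simeq \mod B$ is a fully faithful exact abelian subcategory of $\mod A$, so indecomposability and $\Hom/\Ext$-spaces are preserved; finite representation type therefore descends directly, and tameness descends because any representation embedding of $\mod\Bbbk\langle x,y\rangle$ into $\mod B$ would compose with the inclusion into $\mod A$ to produce a representation embedding contradicting tameness of $A$. Moreover $B$ is $\tau$-tilting finite by Proposition~\ref{prop:jred}. The theorem thus reduces to: for any $\tau$-tilting finite algebra $B$ of finite (resp.\ tame) representation type, $\rho(Q_B)<2$ (resp.\ $\rho(Q_B)\leq 2$).

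For this I would pass from $B$ to its radical-square-zero quotient $\overline B:=B/\rad^{2} B$, which inherits the representation type of $B$ (as a quotient) and has the same Gabriel quiver. By the classical theorem of Gabriel and Dlab--Ringel, the representation type of a radical-square-zero algebra agrees with that of the hereditary path algebra of its \emph{separated quiver} $Q_B^{s}$---the bipartite quiver on $Q_{B,0}\sqcup Q_{B,0}'$ obtained by replacing each arrow $a\colon i\to j$ of $Q_B$ by $i\to j'$---whose underlying graph $|Q_B^{s}|$ has adjacency matrix $\bigl(\begin{smallmatrix} 0 & M\\ M^{T} & 0 \end{smallmatrix}\bigr)$, where $M$ is the adjacency matrix of $Q_B$. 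Hence $\overline B$ is of finite (resp.\ tame) type exactly when $|Q_B^{s}|$ is a disjoint union of Dynkin (resp.\ Euclidean) diagrams, in which case $\rho(|Q_B^{s}|)<2$ (resp.\ $\leq 2$).

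The proof then closes via the elementary inequality $\rho(Q_B)=\rho(M)\leq \|M\|_{2}=\rho(|Q_B^{s}|)$, the last equality following since the eigenvalues of $\bigl(\begin{smallmatrix} 0 & M\\ M^{T} & 0 \end{smallmatrix}\bigr)$ are precisely $\pm$ the singular values of $M$; taking the supremum over all $(M,P)\in\trigidp(A)$ yields $\FPdim(A)<2$ (resp.\ $\FPdim(A)\leq 2$). The main technical obstacle I foresee is the inheritance step for tameness from $A$ to $A(M,P)$, which requires the transport of representation embeddings through the wide-subcategory equivalence; the remaining steps are a short matrix computation combined with the well-known hereditary classification.
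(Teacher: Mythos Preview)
Your proposal is correct and follows the same overall strategy as the paper: reduce via Corollary~\ref{cor:Qs=QM} to bounding $\rho(Q_B)$ for each $\tau$-tilting reduction $B=A(M,P)$, descend the representation type through the fully faithful exact embedding $\mod B\simeq\mathcal{W}(M,P)\hookrightarrow\mod A$ (the paper invokes \cite[XIX.1.11]{SS07} here), pass to $B/\rad^{2}B$, and use the separated quiver to transport the question to a hereditary path algebra. The paper makes the last transfer via the stable equivalence $\underline{\mod}(B/\rad^{2}B)\to\underline{\mod}\Bbbk Q_B^{s}$ of \cite[X.2.4]{ARS95} together with Krause's theorem \cite{Kr97} that stable equivalence preserves representation type, which is the precise form of the ``Gabriel--Dlab--Ringel'' step you cite.

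The one genuine divergence is in the final linear-algebra inequality. The paper proves $\rho(Q_B)\leq 2$ (strict for Dynkin) through Proposition~\ref{prop:sep-quiv}, which rests on the explicit quadratic-form computations of Lemma~\ref{lem:quad-form}---a case-by-case verification over all bipartite Dynkin and extended Dynkin diagrams that $q_\Delta$ is positive (semi)definite, amounting to showing $\|M\bm{x}\|\leq 2\|\bm{x}\|$ componentwise. Your singular-value argument $\rho(M)\leq\|M\|_{2}=\sigma_{\max}(M)=\rho(|Q_B^{s}|)$, together with Smith's classical bound $\rho(|Q_B^{s}|)<2$ (resp.\ $\leq 2$) for Dynkin (resp.\ Euclidean) graphs, yields the same conclusion in one line and entirely bypasses Lemma~\ref{lem:quad-form}. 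The paper's route is self-contained but lengthy; yours is shorter and conceptually cleaner, at the cost of importing Smith's theorem as a black box.
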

 
As shown in the following example, the upper bounds in Theorem \ref{thm:ub_rf} are best possible.

\begin{example}
It is known that Brauer graph algebras are of finite representation type or of tame representation type. 
Furthermore, a Brauer graph algebra is of finite representation type if and only if it is a Brauer tree algebra. For example, see \cite[Corollary 2.9]{Sc17}.
\begin{itemize}
\item[(1)] Let $BL_{n}$ be a Brauer line algebra with $n$ non-isomorphic simple modules.
Then $BL_{n}$ is of finite representation type and the Gabriel quiver $Q$ of $BL_{n}$ is given by \eqref{eq:An-dquiver}.
As will be shown in Proposition \ref{prop:spect-rad-dynkin}, we have 
\begin{align}
\FPdim(BL_{n})\geq \rho(Q)= 2\cos\left(\frac{\pi}{n+1}\right).\notag
\end{align}
This implies that $\displaystyle\lim_{n\rightarrow \infty}\FPdim(BL_{n})=2$.
\item[(2)] Consider the bound quiver algebra $A:=\Bbbk Q/I$, where
\begin{align}
Q=\xymatrix{1\ar@(ul,dl)_{a}\ar@(ur,dr)^{b}}\quad\text{and}\quad I=\langle a^{2}, b^{2}, ab-ba \rangle.\notag
\end{align}
Since $A$ is a Brauer graph algebra but not a Brauer tree algebra, it is of tame representation type. 
Furthermore, we have $\sttilt(A)=\{ A, 0 \}$, and hence $A$ is $\tau$-tilting finite. 
Then we can easily check that $\FPdim(A)=\rho(Q)=2$. 
\end{itemize}
\end{example}

In the following, we prove Theorem \ref{thm:ub_rf}.
A quiver $\Delta$ is said to be \emph{bipartite} if each vertex in $\Delta$ is a sink or a source. 
Let $\Delta_{0}^{+}$ be the set of sources in $\Delta$ and $\Delta_{0}^{-}$ the set of sinks in $\Delta$. 
For a vertex $i\in \Delta_{0}$, let $\mathrm{ds}(i)$ be the set of all direct successors of $i$ in $\Delta$ and $\mathrm{dp}(i)$ the set of all direct predecessors of $i$ in $\Delta$.
If $\Delta$ is bipartite, for each $i\in \Delta_{0}^{+}$ (respectively, $j\in \Delta_{0}^{-}$), we have $\mathrm{ds}(i)\subseteq \Delta_{0}^{-}$ and $\mathrm{dp}(i)=\emptyset$ (respectively, $\mathrm{ds}(j)=\emptyset$ and $\mathrm{dp}(j)\subseteq \Delta_{0}^{+}$).
For a bipartite quiver $\Delta$, we define a quadratic form $q_{\Delta}: \mathbb{R}^{\Delta_{0}^{-}}\to \mathbb{R}$ as 
\begin{align}
q_{\Delta}(\bm{x}):=4\sum_{j\in \Delta_{0}^-}x_j^2-\sum_{i\in \Delta_0^+}(\sum_{j\in \Delta_{0}^-}a_{ij}x_{j})^{2},\notag
\end{align}
where $a_{ij}$ is the number of arrows from $i$ to $j$.
Then $q_{\Delta}$ satisfies the following property, which plays an important role in the proof of Theorem \ref{thm:ub_rf}.

\begin{lemma}\label{lem:quad-form}
Let $\Delta$ be a bipartite quiver and $q_{\Delta}$ the quadratic form.
Then the following statements hold.
\begin{itemize}
\item[(1)] If the underlying graph of $\Delta$ is a simply-laced Dynkin diagram, then $q_{\Delta}$ is positive definite, that is, $q_{\Delta}(\bm{x})>0$ holds for all $\bm{x}\neq \bm{0}$.
\item[(2)] If the underlying graph of $\Delta$ is a simply-laced extended Dynkin diagram, then $q_{\Delta}$ is positive semidefinite, that is, $q_{\Delta}(\bm{x})\geq 0$ for all $\bm{x}$, but not positive definite.
\end{itemize}
\end{lemma}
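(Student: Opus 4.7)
The plan is to recognize $q_\Delta$ as essentially a restriction of the Tits-type form $4I-M^2$ associated with the adjacency matrix of the underlying bipartite graph, and then invoke the classical fact that the spectral radius of a simply-laced Dynkin (resp.\ extended Dynkin) diagram is strictly less than (resp.\ equal to) $2$.

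Concretely, let $A=(a_{ij})_{i\in\Delta_{0}^{+},\,j\in\Delta_{0}^{-}}$ denote the incidence matrix from sources to sinks, so that
\begin{align}
q_{\Delta}(\bm{x})=4\,\bm{x}^{T}\bm{x}-(A\bm{x})^{T}(A\bm{x})=\bm{x}^{T}(4I-A^{T}A)\bm{x}.\notag
\end{align}
Thus the first step is simply to rewrite $q_{\Delta}$ as above and observe that $q_{\Delta}$ is positive definite (resp.\ positive semidefinite) exactly when every eigenvalue of $A^{T}A$ is less than $4$ (resp.\ at most $4$), i.e.\ when $\rho(A^{T}A)<4$ (resp.\ $\leq 4$).

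Next, I would relate the spectrum of $A^{T}A$ to that of the adjacency matrix
\begin{align}
M:=\begin{pmatrix} 0 & A\\ A^{T} & 0\end{pmatrix}\notag
\end{align}
of the underlying undirected graph, which is symmetric with respect to the bipartition. Since
\begin{align}
M^{2}=\begin{pmatrix} AA^{T} & 0\\ 0 & A^{T}A\end{pmatrix},\notag
\end{align}
the nonzero eigenvalues of $A^{T}A$ coincide with the squares of the nonzero eigenvalues of $M$, and in particular $\rho(A^{T}A)=\rho(M)^{2}$. Therefore the positive (semi)definiteness of $q_{\Delta}$ is equivalent to $\rho(M)<2$ (resp.\ $\rho(M)\le 2$).

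The final step invokes the classical spectral theorem for simply-laced Dynkin and extended Dynkin diagrams (for instance Smith's theorem, or the determination of spectra via Coxeter numbers; see e.g.\ the references in the representation theory of quivers): the adjacency spectral radius of a connected simply-laced Dynkin diagram is of the form $2\cos(\pi/h)<2$ with $h$ the Coxeter number, while the extended Dynkin diagrams are precisely the connected graphs with spectral radius exactly $2$. The Perron--Frobenius eigenvector at eigenvalue $2$ for the extended case is the positive imaginary root $\delta$, which witnesses both that $\rho(M)=2$ and that $q_{\Delta}$ has a nontrivial kernel (so $q_{\Delta}$ is semidefinite but not definite). The main (essentially only) obstacle is this input from the spectral theory of Dynkin diagrams; once cited, the argument above converts it directly into the positive (semi)definiteness of $q_{\Delta}$.
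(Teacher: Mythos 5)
Your argument is correct, but it is a genuinely different route from the one the paper takes. You package $q_{\Delta}(\bm{x})=\bm{x}^{T}(4I-A^{T}A)\bm{x}$, pass to the bipartite adjacency matrix $M=\left(\begin{smallmatrix}0 & A\\ A^{T} & 0\end{smallmatrix}\right)$ via $\rho(A^{T}A)=\rho(M)^{2}$, and then invoke the classical spectral characterization (Smith's theorem) that connected simply-laced Dynkin diagrams have $\rho(M)=2\cos(\pi/h)<2$ while extended Dynkin diagrams have $\rho(M)=2$; the Perron eigenvector at eigenvalue $2$ restricted to $\Delta_{0}^{-}$ then supplies a nonzero null vector of $4I-A^{T}A$ (from $M\left(\begin{smallmatrix}\bm{u}\\\bm{v}\end{smallmatrix}\right)=2\left(\begin{smallmatrix}\bm{u}\\\bm{v}\end{smallmatrix}\right)$ one gets $A^{T}A\bm{v}=4\bm{v}$, and $\bm{v}\neq 0$ because $\bm{v}=0$ would force $\bm{u}=0$). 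The paper instead gives a self-contained, case-by-case completion-of-squares: for each bipartite orientation of each type it explicitly writes $q_{\Delta}$ as a sum of squares, and for the extended types exhibits a concrete nonzero vector where all the squares vanish. Your approach is shorter, more conceptual, and uniform over all types (including $\tilde{\mathsf{A}}_{1}$ with its double edge, which the paper has to treat separately), at the cost of importing a nontrivial external result that the paper chose not to cite; the paper's calculation, while longer, is fully elementary and doubles as a source of explicit null vectors. Both are valid proofs of the lemma.
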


We will give the proof of Lemma \ref{lem:quad-form} at the end of this section.

In the rest of this section, we assume that $A$ is $\tau$-tilting finite.
By Remark \ref{rem:tfin-quiver-nomult}, the Gabriel quiver of $A$ has no multiple arrows.
As a result of Lemma \ref{lem:quad-form}, we have the upper bound for the spectral radius of a bipartite quiver, called a \emph{separated quiver}. 
For a quiver $Q=(Q_{0},Q_{1})$, let $Q_{0}^{\mathrm{so}}:=\{ i^{+}\mid i\in Q_{0} \}$ and $Q_{0}^{\mathrm{si}}:= \{ i^{-}\mid i\in Q_{0}\}$.
Define a separated quiver $Q^{s}=(Q_{0}^{s},Q_{1}^{s})$ of $Q$ as $Q_{0}^{s}:=Q_{0}^{\mathrm{so}}\sqcup Q_{0}^{\mathrm{si}}$ and $Q_{1}^{s}:=\{ \alpha^s\mid \alpha\in Q_1\}$ where $\alpha^s$ is an arrow from $i^{+}$ to $j^{-}$ for each arrow $\alpha:i\to j$ in $Q_1$.
Note that separated quivers are bipartite but not necessarily connected.

\begin{proposition}\label{prop:sep-quiv}
Let $Q$ be a quiver without multiple arrows and let $Q^{s}$ be its separated quiver.
Assume that each connected component of $Q^{s}$ is a simply-laced Dynkin quiver or a simply-laced extended Dynkin quiver.
Then we have $\rho(Q)\leq 2$. 
Furthermore, if all connected components of $Q^{s}$ are only simply-laced Dynkin quivers, then we have $\rho(Q)<2$.
\end{proposition}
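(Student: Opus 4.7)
The plan is to reformulate the hypothesis as a bound on the quadratic form $q_{Q^s}$ of Lemma \ref{lem:quad-form}, read off a singular-value bound for the adjacency matrix $M(Q)$, and finish with the elementary inequality $\rho(M)\leq \sigma_{\max}(M)$.

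Setting $\Delta:=Q^s$ and using the bijections $Q_{0}\cong Q_{0}^{\mathrm{so}}=\Delta_0^+$ and $Q_{0}\cong Q_{0}^{\mathrm{si}}=\Delta_0^-$, the construction of the separated quiver gives that the number of arrows from $i^+$ to $j^-$ in $\Delta$ equals the number of arrows from $i$ to $j$ in $Q$. Since $Q$ has no multiple arrows, these numbers lie in $\{0,1\}$, so the coefficient matrix $(a_{ij})$ appearing in $q_\Delta$ is exactly $M(Q)$. Hence
\begin{align}
q_\Delta(\bm{x}) = 4\|\bm{x}\|^2 - \|M(Q)\bm{x}\|^2 \quad\text{for all } \bm{x}\in\mathbb{R}^{Q_0}, \notag
\end{align}
where $\|\cdot\|$ denotes the Euclidean norm. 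Moreover, $q_\Delta$ decomposes as an orthogonal sum over the connected components of $\Delta$: the sum over $\Delta_0^-$ is clearly block-diagonal, and for a source $i\in\Delta_0^+$ every neighbor $j$ with $a_{ij}\neq 0$ lies in the same component as $i$, so the inner squares also split.

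By Lemma \ref{lem:quad-form}, the hypothesis that each component is simply-laced Dynkin or extended Dynkin forces $q_\Delta$ to be positive semidefinite, and positive definite when every component is Dynkin. Translating back, $\|M(Q)\bm{x}\|^2\leq 4\|\bm{x}\|^2$ for all $\bm{x}$ (with strict inequality for $\bm{x}\neq\bm{0}$ in the Dynkin-only case), that is, $\sigma_{\max}(M(Q))\leq 2$ (respectively, $<2$). The standard bound $\rho(M)\leq \sigma_{\max}(M)$, immediate from $Mv=\lambda v\Rightarrow |\lambda|\|v\|=\|Mv\|\leq \sigma_{\max}(M)\|v\|$, now yields $\rho(Q)\leq 2$, and $\rho(Q)<2$ when all components of $Q^s$ are simply-laced Dynkin. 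The only step requiring care is the translation to the quadratic form (where the no-multiple-arrows assumption is essential to identify $(a_{ij})$ with $M(Q)$); the genuine difficulty is absorbed into Lemma \ref{lem:quad-form}, whose proof is deferred.
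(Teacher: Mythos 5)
Your proposal is correct and is essentially the same argument as in the paper: the paper also reduces the claim to Lemma \ref{lem:quad-form} via the decomposition of $Q^s$ into connected components and then bounds $\|M(Q)\bm{x}\|$ to deduce $\rho(Q)\le 2$. The only cosmetic difference is that the paper works directly with complex vectors and a triangle-inequality step to obtain $\sup_{\|\bm{x}\|=1}\|M\bm{x}\|\le 2$, whereas you phrase the same estimate via the identity $q_{Q^s}(\bm{x})=4\|\bm{x}\|^{2}-\|M(Q)\bm{x}\|^{2}$ and the bound $\rho(M)\le\sigma_{\max}(M)$, which is equivalent (for a real matrix the operator norm over $\mathbb{R}$ and over $\mathbb{C}$ agree).
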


\begin{proof}
Let $M=(a_{ij})_{i,j\in Q_{0}}$ be the adjacent matrix of $Q$.
Denote by $\mathcal{C}$ the set of all connected components of $Q^{s}$.
Let $\bm{x}=(x_{i})_{i\in Q_{0}}\in \mathbb{C}^{Q_{0}}$ with $\|\bm{x}\|=1$, where $\|\cdot\|$ is the Euclidean norm.
For each $j^{-}\in Q^{s}_{0}$, let $x_{j^{-}}:=x_{j}$.
Note that $a_{ij}$ equals the number of arrows from $i^+$ to $j^-$ in $Q^s$. Then we have
\begin{align}
\|M\bm{x}\|^{2}
\leq \sum_{i\in Q_{0}}(\sum_{j\in Q_{0}}a_{ij}|x_{j}|)^{2}
=\sum_{\Delta\in \mathcal{C}}\sum_{i^{+}\in \Delta_{0}}(\sum_{j^{-}\in \Delta_{0}}a_{ij}|x_{j^{-}}|)^{2}. \notag
\end{align}
Assume that each connected component of $Q^{s}$ is a simply-laced Dynkin quiver or a simply-laced extended Dynkin quiver.
By Lemma \ref{lem:quad-form}(2) and $\|\bm{x}\|=1$, we obtain
\begin{align}
\|M\bm{x}\|^{2}
\leq \sum_{\Delta\in \mathcal{C}}\sum_{i^{+}\in \Delta_{0}}(\sum_{j^{-}\in \Delta_{0}}a_{ij}|x_{j^{-}}|)^{2}
\leq \sum_{\Delta\in \mathcal{C}}\sum_{j^{-}\in\Delta_{0}^{-}}4|x_{j^{-}}|^{2}= 4. \notag
\end{align}
This implies $\|M\bm{x}\|\leq 2$. 
Since the spectrum radius $\rho(M)$ is the absolute value of a maximal eigenvalue of $M$, we have
\begin{align}
\sup_{\|\bm{x}\|=1}\|M\bm{x}\|\geq \rho(M).\notag
\end{align}
Therefore, $\rho(Q)=\rho(M)\leq 2$ holds.
Furthermore, we assume that all connected components of $Q^{s}$ are only simply-laced Dynkin quivers.
By a similar argument above, it follows from Lemma \ref{lem:quad-form}(1) that $\|M\bm{x}\|^{2}<4$ holds, and hence $\rho(Q)=\rho(M)< 2$.
The proof is complete.
\end{proof}

Now, we are ready to prove Theorem \ref{thm:ub_rf}.

\begin{proof}[Proof of Theorem \ref{thm:ub_rf}]
Let $A$ be a $\tau$-tilting finite algebra.
Let $\mathcal{S}\in \sbrick A$ and $\mathcal{W}_{\mathcal{S}}$ the corresponding wide subcategory by Proposition \ref{prop:ringel}.
Since $A$ is $\tau$-tilting finite, it follows from Propositions \ref{prop:DIRRT-thm412} and \ref{prop:DIRRT-thm418} that there exists a finite-dimensional algebra $B$ such that $\mathcal{W}_{\mathcal{S}}$ is equivalent to $\mod B$ and $Q:=Q_{\mathcal{S}}=Q_{\simp(B)}$.
By \cite[Theorem X.2.4]{ARS95}, there exists a stable equivalence $\underline{\mod}(B/\rad^{2}B)\rightarrow \underline{\mod}\Bbbk Q^{s}$.

Assume that $A$ is of tame representation type.
Since there exist two fully faithful functors $\mod(B/\rad^{2}B)\rightarrow \mod B\rightarrow \mod A$, it follows from \cite[XIX.1.11 Theorem]{SS07} that $B$ and $B/\rad^{2}B$ are of finite representation type or of tame representation type. 
By \cite[Theorem X.2.4]{ARS95} and \cite[Corollary 3.4]{Kr97}, the stable equivalence yields that the path algebra $\Bbbk Q^{s}$ is of finite representation type or of tame representation type. For the former case, the quiver $Q^{s}$ is a disjoint union of simply-laced Dynkin quivers. By Proposition \ref{prop:sep-quiv}, we have $\rho(Q)<2$. On the other hand, for the latter case, the quiver $Q^{s}$ is a disjoint union of simply-laced Dynkin quivers and simply-laced extended Dynkin quivers. Thus we have $\rho(Q)\leq 2$. Hence $\FPdim(A)\leq 2$.
By a similar argument above, if $A$ is of finite representation finite, then we have $\FPdim(A)<2$.
\end{proof}

In the following, we give a proof of Lemma \ref{lem:quad-form}.

\begin{proof}[Proof of Lemma \ref{lem:quad-form}]
Let $\Delta$ be a bipartite quiver whose underlying graph is a simply-laced Dynkin diagram or a simply-laced extended Dynkin diagram.  We set $\Delta_{0}^{-}=\{1,2,\cdots, l\}$ and the elements in $\Delta_{0}^{+}$ is represented as $\bullet$.
If $\Delta$ is of type $\tilde{\mathsf{A}}_1$, then we have $q_{\Delta}(\bm{x})=0$ for all $\bm{x}$.
In the following, we assume that $\Delta$ is not of type $\tilde{\mathsf{A}}_1$.
Then $\Delta$ has no multiple arrows and we have
\begin{align}
q_{\Delta}(\bm{x})=\sum_{j\in \Delta_{0}^{-}}(4-|\mathrm{dp}(j)|)x_{j}^{2}-\sum_{i\in \Delta_{0}^{+}}\sum_{\substack{j, k \in \mathrm{ds}(i)\\j\neq k}} x_{j}x_{k},\notag
\end{align}
where $|\mathrm{dp}(j)|$ is the cardinality of $\mathrm{dp}(j)$.
Indeed, we obtain
\begin{align}
\sum_{i\in \Delta_{0}^{+}}(\sum_{j\in \Delta_{0}^{-}}a_{ij}x_{j})^{2}
&=\sum_{i\in \Delta_{0}^{+}}(\sum_{j\in \mathrm{ds}(i)}x_{j})^2
\notag\\
&=\sum_{i\in \Delta_{0}^{+}}\sum_{j,k\in \mathrm{ds}(i)}x_{j}x_{k}\notag\\
&=\sum_{i\in \Delta_{0}^{+}}\sum_{j\in \mathrm{ds}(i)}x_{j}^{2}+\sum_{i\in \Delta_{0}^{+}}\sum_{\substack{j, k \in \mathrm{ds}(i)\\j\neq k}}x_{j}x_{k}\notag\\
&=\sum_{j\in \Delta_{0}^{-}}|\mathrm{dp}(j)|x_{j}^{2}+\sum_{i\in \Delta_{0}^{+}}\sum_{\substack{j, k \in \mathrm{ds}(i)\\j\neq k}}x_{j}x_{k}.\notag
\end{align}

(1) Assume that the underlying graph of $\Delta$ is a Dynkin diagram of type $\mathsf{A}$, that is, $\Delta$ is one of the following quivers:
\begin{align}
\Delta^{(1)}:\xymatrix@C=4mm{\bullet \ar[r] & 1 & \ar[l] \cdots \ar[r] & l & \ar[l] \bullet}\quad
\Delta^{(2)}:\xymatrix@C=4mm{\bullet \ar[r] & 1 & \ar[l] \cdots \ar[r]&l}\quad
\Delta^{(3)}:\xymatrix@C=4mm{1 & \bullet \ar[l]\ar[r]& \cdots \ar[r]&l}\notag
\end{align}
Then we have the following quadratic forms.
\begin{align}
q_{\Delta^{(1)}}(\bm{x})
&=(x_{1}-x_{2})^{2} +\cdots +(x_{l-1}-x_{l})^{2} +x_{1}^{2}+x_{l}^{2},\notag\\
q_{\Delta^{(2)}}(\bm{x})
&=(x_{1}-x_{2})^{2} +\cdots +(x_{l-1}-x_{l})^{2} +x_{1}^{2}+2x_{l}^{2},\notag\\
q_{\Delta^{(3)}}(\bm{x})
&=(x_{1}-x_{2})^{2} +\cdots +(x_{l-1}-x_{l})^{2} +2x_{1}^{2}+2x_{l}^{2}.\notag
\end{align}

Assume that the underlying graph of $\Delta$ is a Dynkin diagram of type $\mathsf{D}$, that is, $\Delta$ is one of the following quivers:
\begin{align}
\begin{array}{ll}
\Delta^{(4)}:\xymatrix@C=4mm@R=4mm{\bullet \ar[r] & 1 & \ar[l] \cdots \ar[r] & l &\ar[l] \bullet\\& \bullet \ar[u] &&&}&
\Delta^{(5)}:\xymatrix@C=4mm@R=4mm{\bullet \ar[r] & 1 & \ar[l] \cdots \ar[r] & l \\& \bullet \ar[u] &&}\notag\\
\Delta^{(6)}:\xymatrix@C=4mm@R=4mm{1 & \ar[l] \bullet \ar[d] \ar[r] & 3 & \cdots \ar[l] \ar[r] & l & \ar[l] \bullet\\ & 2 &&&&}&
\Delta^{(7)}:\xymatrix@C=4mm@R=4mm{1 & \ar[l] \bullet \ar[d] \ar[r] & 3 & \cdots \ar[l] \ar[r] & l \\ & 2 &&&}
\end{array}\notag
\end{align}
Then we have the following quadratic forms.
\begin{align}
q_{\Delta^{(4)}}(\bm{x})
&=(x_{1}-x_{2})^{2}+\cdots+(x_{l-1}-x_{l})^{2}+x_{l}^{2},\notag\\
q_{\Delta^{(5)}}(\bm{x})
&=(x_{1}-x_{2})^{2}+\cdots+(x_{l-1}-x_{l})^{2}+2x_{l}^{2},\notag\\
q_{\Delta^{(6)}}(\bm{x})
&=(x_{1}-x_{2})^{2}+(\sqrt{2}x_{1}-\frac{1}{\sqrt{2}}x_{3})^{2}+(\sqrt{2}x_{2}-\frac{1}{\sqrt{2}}x_{3})^{2}\notag\\
&\hspace{10mm}+(x_{3}-x_{4})^{2}+\cdots+(x_{l-1}-x_{l})^{2}+x_{l}^{2},\notag\\
q_{\Delta^{(7)}}(\bm{x})
&=(x_{1}-x_{2})^{2}+(\sqrt{2}x_{1}-\frac{1}{\sqrt{2}})^{2}+(\sqrt{2}x_{2}-\frac{1}{\sqrt{2}}x_{3})^{2}\notag\\
&\hspace{10mm}+(x_{3}-x_{4})^{2}+\cdots+(x_{l-1}-x_{l})^{2}+2x_{l}^{2}.\notag
\end{align}

Assume that the underlying graph of $\Delta$ is a Dynkin diagram of type $\mathsf{E}$, that is, $\Delta$ is one of the following quivers:
\begin{align}
\begin{array}{ll}
\Delta^{(8)}:\xymatrix@C=4mm@R=4mm{\bullet \ar[r] & 1 & \ar[l] \bullet \ar[d] \ar[r] & 3 & \ar[l] \bullet\\ && 2 &&}&
\Delta^{(9)}:\xymatrix@C=4mm@R=4mm{1 & \ar[l] \bullet \ar[r] & 2 & \ar[l] \bullet \ar[r] & 3 \\&& \bullet \ar[u] &&}\\
\Delta^{(10)}:\xymatrix@C=4mm@R=4mm{\bullet \ar[r] & 1 & \ar[l] \bullet \ar[d] \ar[r] & 3 & \ar[l] \bullet \ar[r] & 4 \\ && 2 &&&}&
\Delta^{(11)}:\xymatrix@C=4mm@R=4mm{1 & \ar[l] \bullet \ar[r] & 2 & \ar[l] \bullet \ar[r] & 3 & \ar[l] \bullet\\&& \bullet \ar[u] &&&}\\
\Delta^{(12)}:\xymatrix@C=4mm@R=4mm{\bullet \ar[r] & 1 & \ar[l] \bullet \ar[d] \ar[r] & 3 & \ar[l] \bullet \ar[r] & 4 & \ar[l] \bullet\\ && 2 &&&&}&
\Delta^{(13)}:\xymatrix@C=4mm@R=4mm{1 & \ar[l] \bullet \ar[r] & 2 & \ar[l] \bullet \ar[r] & 3 & \ar[l] \bullet\ar[r] & 4\\ && \bullet \ar[u] &&&}
\end{array}\notag
\end{align}
Then we have the following quadratic forms.
\begin{align}
q_{\Delta^{(8)}}(\bm{x})
&=(x_{1}-x_{2})^{2}+(x_{1}-x_{3})^{2}+(x_{2}-x_{3})^{2}+x_{2}^{2},\notag\\
q_{\Delta^{(9)}}(\bm{x})
&=(\sqrt{2}x_{1}-\frac{1}{\sqrt{2}}x_{2})^{2}+(\frac{1}{\sqrt{2}}x_{2}-\sqrt{2}x_{3})^{2}+x_{1}^{2}+x_{3}^{2},\notag\\
q_{\Delta^{(10)}}(\bm{x})
&=(x_{1}-x_{2})^{2}+(x_{1}-x_{3})^{2}\notag\\
&\hspace{10mm}+(\sqrt{2}x_{2}-\frac{1}{\sqrt{2}}x_{3})^{2}+(\frac{1}{\sqrt{2}}x_{3}-\sqrt{2}x_{4})^{2}+x_{4}^{2},\notag\\
q_{\Delta^{(11)}}(\bm{x})
&=(\sqrt{2}x_{1}-\frac{1}{\sqrt{2}}x_{2})^{2}+(\frac{1}{\sqrt{2}}x_{2}-\sqrt{2}x_{3})^{2}+x_{1}^{2},\notag\\
q_{\Delta^{(12)}}(\bm{x})
&=(x_{1}-x_{2})^{2}+(x_{1}-x_{3})^{2}+(\sqrt{2}x_{2}-\frac{1}{\sqrt{2}}x_{3})^{2}+(\frac{1}{\sqrt{2}}x_{3}-\sqrt{2}x_{4})^{2},\notag\\
q_{\Delta^{(13)}}(\bm{x})
&=(\sqrt{3}x_{1}-\frac{1}{\sqrt{3}}x_{2})^{2}+(\sqrt{\frac{2}{3}}x_{2}-\sqrt{\frac{3}{2}}x_{3})^{2}+(\frac{1}{\sqrt{2}}x_{3}-\sqrt{2}x_{4})^{2}+x_{4}^{2}.\notag
\end{align}

Thus we can easily check that all quadratic forms are positive definite.

(2) Assume that the underlying graph of $\Delta$ is an extended Dynkin diagram of type $\tilde{\mathsf{A}}$ or $\tilde{\mathsf{D}}$, that is, $\Delta$ is one of the following quivers:
\begin{align}
\begin{array}{ll}
\Delta^{(14)}:\xymatrix@C=4mm@R=4mm{1 & \bullet\ar[l]\ar[r]  & \cdots & \bullet\ar[l]\ar[r] & l\\&&\bullet\ar[ull]\ar[urr]&&}&
\Delta^{(15)}:\xymatrix@C=4mm@R=4mm{ \bullet\ar[r] & 1 &  \cdots\ar[l]\ar[r]  &  l & \bullet\ar[l]\\ & \bullet\ar[u] && \bullet\ar[u] &}\\
\Delta^{(16)}:\xymatrix@C=4mm@R=4mm{1 & \bullet\ar[l]\ar[d]\ar[r] & \cdots \ar[r] &  l & \bullet\ar[l] \\& 2 && \bullet\ar[u] &}&
\Delta^{(17)}:\xymatrix@C=4mm@R=4mm{1 & \bullet\ar[l]\ar[d]\ar[r] & \cdots  &  \bullet\ar[l]\ar[d]\ar[r] & l\\&2&&l-1&}
\end{array}\notag
\end{align}
Then we have the following quadratic forms.
\begin{align}
q_{\Delta^{(14)}}(\bm{x})
&=(x_{1}-x_{2})^{2}+\cdots+(x_{l-1}+x_{l})^{2}+(x_{l}-x_{1})^{2}\notag\\
q_{\Delta^{(15)}}(\bm{x})
&=\left\{\begin{array}{ll} 0 & (\ell=1) \\ (x_{1}-x_{2})^{2}+\cdots+(x_{l-1}-x_{l})^{2} & (\ell \ge 2) \end{array}\right. \notag\\
q_{\Delta^{(16)}}(\bm{x})
&=\left\{\begin{array}{ll}
(x_{1}-x_{2})^{2}+(\sqrt{2}x_{1}-\frac{1}{\sqrt{2}}x_{3})^{2}+(\sqrt{2}x_{2}-\frac{1}{\sqrt{2}}x_{3})^{2} & (l=3)\vspace{1mm}\\
\begin{array}{l}
(x_{1}-x_{2})^{2}+(\sqrt{2}x_{1}-\frac{1}{\sqrt{2}}x_{3})^{2}+(\sqrt{2}x_{2}-\frac{1}{\sqrt{2}}x_{3})^{2}\\
\hspace{10mm}+(x_{3}-x_{4})^{2}+\cdots +(x_{l-1}-x_{l})^{2}
\end{array}
& (l\ge 4 ) 
\end{array}\right.\notag\\
q_{\Delta^{(17)}}(\bm{x})
&=\left\{
\begin{array}{ll}
\begin{array}{l}
(x_1-x_2)^2+(x_1-x_3)^2+(x_1-x_4)^2\\
\hspace{10mm}+(x_2-x_3)^2+(x_2-x_4)^2+(x_3-x_4)^2
\end{array}& (l=4)\vspace{2mm}\\
\begin{array}{l}
(x_{1}-x_{2})^{2}+(\sqrt{2}x_{1}-\frac{1}{\sqrt{2}}x_{3})^{2}+(\sqrt{2}x_{2}-\frac{1}{\sqrt{2}}x_{3})^{2}\\
\hspace{5mm}+(x_{3}-x_{4})^{2}+\cdots+(x_{l-3}-x_{l-2})^{2}\\
\hspace{5mm}+(\frac{1}{\sqrt{2}}x_{l-2}-\sqrt{2}x_{l-1})^{2}+(\frac{1}{\sqrt{2}}x_{l-2}-\sqrt{2}x_{l})^{2}+(x_{l-1}- x_{l})^{2} 
\end{array}& (l\ge 5)
\end{array}\right.
\notag
\end{align}
Note that
\begin{align}
\begin{array}{ll}
q_{\Delta^{(14)}}(1,1,\ldots, 1)=0 & q_{\Delta^{(15)}}(1,1,\ldots, 1)=0\\
q_{\Delta^{(16)}}(1,1,2,2,\ldots, 2,2)=0 & q_{\Delta^{(17)}}(1,1,2,2,\ldots, 2,2,1,1)=0
\end{array}\notag
\end{align}

Assume that the underlying graph of $\Delta$ is an extended Dynkin diagram of type $\tilde{\mathsf{E}}$, that is, $\Delta$ is one of the following quivers:
\begin{align}
\begin{array}{ll}
\Delta^{(18)}:\xymatrix@C=4mm@R=4mm{\bullet\ar[r] & 1 & \bullet\ar[l]\ar[r]\ar[d] & 3 & \bullet\ar[l] \\&& 2 &\bullet\ar[l]&}&
\Delta^{(19)}:\xymatrix@C=4mm@R=4mm{1 & \bullet\ar[l]\ar[r] & 2 & \bullet\ar[l]\ar[r] & 4\\&&\bullet\ar[u]\ar[r] &3&}\\
\Delta^{(20)}:\xymatrix@C=4mm@R=4mm{\bullet\ar[r]& 1 & \bullet\ar[l]\ar[r] & 2 & \bullet\ar[l]\ar[r] & 3 & \bullet\ar[l] \\ &&& \bullet\ar[u] &&&}&
\Delta^{(21)}:\xymatrix@C=4mm@R=4mm{1 & \bullet\ar[l]\ar[r] & 2 & \bullet\ar[l]\ar[r]\ar[d] & 4 & \bullet\ar[l]\ar[r] & 5 \\ &&& 3 &&&}\\
\Delta^{(22)}:\xymatrix@C=4mm@R=4mm{ \bullet\ar[r] & 1 & \bullet\ar[l]\ar[r]\ar[d] & 3 & \bullet\ar[l]\ar[r] & 4 & \bullet\ar[l]\ar[r] & 5 \\ && 2 &&&&&}&
\Delta^{(23)}:\xymatrix@C=4mm@R=4mm{ 1 & \bullet\ar[l]\ar[r] & 2 & \bullet\ar[l]\ar[r] & 3 & \bullet\ar[l]\ar[r] & 4 & \bullet\ar[l] \\ && \bullet\ar[u] &&&&&}\notag
\end{array}
\end{align}
Then we have the following quadratic forms.
\begin{align}
q_{\Delta^{(18)}}(\bm{x})
&=(x_{1}-x_{2})^{2}+(x_{1}-x_{3})^{2}+(x_{2}-x_{3})^{2},\notag\\
q_{\Delta^{(19)}}(\bm{x})
&=(\sqrt{3}x_{1}-\frac{1}{\sqrt{3}}x_{2})^{2}+(\sqrt{3}x_{3}-\frac{1}{\sqrt{3}}x_{2})^{2}+(\sqrt{3}x_{4}-\frac{1}{\sqrt{3}}x_{2})^{2},\notag\\
q_{\Delta^{(20)}}(\bm{x})
&=(\sqrt{2}x_{1}-\frac{1}{\sqrt{2}}x_{2})^{2}+(\frac{1}{\sqrt{2}}x_{2}-\sqrt{2}x_{3})^{2},\notag\\
q_{\Delta^{(21)}}(\bm{x})
&=(\sqrt{3}x_{1}-\frac{1}{\sqrt{3}}x_{2})^{2}+(\frac{\sqrt{2}}{\sqrt{3}}x_{2}-\frac{\sqrt{3}}{\sqrt{2}}x_{3})^{2}+(x_{2}-x_{4})^{2}\notag\\
&\hspace{10mm}+(\frac{\sqrt{3}}{\sqrt{2}}x_{3}-\frac{\sqrt{2}}{\sqrt{3}}x_{4})^{2}+(\frac{1}{\sqrt{3}}x_{4}-\sqrt{3}x_{5})^{2},\notag\\
q_{\Delta^{(22)}}(\bm{x})
&=(\frac{\sqrt{3}}{2}x_{1}-\frac{2}{\sqrt{3}}x_{2})^{2}+(\frac{\sqrt{5}}{2}x_{1}-\frac{2}{\sqrt{5}}x_{3})^{2}+(\frac{\sqrt{5}}{\sqrt{3}}x_{2}-\frac{\sqrt{3}}{\sqrt{5}}x_{3})^{2}\notag\\
&\hspace{10mm}+(\frac{\sqrt{3}}{\sqrt{5}}x_{3}-\frac{\sqrt{5}}{\sqrt{3}}x_{4})^{2}+(\frac{1}{\sqrt{3}}x_{4}-\sqrt{3}x_{5})^{2},\notag\\
q_{\Delta^{(23)}}(\bm{x})
&=(\sqrt{3}x_{1}-\frac{1}{\sqrt{3}}x_{2})^{2}+(\frac{\sqrt{2}}{\sqrt{3}}x_{2}-\frac{\sqrt{3}}{\sqrt{2}}x_{3})^{2}+(\frac{1}{\sqrt{2}}x_{3}-\sqrt{2}x_{4})^{2}.\notag
\end{align}
Note that
\begin{align}
\begin{array}{lll}
q_{\Delta^{(18)}}(1,1,1)=0&q_{\Delta^{(19)}}(1,3,1,1)=0&q_{\Delta^{(20)}}(1,2,1)=0\\
q_{\Delta^{(21)}}(1,3,2,3,1)=0&q_{\Delta^{(22)}}(4,3,5,3,1)=0&q_{\Delta^{(23)}}(1,3,2,1)=0
\end{array}\notag
\end{align}

Thus we can easily check that all quadratic forms are positive semidefinite but not positive definite.
The proof is complete.
\end{proof}

\section{Frobenius--Perron dimension of Nakayama algebras}

In this section, we determine the Frobenius--Perron dimension of a Nakayama algebra.
Namely, the aim is to prove the following theorem.

\begin{theorem}\label{thm:FPdim_of_Nakayama}
Let $A$ be a Nakayama algebra.
Then the following statements hold.
\begin{itemize}
\item[(1)] If $A$ is linear, then we have $\FPdim(A)=0$.
\item[(2)] If $A$ is cyclic, then we have $\FPdim(A)=1$.
\end{itemize}
\end{theorem}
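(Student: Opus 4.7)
My plan is to treat the two parts by different methods.

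For (1), I would observe that a linear Nakayama algebra is representation-directed. Its indecomposables are the uniserial modules $M(i,\ell)$ with top $S(i)$, and any non-zero non-isomorphism $M(i,\ell)\to M(i',\ell')$ forces $i\le i'<i+\ell$, so the pair $(i,-\ell)$ strictly increases in lexicographic order along any chain of such maps. This rules out cycles in $\mod A$, so $A$ is representation-directed and Proposition \ref{prop:cc23-cor4.3}(3) gives $\FPdim(A)=0$ at once.

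For (2), I would prove matching bounds. For the lower bound, take the semibrick $\simp(A)$: its Ext-quiver is the Gabriel quiver of $A$, which is either a single loop (when $n=1$) or the directed $n$-cycle (when $n\ge 2$). In each case the adjacency matrix is a cyclic permutation whose eigenvalues are the $n$th roots of unity, so its spectral radius is $1$; hence $\FPdim(A)\ge 1$. For the upper bound I would analyse $Q_{\mathcal{S}}$ for an arbitrary semibrick $\mathcal{S}$. Since Nakayama algebras are representation-finite, hence $\tau$-tilting finite, Remark \ref{rem:tfin-quiver-nomult}(2) together with Corollary \ref{cor:Qs=QM} guarantees that no arrow or loop in $Q_{\mathcal{S}}$ has multiplicity above one. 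Bricks in a cyclic Nakayama algebra on $n$ vertices are precisely the uniserial modules $M(a,\ell)$ with $\ell\le n$, since longer uniserials repeat the composition factor $S(a)$ and admit a non-trivial endomorphism. A standard Nakayama-algebra computation shows that a non-split sequence $0\to N\to E\to M(a,\ell)\to 0$ with $E$ indecomposable has $E$ uniserial with top $S(a)$, so $N=M(a+\ell,\ell')$ for some $\ell'\ge 1$; equivalently, every out-neighbour of $M(a,\ell)$ in $Q_{\mathcal{S}}$ is a brick with top $S(a+\ell)$.

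If two distinct bricks of $\mathcal{S}$ shared this top simple, the one of shorter length would be a uniserial quotient of the other, producing a non-zero hom and violating the semibrick condition. Hence each vertex of $Q_{\mathcal{S}}$ has out-degree at most one, and the symmetric argument on socles forces in-degree at most one. Consequently every connected component of $Q_{\mathcal{S}}$ is either an isolated vertex (possibly carrying a single loop), a directed path, or a directed cycle; each has spectral radius at most $1$. By Lemma \ref{lem:spectrad-subquiver}(3), $\rho(Q_{\mathcal{S}})\le 1$, so $\FPdim(A)\le 1$, and combining with the lower bound yields $\FPdim(A)=1$.

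The main technical step is the ``top-of-the-kernel'' extension calculation in Nakayama algebras; once this is in hand, the degree-at-most-one observation and the classification of the possible components of $Q_{\mathcal{S}}$ are immediate. I would also spell out that the no-multiple-loops part of Remark \ref{rem:tfin-quiver-nomult}(2) is what rules out a singleton semibrick $\{M\}$ with $\dim_{\Bbbk}\Ext_A^1(M,M)\ge 2$, which is essential for keeping the contribution of self-loops under control.
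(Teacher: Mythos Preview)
Your argument for (1) is correct and essentially the paper's: both observe that a linear Nakayama algebra is (a factor of) a representation-directed algebra and invoke Proposition~\ref{prop:cc23-cor4.3}(3). For (2) your route is genuinely different. The paper uses $\tau$-tilting reduction: Lemma~\ref{lem:end-bon-com} shows that the reduction $\End_A(\widetilde M)/[M]$ at any indecomposable $\tau$-rigid $M$ is again a Nakayama algebra, and then by induction (Proposition~\ref{prop:nakayama-key}) every $Q_{\mathcal S}$ is a disjoint union of linear and cyclic quivers. Your approach, a direct out-/in-degree bound on $Q_{\mathcal S}$ via explicit $\Ext^1$ computations, is more elementary and would bypass Bongartz completions entirely.

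However, your execution has two genuine gaps. First, the inference ``non-split with $E$ indecomposable forces $N=M(a+\ell,\ell')$; equivalently, every out-neighbour of $M(a,\ell)$ has top $S(a+\ell)$'' is unjustified: $\Ext^1_A(M,N)\neq 0$ does not guarantee a non-split extension with indecomposable middle term. For $n\ge 3$, $M=M(a,2)$ and $N=M(a+1,2)$ one has $\Ext^1_A(M,N)\neq 0$ but no uniserial $E$ fits; here $\Hom_A(N,M)\neq 0$, so these two bricks never lie in a common semibrick and your \emph{conclusion} survives, but your \emph{argument} never invokes the semibrick hypothesis at this step. A correct proof must show directly that $\Ext^1_A(M,N)\neq 0$ together with $\Hom_A(M,N)=\Hom_A(N,M)=0$ forces $\top N=S(a+\ell)$, which needs a further case analysis. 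Second, Remark~\ref{rem:tfin-quiver-nomult}(2) does not exclude multiple loops: its proof passes to a Kronecker quotient, which requires two distinct vertices, and indeed there exist local (hence $\tau$-tilting finite) algebras whose unique simple $S$ has $\dim_\Bbbk\Ext^1(S,S)\ge 2$. The bound $\dim_\Bbbk\Ext^1_A(M,N)\le 1$ for bricks in a Nakayama algebra does hold, but you must supply a direct argument (for instance via the minimal projective presentation of $M$, which exhibits $\Ext^1_A(M,N)$ as a subquotient of a one-dimensional space) rather than appeal to $\tau$-tilting finiteness.
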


To prove the theorem above, we first recall the definition and basic properties of Nakayama algebras. For details, see \cite[Chapter V]{ASS06}.
We call an algebra $A$ a \emph{Nakayama algebra} if $A$ is isomorphic to the bound quiver algebra $\Bbbk Q/I$, where each connected component of $Q$ is isomorphic to one of the following two quivers
\begin{align}
\mathsf{A}_{n}^{\leftarrow}: \xymatrix{1&2\ar[l]_-{\alpha_{1}}&\cdots\ar[l]_-{\alpha_{2}}&n-1\ar[l]_-{\alpha_{n-2}}&n\ar[l]_-{\alpha_{n-1}}},\notag\\
\tilde{\mathsf{A}}_{n}^{\leftarrow}: \xymatrix{1\ar@/_5mm/[rrrr]_-{\alpha_{n}}&2\ar[l]_-{\alpha_{1}}&\cdots\ar[l]_-{\alpha_{2}}&n-1\ar[l]_-{\alpha_{n-2}}&n\ar[l]_-{\alpha_{n-1}}},\notag
\end{align}
and $I$ is an admissible ideal of $\Bbbk Q$.
Note that $\tilde{\mathsf{A}}_{1}^{\leftarrow}$ is a quiver with one vertex and one loop.
We call $\mathsf{A}_{n}^{\leftarrow}$ a \emph{linear quiver} and $\tilde{\mathsf{A}}_{n}^{\leftarrow}$ a \emph{cyclic quiver}.
The (connected) Nakayama algebra $A$ is said to be \emph{linear} (respectively, \emph{cyclic}) if $Q$ is isomorphic to $\mathsf{A}_{n}^{\leftarrow}$ (respectively, $\tilde{\mathsf{A}}_{n}^{\leftarrow}$) for some $n\geq 1$.
It is well known that all Nakayama algebras are representation-finite (see \cite[V.3.5 Theorem]{ASS06}), and hence $\tau$-tilting finite.

Next, we give the description of indecomposable modules over a Nakayama algebra.
Let $A$ be a connected Nakayama algebra with $n$ non-isomorphic simple modules. 
Since each indecomposable $A$-module $M$ is uniserial (see \cite[V.3.2. Theorem]{ASS06}), it has a unique composition series:
\begin{align}
0=M_{0}\subset M_{1}\subset M_{2}\subset \cdots \subset M_{l}=M.\notag
\end{align}
Thus $M$ is uniquely determined by its (simple) socle $M_{1}=S(i)$ for some $i\in Q_{0}$ and its length $l=\ell(M)$.
We write such a module as $M(i;l)$.
By the definition of the Auslander--Reiten translations $\tau$, we have $\tau M(i;l)=M(i-1;l)$ if $M(i;l)$ is non-projective.
If $M(i;l)$ is projective, then we put $\tau M(i;l):=0$.
For a cyclic Nakayama algebra, we identify $i\in Q_{0}$ with $i\pm n$, e.g., $M(n+1;l)=M(1;l)$.
All bricks and indecomposable $\tau$-rigid modules are characterized by the following conditions.

\begin{lemma}\label{lem:brick-tau-rigid}
Let $A$ be a connected Nakayama algebra with $n$ non-isomorphic simple modules and let $M$ be an indecomposable $A$-module.
Then the following statements hold.
\begin{itemize}
\item[(1)] $M$ is a brick if and only if $\ell(M)\leq n$.
\item[(2)] $M$ is $\tau$-rigid if and only if $M$ is projective or $\ell(M)<n$.
\end{itemize}
\end{lemma}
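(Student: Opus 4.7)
The plan is to reduce both parts to the standard description of $\Hom$-spaces between uniserial modules over a Nakayama algebra $A$. Since every indecomposable $A$-module is uniserial of the form $M(i;l)$, with simple socle $S(i)$ and Loewy factors $S(i), S(i+1), \ldots, S(i+l-1)$ from bottom to top (indices modulo $n$ in the cyclic case), the first task is to parametrise the nonzero morphisms between two such modules.

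Specifically, a nonzero morphism $f\colon M(i;l) \to M(j;m)$ factors through its image, which is simultaneously a nonzero quotient of $M(i;l)$ and a nonzero submodule of $M(j;m)$. By uniseriality, the nonzero quotients of $M(i;l)$ are exactly $M(i+k;\, l-k)$ for $0 \le k < l$, while the nonzero submodules of $M(j;m)$ are $M(j; m')$ for $0 < m' \le m$. Matching these yields the condition $i + k \equiv j$ together with $0 < l - k \le m$, and conversely any such $k$ produces a one-dimensional space of nonzero morphisms. The subquotient $M(i+k;\, l-k)$ appearing here exists automatically, being a Loewy subquotient of the already-existing $A$-module $M(i;l)$.

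For (1), specialising to $j = i$ and $m = l$ shows that nonzero endomorphisms correspond to integers $k$ with $0 \le k < l$ and $k \equiv 0 \pmod{n}$ in the cyclic case, while in the linear case only $k = 0$ is admissible. Hence $M$ is a brick iff $k = 0$ is the only admissible value. In the cyclic case this is equivalent to $l \le n$; in the linear case it always holds, and moreover $\ell(M) \le n$ is automatic there, so in both situations $M$ is a brick iff $\ell(M) \le n$. For (2), the projective case is immediate since $\tau M = 0$. For a non-projective $M = M(i;l)$ one has $\tau M = M(i-1;\, l)$, and the same analysis with $j = i-1$, $m = l$ gives that $\Hom_A(M, \tau M) \neq 0$ iff there exists $k$ with $0 \le k < l$ and $k \equiv -1 \pmod{n}$. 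In the linear case no such $k$ exists, and indeed every non-projective indecomposable in a linear Nakayama algebra on $n$ vertices has length strictly less than $n$ (a module of length $n$ must have top $S(n)$ and socle $S(1)$, forcing it to be the projective $P(n)$). In the cyclic case the minimal candidate is $k = n - 1$, admissible iff $l \ge n$; so $M$ is $\tau$-rigid iff $\ell(M) < n$.

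The main obstacle is establishing the $\Hom$-parametrisation of the second paragraph in sufficient generality, in particular verifying that each admissible value of $k$ genuinely produces a nonzero morphism (rather than being killed by relations in the admissible ideal $I$). This reduces to the standard observation that the canonical composite $M(i;l) \twoheadrightarrow M(i+k;\, l-k) \hookrightarrow M(j;m)$ is well-defined whenever the target module itself exists as an $A$-module, but it deserves to be spelled out carefully once rather than reproved in each case.
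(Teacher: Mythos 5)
Your proof is correct. The paper does not actually prove this lemma: it declares part (1) ``clear by definition'' and delegates part (2) to \cite[Proposition 2.5]{Ad16}. Your argument---classifying morphisms between uniserial modules $M(i;l)\to M(j;m)$ via their images and then specialising to $\operatorname{End}_A(M)$ and $\Hom_A(M,\tau M)$---is precisely the standard computation that underlies both of those assertions, and you carry it out correctly, including the boundary behaviour (only $k=0$ is admissible when $\ell(M)\le n$, the smallest admissible $k$ for $\Hom(M,\tau M)$ in the cyclic case is $n-1$, and in the linear case length $n$ forces projectivity).
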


\begin{proof}
By the definition, the statement (1) is clear. The statement (2) follows from \cite[Proposition 2.5]{Ad16}.
\end{proof}

In the following, we give a proof of Theorem \ref{thm:FPdim_of_Nakayama}.
First, we show Theorem \ref{thm:FPdim_of_Nakayama}(1).

\begin{proof}[Proof of Theorem \ref{thm:FPdim_of_Nakayama}(1)]
Let $A$ be a linear Nakayama algebra, that is, a factor algebra of the path algebra $\Bbbk\mathsf{A}_{n}^{\leftarrow}$ for some $n\geq 1$. 
Since $\Bbbk\mathsf{A}_{n}^{\leftarrow}$ is clearly representation-directed, it follows from Proposition \ref{prop:cc23-cor4.3}(3) that $\FPdim(A)=0$ holds.
\end{proof}

We show Theorem \ref{thm:FPdim_of_Nakayama}(2).
Assume that $A$ is a cyclic Nakayama algebra.
To calculate the Frobenius--Perron dimension of $A$, it is necessary to determine the shapes of the Ext-quivers of semibricks in $\mod A$.
We observe the Bongartz completion for a $\tau$-rigid pair $(M,0):=(M(i;l),0)$.
If $M$ is projective, then $(A,0)$ is the Bongartz completion of $(M,0)$.
Assume that $M$ is not projective.
Since $A$ is cyclic, we may assume that $i=1$.
By Lemma \ref{lem:brick-tau-rigid}(2), we have $l<\min\{ \ell(P(l)), n\}$.
Define an $A$-module $\widetilde{M}$ as $\widetilde{M}=M\oplus X\oplus P$, where
\begin{align}
X:=\bigoplus_{1\leq j<l}M(1;j)\text{ and }P:=\bigoplus_{l\leq k<n}P(k). \notag
\end{align}
Note that $\End_{A}(X)$ is an isomorphic to the path algebra $\Bbbk \mathsf{A}_{l-1}^{\leftarrow}$ and $\End_{A}(P)$ is a Nakayama algebra.
The module $\widetilde{M}$ has the following properties.

\begin{lemma}\label{lem:end-bon-com}
The following statements hold.
\begin{itemize}
\item[(1)] $(\widetilde{M}, 0)$ is the Bongartz completion of $(M,0)$.
\item[(2)] $\End_{A}(\widetilde{M})/[M]$ is a Nakayama algebra.
\end{itemize}
\end{lemma}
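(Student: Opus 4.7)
The plan is to prove part (1) by checking $(\widetilde{M}, 0)$ is a $\tau$-tilting pair containing $M$ and then identifying it as the Bongartz completion, after which part (2) follows from Jasso's reduction (Proposition \ref{prop:jred}) by computing the Gabriel quiver of the resulting algebra.

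For part (1), a count shows $|\widetilde{M}| = 1 + (l-1) + (n-l) = n = |A|$, so it suffices to prove $\widetilde{M}$ is $\tau$-rigid and then identify it with the Bongartz completion. Each indecomposable summand is $\tau$-rigid by Lemma \ref{lem:brick-tau-rigid}(2) (the assumption $l < \ell(P(l))$ ensures the non-projective summands have length less than $n$, hence are bricks and $\tau$-rigid). The only non-trivial pairwise vanishing conditions are $\Hom_A(M(1;j'),\tau M(1;j))=\Hom_A(M(1;j'),M(n;j))=0$ for $1 \leq j, j' \leq l$; by uniseriality, a nonzero such morphism would require a nonzero quotient of $M(1; j')$ to embed as a submodule of $M(n; j)$, and matching tops, socles, and lengths via the cyclic indexing mod $n$ shows this demands the quotient's length to exceed $j'$, which is impossible. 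To identify $(\widetilde{M}, 0)$ as the Bongartz completion, I would verify the torsion-class equality $\Fac(\widetilde{M}) = {}^{\perp}(\tau M)$: the inclusion $\subseteq$ follows from $\tau$-rigidity, and for the reverse inclusion, I would enumerate indecomposables in ${}^{\perp} M(n; l)$ by their socle and length and realize each as a factor of a direct sum of summands of $\widetilde{M}$.

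For part (2), Proposition \ref{prop:jred} combined with part (1) identifies $\End_A(\widetilde{M})/[M]$ with the $\tau$-tilting reduction $A(M, 0)$, so it suffices to show this algebra has a Gabriel quiver that is a disjoint union of linear and cyclic quivers. The summands $M(1; 1), \ldots, M(1; l-1)$ of $X$ contribute a linear component with underlying quiver $\mathsf{A}_{l-1}^{\leftarrow}$, while the projectives $P(l), \ldots, P(n-1)$ contribute another Nakayama component coming from the corner algebra $e_I A e_I$ with $e_I = \sum_{l \leq k < n} e_k$ (itself a linear Nakayama algebra, since the vertices $\{l, l+1, \ldots, n-1\}$ form a consecutive block of the cycle). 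A direct case analysis on the cross-morphisms between $X$-summands and $P$-summands, modulo morphisms factoring through $M$, shows these assemble into arrows consistent with a disjoint union of linear and cyclic quivers, and therefore $A(M, 0)$ is Nakayama.

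Main obstacle: The subtle step in part (1) is confirming that $(\widetilde{M}, 0)$ is the \emph{maximum} element of $\sttilt_{(M, 0)}(A)$, which requires the torsion-class computation $\Fac(\widetilde{M}) = {}^{\perp}(\tau M)$ rather than merely showing $(\widetilde{M}, 0)$ lies in this set; carrying out the enumeration of indecomposables in ${}^{\perp}M(n;l)$ via socle and length requires care with the cyclic indexing. For part (2), the delicate point is verifying that the cross-arrows between the $X$- and $P$-components, after passing to the reduction, do not produce branching in the Gabriel quiver, requiring careful accounting of how the intervals $\{1, \ldots, l\}$ and $\{l, \ldots, n-1\}$ interact in the cyclic quiver $\tilde{\mathsf{A}}_n^{\leftarrow}$.
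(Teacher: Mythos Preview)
Your approach is broadly correct, but differs from the paper's in both parts, and the paper's route is considerably shorter.

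For part (1), the $\tau$-rigidity check is essentially the same, though the paper observes more directly that $\tau\widetilde{M}$ has socle $S(n)$ while no summand of $\widetilde{M}$ has $S(n)$ as a composition factor (since $l<n$), so $\Hom_A(\widetilde{M},\tau\widetilde{M})=0$ at once. For the identification with the Bongartz completion, you propose verifying the torsion-class equality $\Fac(\widetilde{M})={}^{\perp}(\tau M)$ by enumerating indecomposables, which is correct but laborious. The paper instead uses a maximality criterion: it suffices to check that no indecomposable non-projective summand $M(1;j)$ of $\widetilde{M}$ (with $j<l$) admits a surjection from $\add(\widetilde{M}/M(1;j))$, which is immediate from the socle and length data. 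This avoids the enumeration entirely.

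For part (2), you invoke Jasso's reduction and then aim to reconstruct the Gabriel quiver of $A(M,0)$ by analyzing cross-arrows between the $X$-block and the $P$-block. This would work, but the paper bypasses the Gabriel-quiver analysis: writing $\End_A(\widetilde{M})$ as a $3\times 3$ matrix algebra along $\widetilde{M}=M\oplus X\oplus P$, one checks $\Hom_A(P,X)=0$ (since $X$ has no composition factor equal to $\top P$) and $\Hom_A(X,P)/[M](X,P)=0$ (every nonzero map $M(1;j)\to P$ factors through $M(1;l)=M$). Hence
\[
\End_A(\widetilde{M})/[M]\cong \bigl(\End_A(X)/[M](X,X)\bigr)\times\bigl(\End_A(P)/[M](P,P)\bigr),
\]
a product of two factor algebras of Nakayama algebras, hence Nakayama. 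This is a one-line structural argument rather than an arrow-by-arrow verification; your case analysis of cross-morphisms is in effect rediscovering these two vanishing statements, so you would do well to isolate them explicitly rather than route through the Gabriel quiver.
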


\begin{proof}
(1) First, we show that $(\widetilde{M},0)$ is a $\tau$-tilting pair. 
Since $|\widetilde{M}|=n$ holds, it is enough to claim that $\widetilde{M}$ is $\tau$-rigid.
By a property of the Auslander--Reiten translation $\tau$, we obtain 
\begin{align}
\displaystyle \tau\widetilde{M}=M(n;l)\oplus \bigoplus_{1\leq j <l}M(n;j).\notag
\end{align}
Since $M(1;l)$ and $M(1;j)$ ($1\leq j<l$) do not contain $S(n)$ as a composition factor, we have $\Hom_{A}(\widetilde{M},\tau\widetilde{M})=0$.

To prove that $(\widetilde{M},0)$ is the Bongartz completion of $(M,0)$, it is enough to show that, for each $M(1;j)$ ($1\leq j<l$), there exists no exact sequence 
\begin{align}
L\rightarrow M'\xrightarrow{g} M(1;j)\rightarrow 0 \notag
\end{align}
such that $M'\in \add(\widetilde{M}/M(1;j))$.
If such an exact sequence exists, then $g$ is surjective. 
However, there exist no surjective maps from each module in $\add(\widetilde{M}/M(1;j))$ to $M(1;j)$.
This is a contradiction. 
Thus there exists no $\tau$-tilting pairs $(M',0)$ such that $M\in \add M'$ and $(\widetilde{M},0)<(M',0)$. 
Thus $(\widetilde{M},0)$ is the Bongartz completion of $(M,0)$.

(2) By $\widetilde{M}=M\oplus X\oplus P$, we have an algebra isomorphism
\begin{align}\label{seq:matrix-alg}
\End_{A}(\widetilde{M})\cong\begin{bmatrix}
\Hom_{A}(M,M)&\Hom_{A}(X,M)&\Hom_{A}(P,M)\\
\Hom_{A}(M,X)&\Hom_{A}(X,X)&\Hom_{A}(P,X)\\
\Hom_{A}(M,P)&\Hom_{A}(X,P)&\Hom_{A}(P,P)
\end{bmatrix}
\end{align}
Since $X$ does not have $\top(P)$ as a composition factor, $\Hom_{A}(P,X)=0$ holds.
Furthermore, a non-zero morphism in $\Hom_{A}(M(1;j),P)$ factors through $M(1;\ell)$. Thus $\Hom_{A}(X,P)/[M](X,P)=0$ holds, where $[M](X,P)$ is a subspace of $\Hom_{A}(X,P)$ consisting of morphisms factoring through some module in $\add M$.
By \eqref{seq:matrix-alg}, we have an algebra isomorphism 
\begin{align}
\End_{A}(\widetilde{M})/[M]\cong (\End_{A}(X)/[M](X,X))\times(\End_{A}(P)/[M](P,P)).\notag
\end{align}
Since the class of Nakayama algebras is closed under taking factor algebras, we have the assertion.
\end{proof}

The following proposition tells us the shape of the Ext-quiver of a semibrick.

\begin{proposition}\label{prop:nakayama-key}
Let $\mathcal{S}(\neq \emptyset)$ be a semibrick in $\mod A$.
Then each connected component of $Q_{\mathcal{S}}$ is isomorphic to a linear quiver or a cyclic quiver. In particular, we have $\rho(Q_{\mathcal{S}})\leq 1$.
Furthermore, $\rho(Q_{\mathcal{S}})=1$ if and only if $Q_{\mathcal{S}}$ contains a cyclic quiver.
\end{proposition}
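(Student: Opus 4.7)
The plan is to combine Corollary \ref{cor:Qs=QM} with Lemma \ref{lem:end-bon-com} and an induction on the size of $\tau$-rigid pairs, in order to identify every Ext-quiver of a semibrick in $\mod A$ with the Gabriel quiver of a product of Nakayama algebras.

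Since every Nakayama algebra is representation-finite, $A$ is $\tau$-tilting finite, and Corollary \ref{cor:Qs=QM} gives $Q_{\mathcal{S}}\cong Q_{\simp(A(M,P))}$ for some $(M,P)\in\trigidp(A)$. The task therefore reduces to the following claim: \emph{for every $(M,P)\in\trigidp(A)$, the $\tau$-tilting reduction $A(M,P)$ is a product of (linear or cyclic) Nakayama algebras.} I would prove this by induction on $|M\oplus P|$, the total number of indecomposable summands. The base case $(M,P)=(0,0)$ is immediate since $A(0,0)=A$. For the inductive step, assume $|M\oplus P|\geq 1$ and distinguish two cases. If $M$ admits a non-projective indecomposable summand $N$, then after a cyclic relabeling we may take $N=M(1;l)$ with $l<\min\{\ell(P(1)),n\}$, and Lemma \ref{lem:end-bon-com}(2) asserts that $A(N,0)$ is a product of Nakayama algebras. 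Iterating Proposition \ref{prop:jred} (transitivity of $\tau$-tilting reduction), $A(M,P)$ is itself a $\tau$-tilting reduction of $A(N,0)$ by a $\tau$-rigid pair of strictly smaller total size, and the inductive hypothesis applied componentwise to the Nakayama factors of $A(N,0)$ closes this case. If instead every indecomposable summand of $M\oplus P$ is projective, then the Bongartz completion $(M^{+},P)$ consists entirely of indecomposable projectives of $A$, and $A(M,P)$ is naturally identified with the idempotent quotient of $A$ obtained by deleting the vertices indexing $M\oplus P$ from the cyclic quiver $\tilde{\mathsf{A}}_{n}^{\leftarrow}$; this quotient has Gabriel quiver equal to a disjoint union of linear quivers, so it is a product of linear Nakayama algebras.

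Given the claim, each connected component of $Q_{\mathcal{S}}$ is either a linear quiver $\mathsf{A}_{m}^{\leftarrow}$ or a cyclic quiver $\tilde{\mathsf{A}}_{m}^{\leftarrow}$. A linear quiver is acyclic, so it contributes spectral radius $0$ by Lemma \ref{lem:spectrad-subquiver}(2). The adjacency matrix of $\tilde{\mathsf{A}}_{m}^{\leftarrow}$ is a cyclic permutation matrix whose eigenvalues are the $m$-th roots of unity, giving spectral radius exactly $1$. Lemma \ref{lem:spectrad-subquiver}(3) then yields $\rho(Q_{\mathcal{S}})\leq 1$, with equality if and only if $Q_{\mathcal{S}}$ contains at least one cyclic component.

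The main obstacle will be the inductive step when $(M,P)$ mixes non-projective and projective summands. There one must verify that iterating Lemma \ref{lem:end-bon-com} does not inadvertently produce a non-Nakayama factor, i.e., that the Bongartz completion is explicit enough to see that the residual $\tau$-rigid pair in $\mod A(N,0)$ is correctly tracked and has exactly one fewer indecomposable summand. Once this bookkeeping is in hand, the induction applies cleanly componentwise over the factors of $A(N,0)$ produced at each stage.
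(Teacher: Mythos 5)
Your overall strategy mirrors the paper's: reduce a $\tau$-rigid pair one indecomposable summand at a time, identify the one-step reduction as a Nakayama algebra (Lemma \ref{lem:end-bon-com} for a non-projective summand, an idempotent quotient for a projective one), and induct. The paper inducts on $|A|$ and tracks the wide subcategory $\mathcal{W}$ through the equivalence $F:\mathcal{W}(M,0)\to\mod B$ from Proposition \ref{prop:DIRRT-thm412}, whereas you induct on $|M\oplus P|$ and track the $\tau$-rigid pair itself via a ``transitivity'' of Jasso reduction that Proposition \ref{prop:jred} does not literally supply (it gives a single poset isomorphism, not a compatibility of iterated reductions with iterated endomorphism quotients). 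That compatibility is true, but it is an extra fact you would need to establish; the paper sidesteps it by moving the wide subcategory along the equivalence instead of the pair.

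There is a genuine gap in your Case 2. When every summand of $M\oplus P$ is projective you assert that the Bongartz completion $(M^{+},P)$ consists entirely of indecomposable projectives. This fails as soon as $P\neq 0$. For instance, take $A$ the radical-square-zero cyclic Nakayama algebra on $\tilde{\mathsf{A}}_{2}^{\leftarrow}$ and $(M,P)=(0,P(1))$: the only $\tau$-tilting pair with projective part $P(1)$ is $(S(2),P(1))$, so the Bongartz completion is $(S(2),P(1))$ and $S(2)$ is not projective. The intended conclusion that $A(M,P)$ is an idempotent quotient of $A$ can be recovered, but not by the stated route; one should argue as the paper does, handling $(0,P)$ directly via $\mathcal{W}(0,P)=P^{\perp}\cong\mod(A/AeA)$, and reducing by a single indecomposable projective summand $M$ at a time, where $M^{+}=A$ and $\End_{A}(A)/[M]\cong A/A\varepsilon A$. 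With those repairs (and the transitivity point addressed, or replaced by tracking the wide subcategory as in Proposition \ref{prop:DIRRT-thm412}), your spectral-radius computation at the end is correct and the argument closes.
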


\begin{proof}
Let $\mathcal{S}$ be a semibrick in $\mod A$ and let $\mathcal{W}$ be the corresponding wide subcategory of $\mod A$ by Proposition \ref{prop:ringel}. 
Note that $\mathcal{S}=\simp(\mathcal{W})$ and $Q_{\mathcal{S}}=Q_{\simp(\mathcal{W})}$.
Since $A$ is $\tau$-tilting finite, it follows from Proposition \ref{prop:DIRRT-thm418} that $\mathcal{W}$ is a $\tau$-perpendicular category, that is, there exists a $\tau$-rigid pair $(N,P)$ for $\mod A$ such that $\mathcal{W}=\mathcal{W}(N,P)$.

We show the assertion by induction on $n:=|A|$.
If $n=1$ holds, then we have $\wide A=\{ \mod A, 0\}$. Thus the assertion clearly holds.
Assume that $n\neq 1$.
If $N=0$ holds, then $\mathcal{W}$ is equivalent to $\mod (A/AeA)$, where $\add eA=\add P$.
Since $A/AeA$ is a Nakayama algebra, we have the assertion.
If $N\neq 0$ holds, then we can take an indecomposable direct summand $M$ of $N$.
Let $(M^{+},0)$ be the Bongartz completion of $(M,0)$.
Then $B:=\End_{A}(M^{+})/[M]$ is a Nakayama algebra and $|B|=|A|-1$.
Indeed, if $M$ is projective, then we have $M^{+}=A$ and $\End_{A}(M^{+})/[M]\cong A/A\varepsilon A$ for some idempotent $\varepsilon\in A$. 
On the other hand, if $M$ is non-projective, then it follows from Lemma \ref{lem:end-bon-com} that $B$ is a Nakayama algebra.
Furthermore, by Proposition \ref{prop:DIRRT-thm412}, we have an equivalence of (abelian) categories 
\begin{align}
F: \mathcal{W}(M,0)\rightarrow \mod B.\notag
\end{align}
This implies that $\mathcal{W}':=F(\mathcal{W})$ is a wide subcategory of $\mod B$ and $\mathcal{W}\cong\mathcal{W}'$. In particular, we have a quiver isomorphism $Q_{\simp(\mathcal{W})}\cong Q_{\simp(\mathcal{W}')}$.
By the induction hypothesis, each connected component of $Q_{\simp(\mathcal{W}')}$ is isomorphic to $\mathsf{A}_{n}^{\leftarrow}$ or $\tilde{\mathsf{A}}_{n}^{\leftarrow}$ for some $n\geq 1$.
The proof is complete.
\end{proof}

Now, we are ready to prove Theorem \ref{thm:FPdim_of_Nakayama}(2).

\begin{proof}[Proof of Theorem \ref{thm:FPdim_of_Nakayama}(2)]
Assume that the Gabriel quiver of $A$ is isomorphic to $\tilde{\mathsf{A}}_{n}^{\leftarrow}$ for some $n\geq 1$.
By Proposition \ref{prop:nakayama-key}, we have
\begin{align}
1=\rho(\tilde{\mathsf{A}}_{n}^{\leftarrow})\leq \FPdim(A):=\sup\{ \rho(Q_{\mathcal{S}})\mid \mathcal{S}\in \sbrick(A)\}\leq 1. \notag
\end{align}
The proof is complete.
\end{proof}

\section{Frobenius--Perron dimension of generalized preprojective algebras of Dynkin type}

In this section, we study the Frobenius--Perron dimension of a generalized preprojective algebra of Dynkin type in the sense of Geiss--Leclerc--Schr\"{o}er \cite{GLS17}.

\subsection{Cartan matrices and Coxeter groups}

In this subsection, we recall the definitions of  Cartan matrices and Coxeter groups associated to Dynkin diagrams:

\begin{align}
\begin{array}{ll}
\mathsf{A}_{n}: \xymatrix@C=4mm{1 \ar@{-}[r]& 2 \ar@{-}[r] & \cdots \ar@{-}[r]& n-1 \ar@{-}[r]& n}&
\mathsf{E}_{6}: \xymatrix@C=4mm@R=4mm{1 \ar@{-}[r]& 2 \ar@{-}[r] & 3 \ar@{-}[d]\ar@{-}[r]& 5  \ar@{-}[r]& 6\\&&4&&}\\
\mathsf{B}_{n}: \xymatrix@C=4mm{1 \ar@{-}[r]& 2 \ar@{-}[r] & \cdots \ar@{-}[r]& n-1 \ar@{=}|(0.7)@{>}[r]& n}&
\mathsf{E}_{7}: \xymatrix@C=4mm@R=4mm{1 \ar@{-}[r]& 2 \ar@{-}[r] & 3 \ar@{-}[d]\ar@{-}[r]& 5  \ar@{-}[r]& 6 \ar@{-}[r]& 7\\&&4&&}\\
\mathsf{C}_{n}: \xymatrix@C=4mm{1 \ar@{-}[r]& 2 \ar@{-}[r] & \cdots \ar@{-}[r]& n-1 &\ar@{=}|(0.4)@{>}[l] n}&
\mathsf{E}_{8}: \xymatrix@C=4mm@R=4mm{1 \ar@{-}[r]& 2 \ar@{-}[r] & 3 \ar@{-}[d]\ar@{-}[r]& 5  \ar@{-}[r]& 6 \ar@{-}[r]& 7 \ar@{-}[r]& 8\\&&4&&}\\
\mathsf{D}_{n}: \xymatrix@C=4mm@R=4mm{1 \ar@{-}[r]& 2 \ar@{-}[r] & \cdots \ar@{-}[r]& n-2 \ar@{-}[d] \ar@{-}[r]& n\\&&&n-1&}&
\mathsf{F}_{4}: \xymatrix@C=4mm{1 \ar@{-}[r]& 2 \ar@{=}|(0.6)@{>}[r]& 3\ar@{-}[r] & 4}\hspace{5mm}
\mathsf{G}_{2}: \xymatrix@C=4mm{1 \ar@3{-}|(0.6)@{>}[r]& 2}
\end{array}\notag
\end{align}
Dynkin diagrams $\mathsf{A}_{n}$, $\mathsf{D}_{n}$, $\mathsf{E}_{6}$, $\mathsf{E}_{7}$, and $\mathsf{E}_{8}$ are called \emph{simply-laced} Dynkin diagrams.

We start with recalling the definition of Cartan matrices of Dynkin type.
Let $\mathsf{X}_{n}$ be a Dynkin diagram with $n$ vertices.
We define an $n\times n$-matrix $C(\mathsf{X}_{n}):=(c_{ij})$, called the \emph{Cartan matrix} of $\mathsf{X}_{n}$, as follows:
\begin{itemize}
\item if $i=j$, then $c_{ii}=2$,
\item if $i\neq j$, then $(c_{ij},c_{ji})=
\begin{cases}
\ (-1,-1) &\text{if $\xymatrix{i\ar@{-}[r]&j}$},\\
\ (-1,-2) &\text{if $\xymatrix{i\ar@{=}|(0.6)@{>}[r]&j}$},\\
\ (-1,-3) &\text{if $\xymatrix{i\ar@3{-}|(0.6)@{>}[r]&j}$},\\
\ (0,0) &\text{otherwise.}
\end{cases}$
\end{itemize}
It is known that a Cartan matrix $C:=C(\mathsf{X}_{n})$ has a \emph{symmetrizer}, that is, there exists a diagonal matrix $D=\diag(c_{1},c_{2},\ldots,c_{n})$ such that $c_{1}, c_{2},\ldots, c_{n}\in \mathbb{Z}_{\geq 1}$ and $DC$ is symmetric.
Although a symmetrizer $D$ is not unique, it is written as 
\begin{align}
D=
\begin{cases}
\ c\diag(1,\ldots,1) & \text{if $\mathsf{X}_{n}=\mathsf{A}_{n},\mathsf{D}_{n},\mathsf{E}_{n}$},\\
\ c\diag(2,\dots,2,1) & \text{if $\mathsf{X}_{n}=\mathsf{B}_{n}$},\\
\ c\diag(1,\dots,1,2) & \text{if $\mathsf{X}_{n}=\mathsf{C}_{n}$},\\
\ c\diag(2,2,1,1) & \text{if $\mathsf{X}_{n}=\mathsf{F}_{n=4}$},\\
\ c\diag(3,1) & \text{if $\mathsf{X}_{n}=\mathsf{G}_{n=2}$},
\notag
\end{cases}
\end{align}
where $c$ is a positive integer. We say that a symmetrizer $D$ is \emph{minimal} if $c=1$.

Next, we recall the definitions of a Coxter group and its right weak order.
For details, refer to \cite{BB05}.
Let $C$ be the Cartan matrix of a Dynkin diagram $\mathsf{X}_{n}$.
The Coxeter group $W=W(C)$ associated to $C$ is defined by generators $s_{1},s_{2},\ldots,s_{n}$ and relations $(s_{i}s_{j})^{m_{ij}}=1$, where 
\begin{align}
m_{ij}=\begin{cases}
1 &\text{if $\xymatrix{i=j}$},\\
2 &\text{if $\xymatrix{i&j}$},\\
3 &\text{if $\xymatrix{i\ar@{-}[r]&j}$},\\
4 &\text{if $\xymatrix{i\ar@{=}|(0.6)@{>}[r]&j}$},\\
6 &\text{if $\xymatrix{i\ar@3{-}|(0.6)@{>}[r]&j}$}.
\end{cases}\notag
\end{align}
Each element $w\in W$ can be written in the form $w=s_{i_{1}}s_{i_{2}}\cdots s_{i_{\ell}}$.
If $\ell$ is minimum, then it is called the \emph{length} of $w$ and denoted by $l(w)$.
In this case, an expression $s_{i_{1}}s_{i_{2}}\cdots s_{i_{\ell}}$ of $w$ is said to be  \emph{reduced} of $w$. Note that a (reduced) expression of $w$ is not necessarily unique. 

For elements $u,w\in W$, we write $u\leq_{R} w$ if there exist $s_{i_1},\dots,s_{i_k}\in W$ such that
\begin{align}
w=us_{i_{1}}\cdots s_{i_{k}}\text{ and }l(w)=l(u)+k.\notag
\end{align}
It is obvious that $\leq_{R}$ gives a partial order on $W$.
We call this partial order the \emph{right weak order} on $W$. 
It is known that $(W,\le_{R})$ forms a finite lattice (for example, see \cite[Section\;3.2]{BB05}).
By definition, the minimum element of $(W,\leq_{R})$ is the identity $1\in W$.
Furthermore, since $(W,\le_{R})$ is a finite lattice, the maximum element (i.e., the longest element) $w_{0}\in W$ exists. 
For a non-empty subset $J$ of $\{s_{1},s_{2},\ldots, s_{n}\}$, we set $w_{0}(J):=\vee J$.

\begin{lemma}[{\cite[Proposition 3.1.6 and Lemma 3.2.4]{BB05}}]\label{lem:coxeter-sub}
Let $J$ be a non-empty subset of $\{s_{1},s_{2},\ldots, s_{n}\}$.
Then the following statements hold.
\begin{itemize}
\item[(1)] If $u\leq_{R} w$ holds, then we have $[u,w]\simeq [1,u^{-1}w]$.
\item[(2)] If $w\leq_R w s_{j}$ holds for each $s_{j}\in J$, then we have
\begin{align}
\vee\{ws_j\mid s_{j}\in J\}=ww_{0}(J).\notag
\end{align}
\end{itemize}
\end{lemma}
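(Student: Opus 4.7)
The plan for part (1) is to exhibit the left-translation map $\phi : [u,w] \to [1, u^{-1}w]$ defined by $\phi(v) := u^{-1}v$ as a poset isomorphism. First I would check well-definedness: given $u \leq_R v \leq_R w$, there exist sequences of simple reflections realizing $v = u\cdot x_1$ with $l(v) = l(u) + l(x_1)$ and $w = v\cdot x_2$ with $l(w) = l(v) + l(x_2)$, so $u^{-1}v = x_1$ has length $l(v) - l(u) \geq 0$, meaning $1 \leq_R u^{-1}v$, and $u^{-1}w = x_1 x_2$ with $l(u^{-1}w) = l(u^{-1}v) + l(x_2)$, meaning $u^{-1}v \leq_R u^{-1}w$. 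The inverse map $v' \mapsto u\,v'$ is well-defined by a symmetric length-additivity computation, and both maps are order-preserving.

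For part (2), I would first establish the key length-additivity fact: if $w \leq_R ws_j$ for every $s_j \in J$, then $l(w\,v) = l(w) + l(v)$ for every $v \in W_J$, where $W_J$ is the parabolic subgroup generated by $J$. This is a standard result in Coxeter theory (the deletion/exchange property applied to a reduced expression of $v$ inside $W_J$), and in particular yields $w \leq_R w\,w_0(J)$ together with $l(w\,w_0(J)) = l(w) + l(w_0(J))$. Applying part (1) to the interval $[w, w\,w_0(J)]$, I obtain a poset isomorphism with $[1, w_0(J)]$ under which each $ws_j$ corresponds to $s_j$.

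Next, I would transfer the computation of $\vee\{ws_j\}$ into that interval. Since $W$ is a finite lattice, the join $\vee_W\{ws_j \mid s_j \in J\}$ exists; because every $ws_j$ dominates $w$ and is dominated by $w\,w_0(J)$, this join lies in $[w, w\,w_0(J)]$, so it agrees with the join taken inside the interval. Under the isomorphism from part (1), this join corresponds to $\vee\{s_j \mid s_j \in J\}$ computed in $[1, w_0(J)]$, which by the definition $w_0(J) := \vee J$ equals $w_0(J)$. Pushing back through $\phi^{-1}$ yields $\vee\{ws_j\} = w\,w_0(J)$, as desired.

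The main obstacle I expect is the length-additivity claim $l(w\,w_0(J)) = l(w) + l(w_0(J))$ under the no-descent hypothesis. This is not formal from the definition of $\leq_R$ alone; it is the classical fact that a minimal-length coset representative of $wW_J$ satisfies $l(w\,v) = l(w) + l(v)$ for all $v \in W_J$, which I would either cite directly from \cite{BB05} or prove by induction on $l(v)$ via the exchange condition. Once this is in hand, the rest is a routine unwinding through the isomorphism of part (1).
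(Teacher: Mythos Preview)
The paper does not supply its own proof of this lemma; it simply cites \cite[Proposition 3.1.6 and Lemma 3.2.4]{BB05}. Your proposal is a correct reconstruction of the standard argument behind those cited results: the left-translation map for part (1) is exactly the isomorphism one finds in the reference, and your reduction of part (2) to part (1) via the parabolic length-additivity fact is the expected route. The one substantive input you correctly flag as non-formal---that $l(wv)=l(w)+l(v)$ for all $v\in W_J$ whenever $w$ has no right descent in $J$---is precisely the content supplied by the cited reference, so your plan to invoke it directly is appropriate.
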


\subsection{Generalized preprojective algebras}

In this subsection, we recall the definition of generalized preprojective algebras of Dynkin type. For details, refer to \cite{GLS17}.
Let $\mathsf{X}_{n}$ be a Dynkin diagram and $C=(c_{ij})$ the Cartan matrix of $\mathsf{X}_{n}$ with symmetrizer $D=\diag(c_{1}, \ldots, c_{n})$.
Note that, if $c_{ij}<0$, then $\gcd(|c_{ij}|,|c_{ji}|)=1$.
Fix an acyclic quiver $\Delta$ satisfying the condition that $c_{ij}\neq 0$ if and only if there exists an edge (in the underlying graph of $\Delta$) between $i$ and $j$.
We set 
\begin{align}
\Omega:=\{(i,j)\mid \text{there exists an arrow from $i$ to $j$ in $\Delta$}\},\notag
\end{align}
$\Omega^{\ast}:=\{ (i,j)\mid (j,i)\in \Omega\}$ and $\overline{\Omega}:=\Omega\sqcup\Omega^{\ast}$.

Define a quiver $Q=(Q_{0},Q_{1})$ as 
\begin{align}
&Q_{0}:=\{ 1,2,\ldots, n\},\notag\\
&Q_{1}:=\{ a_{ij}:i\rightarrow j\mid (i,j)\in \overline{\Omega}\}\sqcup\{ \epsilon_{i}:i\rightarrow i \mid i\in Q_{0}\}, \notag
\end{align}
and elements $\rho_{1},\rho_{2},\rho_{3}\in \Bbbk Q$ as
\begin{align}
&\rho_{1} = \underset{i=1}{\overset{n}{\sum}} \epsilon_i^{c_i},\notag\\
&\rho_{2} = \underset{(i,j)\in \overline{\Omega}}{\sum} \left(\epsilon_i^{|c_{ji}|}a_{ij}-a_{ij}\epsilon_j^{|c_{ij}|}\right),\notag\\
&\rho_{3} = \sum_{(j,i)\in\overline{\Omega}}\sum_{f=0}^{|c_{ji}|-1}\operatorname{sgn}(i,j)\epsilon_{i}^{f}a_{ij}a_{ji}\epsilon_{i}^{|c_{ji}|-1-f},\notag
\end{align}
where $\operatorname{sgn}(i,j)=1$ if $(i,j)\in \Omega$ and $\operatorname{sgn}(i,j)=-1$ if $(i,j)\in \Omega^{\ast}$.
Then the bound quiver algebra $\Pi(C,D):=\Bbbk Q/\langle \rho_{1},\rho_{2},\rho_{3}\rangle$ does not depend on the choice of $\Delta$ (up to isomorphisms).
We call $\Pi(C,D)$ the \emph{generalized preprojective algebra} associated with $(C,D)$.
By \cite[Theorem 1.7]{GLS17}, the algebra $\Pi(C,D)$ is finite-dimensional.

\begin{remark}
In general, generalized preprojective algebras can be defined for symmetrizable generalized Cartan matrices. By \cite[Theorem 1.7]{GLS17}, the generalized preprojective algebra of a symmetrizable generalized Cartan matrix $C$ is finite-dimensional if and only if $C$ is of Dynkin type. In this paper, we focus on finite-dimensional algebras and, consequently, restrict our attention to generalized preprojective algebras of Dynkin type.
\end{remark}

We give a relationship between $\tau$-tilting pairs for $\Pi(C,D)$ and the Coxeter group associated to $C$. For each $i\in Q_{0}$, let $I_{i}:=A(1-e_{i})A$. For a reduced expression $w=s_{i_{1}}s_{i_{2}}\cdots s_{i_{\ell}}$, we put $I_{w}:=I_{i_{1}}I_{i_{2}}\cdots I_{i_{\ell}}$. 

\begin{proposition}[{\cite[Theorem 2.30]{Miz14}}, {\cite[Theorem 5.16, Theorem 5.17]{FG19}}]\label{prop:miz}
Let $\Pi= \Pi(C,D)$ be the generalized preprojective algebra and let $W=W(C)$ be the Coxeter group.
Then the assignment $w\mapsto I_{w}$ induces a poset isomorphism
\begin{align}
(W,\le_R^{\mathrm{op}})\rightarrow (\sttilt(\Pi^{\mathrm{op}}), \leq), \notag
\end{align}
where $\le_{R}^{\mathrm{op}}$ is the opposite order of the right weak order on $W$.
In particular, $\Pi$ and $\Pi^{\mathrm{op}}$ are $\tau$-tilting finite.
\end{proposition}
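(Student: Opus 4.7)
The plan is to construct $\Phi: W\to \sttilt(\Pi^{\mathrm{op}})$ by $w\mapsto I_{w}$ and verify, in order, that (a) $\Phi$ is well-defined (i.e., $I_{w}$ is independent of the chosen reduced expression), (b) each $I_{w}$ actually represents a basic $\tau$-tilting pair for $\Pi^{\mathrm{op}}$, (c) $\Phi$ is order-reversing with respect to $\le_{R}$, and (d) $\Phi$ is bijective. The $\tau$-tilting finiteness of $\Pi$ (and, by symmetry, $\Pi^{\mathrm{op}}$) will then be automatic from the finiteness of $W$ in Dynkin type.

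For (b), I would first verify that each generator $I_{i}=\Pi(1-e_{i})\Pi$ gives a $\tau$-tilting $\Pi^{\mathrm{op}}$-module (equivalently, a support $\tau$-tilting pair). One can compute a 2-term projective presentation of $I_{i}$ using the defining relations $\rho_{1},\rho_{2},\rho_{3}$, and check $\Hom(I_{i},\tau I_{i})=0$ directly using the Nakayama functor and the symmetry properties of $\Pi$ built into its construction. Next, I would show that multiplication on the right by $I_{i}$ implements the irreducible left mutation of $\tau$-tilting pairs at the vertex $i$: given a basic $\tau$-tilting pair $T$, the product $TI_{i}$ is either equal to $T$ (when $i$ is "frozen") or is the mutation $\mu_{i}^{-}(T)$. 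This two-step description is essentially the content of the Iyama--Reiten type mutation rule, applied to the generalized preprojective algebra setting.

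The heart of the argument is (a). Here I would verify that the ideals $I_{i}$ satisfy the braid relations dictated by the Cartan matrix $C$: namely, $(I_{i}I_{j})^{m_{ij}/2}=(I_{j}I_{i})^{m_{ij}/2}$ when $m_{ij}$ is even, and the analogous identity when $m_{ij}$ is odd. For $m_{ij}=2$ (disjoint vertices) this follows from $I_{i}I_{j}=I_{j}I_{i}$, and for $m_{ij}=3$ the argument in Mizuno's paper for the simply-laced case can be adapted. For $m_{ij}\in\{4,6\}$ the braid identity must be checked using $\rho_{3}$ and the identities $\varepsilon_{i}^{c_{i}}=0$; this is the genuinely non-trivial computation and the main obstacle of the proof, and it is precisely where the generalization from \cite{Miz14} to \cite{FG19} does real work. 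One also needs $I_{i}^{2}\subsetneq I_{i}$, which, combined with the braid relations, is what guarantees that $I_{w}$ depends only on $w$ and that $l(w)$ mutations are actually performed.

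For (c) and (d), after (a) and (b) are in hand, the length additivity $l(ws_{i})=l(w)+1 \iff T I_{w}I_{i}$ is obtained from $TI_{w}$ by a proper (non-trivial) left mutation proves that $\Phi$ reverses the covering relations of the Hasse quivers. Order-reversal of $\le_{R}$ against $\le$ on $\sttilt$ follows, and injectivity is read off from $l(w)=l(w')$ being forced by $I_{w}=I_{w'}$ combined with the uniqueness part of the standard braid/Matsumoto theorem. For surjectivity I would induct on the distance from the maximum element $(\Pi,0)\in\sttilt(\Pi^{\mathrm{op}})$: by Proposition \ref{prop:kase35min}(2), every non-maximum $\tau$-tilting pair $T'$ is a direct successor of some $T$, so $T'=\mu_{i}^{-}(T)=TI_{i}$ for some $i$; induction yields $T=I_{w}$ and hence $T'=I_{ws_{i}}$. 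The finiteness statement is then immediate from $|W|<\infty$, and $\tau$-tilting finiteness of $\Pi$ follows by passing to the opposite algebra.
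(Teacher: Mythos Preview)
The paper does not supply a proof of this proposition: it is quoted verbatim as a result from \cite{Miz14} (simply-laced case) and \cite{FG19} (general symmetrizable case), and is used as a black box throughout Section~6. There is therefore no ``paper's own proof'' to compare your proposal against. Your outline is a reasonable sketch of the strategy followed in those references---well-definedness via braid relations on the ideals $I_i$, mutation interpretation of right multiplication by $I_i$, and then order-reversal and bijectivity.

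One genuine gap in your sketch: your surjectivity argument invokes Proposition~\ref{prop:kase35min}, whose hypothesis is that $A$ is $\tau$-tilting finite, while $\tau$-tilting finiteness of $\Pi^{\mathrm{op}}$ is part of the conclusion you are trying to establish. This is circular as written. The standard repair is to first observe that the image of $\Phi$ is a finite set of support $\tau$-tilting pairs containing $(\Pi^{\mathrm{op}},0)$ and closed under mutation (this is what step~(b) gives you), and then appeal to the general fact that an algebra admitting a finite mutation-closed component of its exchange graph is automatically $\tau$-tilting finite; connectedness of the exchange graph then yields surjectivity. Alternatively, one can argue directly via torsion classes, as in \cite{Miz14}, without passing through Proposition~\ref{prop:kase35min} at all.
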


The proposition above says that $I_{w}$ can be uniquely extended to a $\tau$-tilting pair $\underline{I_{w}}:=(I_{w}, P)$ for some projective module $P$.
For simplicity, we identify $\underline{I_{w}}$ with $I_{w}$.

\subsection{The Frobenius--Perron dimension}
The aim of this subsection is to show the following theorem.

\begin{theorem}\label{thm:FPdim-preproj}
Let $C$ be a Cartan matrix of Dynkin type with a symmetrizer $D$ and let $A:= \Pi(C,D):=\Bbbk Q/\langle \rho_{1},\rho_{2},\rho_{3} \rangle$ be the generalized preprojective algebra associated with $(C,D)$.
Then $\FPdim(A)=\rho(Q)$ holds. Furthermore, the spectral radius $\rho(Q)$ is given by the tables in Theorem \ref{mainthm:FPdim_NA_PPA}.
\end{theorem}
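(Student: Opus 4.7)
The plan is to prove the equality $\FPdim(A) = \rho(Q)$ as a two-sided inequality, then read off the tabulated values from a direct spectral computation. The lower bound $\rho(Q) \leq \FPdim(A)$ is immediate: the set of simple modules $\simp(A)$ is a semibrick whose Ext-quiver is, by definition, the Gabriel quiver $Q$.

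For the upper bound, I need to show $\rho(Q_{\mathcal{S}}) \leq \rho(Q)$ for every $\mathcal{S} \in \sbrick(A)$. By Corollary~\ref{cor:Qs=QM} this is equivalent to bounding the spectral radius of the Gabriel quiver of every $\tau$-tilting reduction $A(X)$. Combining Proposition~\ref{prop:miz} with Proposition~\ref{prop:jred}, each reduction $A(X)$ corresponds to an interval in $(W,\leq_R)$ for $W=W(C)$, and by Lemma~\ref{lem:coxeter-sub}(1) every such interval is order-isomorphic to a principal lower interval $[1,v]$. Via Proposition~\ref{prop:quiver-isom}, the loop-free Ext-quiver $Q_{A(X)}^{\circ}$ coincides with the lattice-theoretic quiver $Q(u(X))$, which I would analyze through the Coxeter combinatorics of $[1,v]$; using Lemma~\ref{lem:coxeter-sub}(2) to resolve the relevant joins $ws_j\vee ws_k$, the goal is to show that $Q(u(X))$ embeds into $Q^{\circ}$ (the double of the underlying graph of $\mathsf{X}_n$), so that $\rho(Q(u(X))) \leq \rho(Q^{\circ})$. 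For the loops of $Q_{A(X)}$, the equivalence $\mathcal{W}(M,P) \simeq \mod A(X)$ from Proposition~\ref{prop:DIRRT-thm412} identifies self-extensions of simples of $A(X)$ with self-extensions of certain bricks inside $\mod A$, and an analysis of bricks via the defining relations $\rho_1,\rho_2,\rho_3$ of $\Pi(C,D)$ should show that these self-extensions concentrate only at ``loop vertices'' of $Q$. Piecing the two bounds together yields $\rho(Q_{A(X)}) \leq \rho(Q)$.

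To obtain the explicit values in the tables, I would diagonalize the adjacency matrix $M = B + L$ of $Q$, where $B$ is the adjacency matrix of the doubled underlying graph of $\mathsf{X}_n$ and $L$ is the diagonal loop-indicator matrix ($L_{ii}=1$ iff $c_i \geq 2$). In the simply-laced minimal case, $L=0$ and the spectrum is the classical Dynkin spectrum, giving the Coxeter values $2\cos(\pi/h)$. In every non-minimal case, $L=I$ and $\rho(M)=\rho(B)+1$, producing the ``$1+2\cos(\pi/h)$'' entries. The non-simply-laced minimal cases $\mathsf{B}_n,\mathsf{C}_n,\mathsf{F}_4,\mathsf{G}_2$ require a Chebyshev-type three-term recurrence along the path of the diagram, where the asymmetric placement of the end-loop shifts the boundary condition and produces the shifted-argument closed forms.

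The main obstacle is the structural step that every $\tau$-tilting reduction $A(X)$ has both its loop-free Ext-quiver $Q_{A(X)}^{\circ}$ and its loop pattern dominated by those of $Q$. The loop-free domination is a Coxeter-combinatorial statement about $[1,v] \subset (W,\leq_R)$, but the loop bookkeeping is genuinely delicate in the minimal non-simply-laced setting: a direct application of Theorem~\ref{thm:ul_bound} only gives $\FPdim(A) \leq \FPdim(\sttilt(A)) + d_b$, which for $\Pi(C(\mathsf{B}_2),D_{\min})$ evaluates to $2$ rather than the sharp value $(1+\sqrt{5})/2$. Closing this gap requires a finer brick-by-brick argument using the explicit form of the relations of $\Pi(C,D)$ to ensure that no semibrick accumulates a loop at a vertex where $c_i = 1$.
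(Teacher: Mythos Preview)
Your lower bound and the spectral computations for the tables are correct and match the paper. The genuine gap is exactly where you flag it: the loop bookkeeping. Your plan separates the problem into (i) embedding $Q_{A(X)}^{\circ}$ into $Q^{\circ}$ via Coxeter combinatorics, and (ii) controlling loops by an ad hoc brick-by-brick analysis of the relations $\rho_1,\rho_2,\rho_3$. Even granting (i), step (ii) is not carried out, and more seriously, the two steps must be compatible: you need the loops of $Q_{A(X)}$ to sit at vertices that, under the \emph{same} embedding $\phi$ realizing (i), land on loop vertices of $Q$. Without that compatibility, the spectral bound $\rho(Q_{A(X)})\leq\rho(Q)$ does not follow. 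As you note yourself, the crude bound from Theorem~\ref{thm:ul_bound} overshoots in the minimal non-simply-laced cases, so something sharper is genuinely required.

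The paper avoids this bifurcation entirely. Its key step is Proposition~\ref{prop:mur-2424}: every wide subcategory of $\mod A^{\mathrm{op}}$ is equivalent, as an abelian category, to $\mod(A^{\mathrm{op}}/\langle e\rangle)$ for some idempotent $e$. Hence the Ext-quiver of \emph{any} semibrick is the Gabriel quiver of $A^{\mathrm{op}}/\langle e\rangle$, which is a subquiver of $Q$ with loops included, and Lemma~\ref{lem:spectrad-subquiver}(1) finishes. The proof of Proposition~\ref{prop:mur-2424} does use the Coxeter-theoretic ingredients you identified (Proposition~\ref{prop:miz}, Lemma~\ref{lem:coxeter-sub}, Proposition~\ref{prop:kase35min}) to reduce an arbitrary interval $[I_{w'},I_w]$ to one of the form $[I_{w_0(J)},I_1]$, but the decisive extra input is an equivalence $I_{w_0(J)}^{\perp}\to \Fac I_w\cap I_{w'}^{\perp}$ coming from Murakami's work on generalized preprojective algebras \cite[Propositions~2.42 and~2.44]{Mur22}. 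This is the structural fact that does the loop-and-arrow accounting for you in one stroke; without it, your approach would need to reproduce that equivalence by hand.
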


To prove Theorem \ref{thm:FPdim-preproj}, the following proposition plays an important role.

\begin{proposition}\label{prop:mur-2424}
Let $A:=\Pi(C,D)^{\mathrm{op}}$.
Let $\mathcal{W}$ be a wide subcategory of $\mod A$.
Then there exists an idempotent $e\in A$ such that $\mathcal{W}$ is equivalent to $\mod (A/\langle e\rangle)$. 
\end{proposition}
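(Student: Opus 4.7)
The plan combines the $\tau$-tilting description of wide subcategories (Propositions~\ref{prop:DIRRT-thm412} and~\ref{prop:DIRRT-thm418}) with the Coxeter-group interpretation of $\sttilt(A)$ from Proposition~\ref{prop:miz}. For any wide subcategory $\mathcal{W}$ of $\mod A$, there is a $\tau$-rigid pair $(M,P)$ with $\mathcal{W}=\mathcal{W}(M,P)\cong \mod A(M,P)$, and the goal is to identify the $\tau$-tilting reduction $A(M,P)$ with $A/\langle e\rangle$ for an idempotent of the form $e=\sum_{i\notin J}e_i$ coming from a subset $J$ of vertices.

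Under the antitone isomorphism $(W,\le_R^{\mathrm{op}})\cong (\sttilt(A),\le)$ sending $w\mapsto I_w$, let $u\in W$ correspond to the Bongartz completion $(M^{+},P)$. By Proposition~\ref{prop:kase35min}(2), the co-Bongartz completion $(M^{-},P^{-})$ is the meet in $\sttilt(A)$ of $(M^{+},P)$ with some set of its direct successors. Direct successors of $I_u$ correspond under the bijection to elements $us_j$ with $u<_R us_j$, and the meet in $(\sttilt(A),\le)$ translates to a join in $(W,\le_R)$. Hence $(M^{-},P^{-})$ corresponds to $\vee\{us_j\mid s_j\in J\}$ for some $J\subseteq\{s_j\mid u<_R us_j\}$, which by Lemma~\ref{lem:coxeter-sub}(2) equals $u\,w_0(J)$. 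Applying Lemma~\ref{lem:coxeter-sub}(1) to the interval $[u,u\,w_0(J)]$ then yields
\begin{align*}
\mathsf{Int}(M,P)\;\cong\;[u,u\,w_0(J)]\;\cong\;[1,w_0(J)]
\end{align*}
as posets. Setting $e:=\sum_{i\notin J}e_i$, the bound-quiver presentation of $A=\Pi(C,D)^{\mathrm{op}}$ gives $A/\langle e\rangle\cong \Pi(C_J,D_J)^{\mathrm{op}}$, where $(C_J,D_J)$ is the restriction of $(C,D)$ to the sub-Dynkin diagram on $J$. Applying Proposition~\ref{prop:miz} to this smaller algebra identifies $\sttilt(A/\langle e\rangle)$ with $([1,w_0(J)],\le_R^{\mathrm{op}})$ (its Coxeter group is the parabolic subgroup generated by $\{s_j\mid s_j\in J\}$, which coincides with $[1,w_0(J)]$ inside $W$), and this matches $\sttilt(A(M,P))\cong \mathsf{Int}(M,P)$ via Proposition~\ref{prop:jred}.

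The remaining task, and the main obstacle, is to upgrade this $\sttilt$-poset matching to an algebra isomorphism $A(M,P)\cong A/\langle e\rangle$, after which $\mathcal{W}\cong \mod(A/\langle e\rangle)$ follows immediately. I would argue by induction on $|J|$: the base case $|J|=1$ follows from a direct computation using the presentation $A(M,P)=\End_A(M^{+})/[M]$ in Proposition~\ref{prop:jred} together with the explicit preprojective relations $\rho_1,\rho_2,\rho_3$; the induction step proceeds by a single $\tau$-tilting mutation, using compatibility of iterated $\tau$-tilting reductions. A cleaner alternative would be to invoke an existing structural result asserting that $\tau$-tilting reductions of generalized preprojective algebras of Dynkin type remain in this class, with sub-Cartan data parameterized exactly by the set $J$ above; with such a result in hand, the algebra identification is immediate and the whole argument collapses to the Coxeter-theoretic reduction carried out in the second paragraph.
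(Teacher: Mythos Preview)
Your Coxeter-theoretic reduction---identifying the interval $\mathsf{Int}(M,P)$ with $[u,uw_0(J)]$ and hence with $[1,w_0(J)]$ via Lemma~\ref{lem:coxeter-sub}---is correct and matches the paper's argument closely. The divergence, and the real problem, is in how you finish.

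You arrive at a poset isomorphism $\sttilt(A(M,P))\cong \sttilt(A/\langle e\rangle)$ and then try to promote this to an algebra isomorphism $A(M,P)\cong A/\langle e\rangle$. This is a genuine gap: isomorphism of $\tau$-tilting lattices does not imply Morita equivalence, let alone algebra isomorphism, even among $\tau$-tilting finite algebras. Your proposed induction on $|J|$ does not repair this. The base case $|J|=1$ already asks you to compute $\End_A(M^{+})/[M]$ for an \emph{arbitrary} almost complete $\tau$-rigid pair and show it is the local algebra $\Bbbk[\epsilon]/(\epsilon^{c_j})$; this is not a ``direct computation'' from the presentation, since $M^{+}$ can be far from projective. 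The inductive step (``a single $\tau$-tilting mutation'') does not change the corank $|J|$, so it is unclear what is actually being inducted on or how the hypothesis is invoked.

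The paper avoids this obstacle entirely: it never claims $A(M,P)\cong A/\langle e\rangle$. Instead it observes that for the \emph{particular} $\tau$-rigid pair $(P,0)$ with $P$ projective (the one whose interval is $[I_{w_0(J)},I_1]$), the Bongartz completion is $(A,0)$, so $A(P,0)=\End_A(A)/[P]\cong A/\langle e\rangle$ tautologically, and hence $\mathcal{W}(P,0)=I_{w_0(J)}^{\perp}\cong \mod(A/\langle e\rangle)$. The remaining work is to produce an \emph{equivalence of categories} $I_{w_0(J)}^{\perp}\xrightarrow{\sim}\mathcal{W}(X)=\Fac I_w\cap I_{w'}^{\perp}$, and this is exactly the content of \cite[Propositions~2.42 and~2.44]{Mur22}, a structural result specific to (generalized) preprojective algebras. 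Your ``cleaner alternative'' is in spirit the right idea, but the precise input needed is this Murakami equivalence of $\tau$-perpendicular categories, not a statement about $\tau$-tilting reductions as algebras.
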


\begin{proof}
By Proposition \ref{prop:miz}, $A$ is $\tau$-tilting finite.
Let $\mathcal{W}$ be a wide subcategory of $\mod A$.
By Proposition \ref{prop:DIRRT-thm418}, there exists a $\tau$-rigid pair $X$ such that $\mathcal{W}=\mathcal{W}(X)$.
Let $W$ be the Coxeter group associated to $C$. 
By Proposition \ref{prop:miz}, there exist $w,w'\in W$ such that $X^{+}=I_{w}$ and $X^{-}=I_{w'}$, where $X^{+}$ is the Bongartz completion and $X^{-}$ is the co-Bongartz completion.
Let $J=\{ s_{i_{1}}, \ldots, s_{i_{\ell}}\}:=\{ s_{i}\mid I_{ws_{i}}\in [I_{w'},I_{w}]\}$.
Then $X$ coincides with a maximal common direct summand of $I_{w}, I_{ws_{i_{1}}},\ldots, I_{ws_{i_{l}}}$. 
By Proposition \ref{prop:kase35min}(2), we have 
\begin{align}
I_{w'}=\wedge\{ I_{w}, I_{ws_{i_{1}}},\ldots, I_{ws_{i_{l}}} \}.\notag
\end{align}
The poset isomorphism in Proposition \ref{prop:miz} yields $w'=\vee \{ w, ws_{i_{1}}, \ldots, ws_{i_{l}}\}$.
By Lemma \ref{lem:coxeter-sub}(2), we have $w'=ww_{0}(J)$, where $w_{0}(J):=\vee J$. 
Thus we obtain
\begin{align}
I_{w_{0}(J)}=\wedge\{ I_{1}, I_{s_{i_{1}}}, \ldots, I_{s_{i_{l}}}\}. \notag
\end{align}
Since a maximal common direct summand of $I_{1}, I_{s_{i_{1}}}, \ldots, I_{s_{i_{l}}}$ is given by the form $(P,0)$, where $P$ is a projective $A$-module, it follows from Proposition \ref{prop:kase35min}(2) that 
\begin{align}
\mathrm{Int}(P,0)=[I_{w_{0}(J)}, I_{1}].\notag
\end{align}
In particular, we obtain
\begin{align}
\mathcal{W}(P,0)=\Fac I_{1} \cap I_{w_{0}(J)}^{\perp}=\mod A \cap I_{w_{0}(J)}^{\perp}=I_{w_{0}(J)}^{\perp}.\notag
\end{align}
We can check that the Bongartz completion of $(P,0)$ coincides with $(A,0)$.
By Proposition \ref{prop:DIRRT-thm412}, we have equivalences of categories
\begin{align}
I_{w_{0}(J)}^{\perp}=\mathcal{W}(P,0)\cong \mod(\End_{A}(A)/[P])\cong \mod(A/\langle e\rangle),\notag
\end{align}
where $e\in A$ is the idempotent corresponding to $P$.
On the other hand, by \cite[Proposition 2.42 and Proposition 2.44]{Mur22}, we have an equivalence of categories
\begin{align}
I^{\perp}_{w_{0}(J)}\rightarrow \Fac I_{w}\cap I_{w'}^{\perp}=\mathcal{W}(X)=\mathcal{W}.\notag
\end{align}
The proof is complete.
\end{proof}

Now, we are ready to prove Theorem \ref{thm:FPdim-preproj}.

\begin{proof}[Proof of Theorem \ref{thm:FPdim-preproj}]
Let $Q$ be the Gabriel quiver of $A$.
Since $Q$ is symmetric, it is also the Gabriel quiver of the opposite algebra $A^{\mathrm{op}}$.
Let $\mathcal{S}(\neq \emptyset)$ be a semibrick in $\mod A^{\mathrm{op}}$ and $\mathcal{W}$ the corresponding wide subcategory by Proposition \ref{prop:ringel}.
By Proposition \ref{prop:mur-2424}, there exists an idempotent $e\in A^{\mathrm{op}}$ such that $\mathcal{W}\cong \mod(A^{\mathrm{op}}/\langle e\rangle)$. 
Since the Gabriel quiver of $A^{\mathrm{op}}/\langle e\rangle$ is a subquiver of $Q$, it follows from Lemma \ref{lem:spectrad-subquiver}(1) that the inequality $\rho(Q_{\mathcal{S}})\leq \rho(Q)$ holds. This implies that $\FPdim(A^{\mathrm{op}})=\rho(Q)$.
By the standard $\Bbbk$-duality, we have $\FPdim(A)=\FPdim(A^{\mathrm{op}})=\rho(Q)$.
The remaining assertion follows from the next proposition.
\end{proof}

\begin{proposition}\label{prop:spect-rad-dynkin}
Consider the Gabriel quiver $Q$ of a generalized preprojective algebra of Dynkin type.
Then the spectral radius $\rho(Q)$ is given by Table 1 and Table 2 in Theorem \ref{mainthm:FPdim_NA_PPA}. 
\end{proposition}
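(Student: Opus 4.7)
The plan is to reduce the proposition to an explicit computation of the spectral radius of the adjacency matrix $M(Q)$ in each of the finitely many cases, after describing $Q$ from the defining data of $\Pi(C,D)$ given in Section 5.2. From the construction, $Q$ has vertex set $\{1,\ldots,n\}$; the arrows $a_{ij}$ with $(i,j)\in\overline{\Omega}$ supply one arrow in each direction between any pair of vertices adjacent in the Dynkin diagram $\mathsf{X}_n$, so $Q$ has no multiple arrows (in agreement with Remark \ref{rem:tfin-quiver-nomult}(2)); and the relation $\rho_{1}$ forces $\epsilon_{i}^{c_{i}}=0$ vertex by vertex, so a loop at $i$ survives in $Q$ if and only if $c_{i}\ge 2$. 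Hence the underlying unoriented graph $\Gamma$ of $Q$ is the Dynkin diagram $\mathsf{X}_n$ with all edge multiplicities forgotten, and the diagonal of $M(Q)$ records exactly which $c_{i}$ exceed $1$.

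For the non-minimal symmetrizer (Table 2) one has $D=c D_{\min}$ with $c\ge 2$, so every $c_{i}\ge 2$ and a loop is attached at every vertex. Consequently $M(Q)=I+A(\Gamma)$, where $A(\Gamma)$ is the adjacency matrix of $\Gamma$, and $\rho(Q)=1+\rho(A(\Gamma))$. For $\mathsf{B}_n,\mathsf{C}_n,\mathsf{F}_4,\mathsf{G}_2$ the graph $\Gamma$ is the simply-laced path $\mathsf{A}_n,\mathsf{A}_n,\mathsf{A}_4,\mathsf{A}_2$ respectively, while for the simply-laced types $\mathsf{A}_n,\mathsf{D}_n,\mathsf{E}_6,\mathsf{E}_7,\mathsf{E}_8$ the graph $\Gamma$ is the Dynkin diagram itself. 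All entries of Table 2 then follow from the classical formula $\rho(A(\mathsf{Y}))=2\cos(\pi/h)$, with $h$ the Coxeter number of the simply-laced Dynkin diagram $\mathsf{Y}$.

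For the minimal symmetrizer (Table 1), the simply-laced types have no loops and $M(Q)$ is the ordinary Dynkin adjacency matrix, so the same Coxeter-number formula applies. The remaining cases are the non-simply-laced ones. For $\mathsf{C}_n$ the diagonal of $M(Q)$ is $(0,\ldots,0,1)$ and for $\mathsf{B}_n$ it is $(1,\ldots,1,0)$. I would handle both with the sinusoidal ansatz $v_{k}=\sin(k\theta)/\sin\theta$, which forces $v_{0}=0$ and automatically satisfies the interior three-term recurrence $v_{k-1}+v_{k+1}=2\cos\theta\,v_{k}$; the eigenvalue equation then reduces to a single trigonometric condition coming from the loaded endpoint. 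A short manipulation with standard sum-to-product identities yields $\sin((n+1)\theta)=\sin(n\theta)$ in the $\mathsf{C}_n$ case, equivalent to $(2n+1)\theta\in\pi(2\mathbb{Z}+1)$, so $\lambda=2\cos(\pi(2m+1)/(2n+1))$ with maximum $2\cos(\pi/(2n+1))$; and $\sin((2n+1)\theta/2)=0$ in the $\mathsf{B}_n$ case, so $\lambda=1+2\cos(2\pi m/(2n+1))$ with maximum $1+2\cos(2\pi/(2n+1))$. The two exceptional small cases $\mathsf{F}_4$ and $\mathsf{G}_2$ with minimal symmetrizer are then handled by direct computation of the characteristic polynomial: for $\mathsf{G}_2$ the matrix is $\left(\begin{smallmatrix}1&1\\1&0\end{smallmatrix}\right)$, with characteristic polynomial $x^{2}-x-1$, and for $\mathsf{F}_4$ a short expansion gives $x(x-1)(x^{2}-x-3)$, yielding largest roots $(1+\sqrt{5})/2$ and $(1+\sqrt{13})/2$ respectively.

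The main obstacle is the $\mathsf{B}_n$ and $\mathsf{C}_n$ minimal-symmetrizer calculation. The subtlety is not the interior recurrence, which is forced once the ansatz is chosen, but the correct packaging of the two boundary rows into a single trigonometric equation in $\theta$ when one endpoint carries a loop and the other does not; verifying that the resulting selection of angles really does hit the largest admissible eigenvalue is the step that takes the most care. Everything else is either an appeal to the classical Dynkin adjacency spectrum or a direct hand computation.
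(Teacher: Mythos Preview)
Your proposal is correct and follows essentially the same case structure as the paper: describe $Q$ from $(C,D)$, reduce the non-minimal symmetrizer to $M(Q)=I+A(\Gamma)$ and hence $\rho(Q)=1+\rho(\Gamma)$, quote the classical simply-laced Dynkin spectrum for the loop-free cases, and treat $\mathsf{F}_4,\mathsf{G}_2$ by direct computation. The only real difference is in the $\mathsf{B}_n/\mathsf{C}_n$ minimal case: the paper sets up the recurrence $f_{n+1}(x)=(x-1)f_n(x)-f_{n-1}(x)$ for the characteristic polynomials and solves it after the substitution $x=1+2\cos\theta$, whereas you work with the sinusoidal eigenvector ansatz $v_k=\sin(k\theta)$ and read off the boundary condition; these are the two standard, essentially dual, ways to diagonalize a perturbed tridiagonal matrix, and both land on the same angles $\theta=\tfrac{(2m+1)\pi}{2n+1}$ (type $\mathsf{C}_n$) and $\theta=\tfrac{2m\pi}{2n+1}$ (type $\mathsf{B}_n$).
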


\begin{proof}
Assume that $\mathsf{X}_{n}$ is a simply-laced Dynkin diagram.
Then the adjacency matrix of $Q^{\circ}$ is that of $\mathsf{X}_{n}$.
The spectral radius $\rho(\mathsf{X}_{n})$ of (the adjacency matrix of) $\mathsf{X}_{n}$ is given by \cite{DFGKK13}. 
If $D$ is minimal, then $Q=Q^{\circ}$ holds. Thus we have $\rho(Q)=\rho(\mathsf{X}_{n})$.
If $D$ is not minimal, then $Q$ is obtained from $Q^{\circ}$ adding one loop in each vertex. Thus we have $M(Q)=M(Q^{\circ})+E_{n}$, where $E_{n}$ is an identity matrix. 
This implies that $\rho(Q)=\rho(\mathsf{X}_{n})+1$ holds.

Assume that $\mathsf{X}_{n}$ is a non-simply-laced Dynkin diagram.
If $D$ is not minimal, then $Q$ is isomorphic to the Gabriel quiver of the generalized preprojective algebra of Dynkin type $\mathsf{A}_{n}$. 
Thus we have $M(Q)=M(\mathsf{A}_{n})+E_{n}$, and hence $\rho(Q)=\rho(\mathsf{A}_{n})+1$
In the following, we assume that $D$ is minimal.
Then the Gabriel quivers are given by the following list.

\begin{align}
\begin{array}{ll}
\xymatrix{1\ar@<1mm>[r]\ar@(ul,ur)&2\ar@<1mm>[l]\ar@(ul,ur)\ar@<1mm>[r]&\cdots \ar@<1mm>[l]\ar@<1mm>[r]&n-1\ar@<1mm>[l]\ar@(ul,ur)\ar@<1mm>[r]&n\ar@<1mm>[l]}&\text{if $\mathsf{X}_{n}=\mathsf{B}_{n}$},\\[6mm]
\xymatrix{1\ar@<1mm>[r]&2\ar@<1mm>[l]\ar@<1mm>[r]&\cdots \ar@<1mm>[l]\ar@<1mm>[r]&n-1\ar@<1mm>[l]\ar@<1mm>[r]&n\ar@<1mm>[l]\ar@(ul,ur)}&\text{if $\mathsf{X}_{n}=\mathsf{C}_{n}$},\\[6mm]
\xymatrix{1\ar@<1mm>[r]\ar@(ul,ur)&2\ar@<1mm>[l]\ar@(ul,ur)\ar@<1mm>[r]&3\ar@<1mm>[l]\ar@<1mm>[r]&4\ar@<1mm>[l]}&\text{if $\mathsf{X}_{n}=\mathsf{F}_{n=4}$},\\[6mm]
\xymatrix{1\ar@(ul,ur)\ar@<1mm>[r]&2\ar@<1mm>[l]}&\text{if $\mathsf{X}_{n}=\mathsf{G}_{n=2}$}.
\end{array}\notag
\end{align}
Note that the quiver of $\mathsf{B}_{1}$ has no loops and that of $\mathsf{C}_{1}$ has exactly one loop.
For $\mathsf{X}_{n}\in \{ \mathsf{F}_{n=4}, \mathsf{G}_{n=2}\}$, we can easily check that $\rho(Q_{A})=\frac{1+\sqrt{13}}{2}$ if $\mathsf{X}_{n}=\mathsf{F}_{n=4}$ and $\frac{1+\sqrt{5}}{2}$ if $\mathsf{X}_{n}=\mathsf{G}_{n=2}$.
Assume that $\mathsf{X}_{n}=\mathsf{B}_{n}$.
Let $f_{n}(x)$ be the characteristic polynomial of the adjacency matrix of $Q$.
Then we have a recurrence relation
\begin{align}\label{seq:rr-b1}
f_{n+1}(x)=(x-1)f_{n}(x)-f_{n-1}(x).
\end{align}
Let $x_{n}:=f_{n}(1+2\cos\theta)$.
By the equation \eqref{seq:rr-b1}, we have a recurrence relation
\begin{align}\label{seq:rr-b2}
x_{n+1}=2x_{n}\cos\theta-x_{n-1},
\end{align}
where $x_{1}=1+2\cos\theta$ and $x_{2}=1+2\cos\theta+2\cos2\theta$.
By solving the recurrence relation \eqref{seq:rr-b2}, we obtain the following equation.
\begin{align}
(e^{i\theta}-e^{-i\theta})x_{n}
&=2i(x_{2}\sin(n-1)\theta-x_{1}\sin(n-2)\theta)\notag\\
&=2i\left(2\sin\left(\frac{2(n+1)\theta}{2}\right)\cos\frac{\theta}{2}\right),\notag
\end{align}
where $i$ is the imaginary unit and $\displaystyle e^{x}:=\sum_{n=0}^{\infty}\frac{x^{n}}{n!}$. 
Since $e^{i\theta}-e^{-i\theta}=2i\sin\theta$ holds, we have 
\begin{align}
\sin\theta \cdot f_{n}(1+2\cos\theta)=2\sin\left(\frac{2(n+1)\theta}{2}\right)\cos\frac{\theta}{2}. \notag
\end{align}
Thus the all roots of the polynomial $f_{n}(x)$ are given by
\begin{align}
x=1+2\cos \left(\frac{2k\pi}{2n+1}\right)\quad(k=1,2,\ldots,n).\notag
\end{align}
This implies that $\rho(Q_{A})=1+2\cos(\frac{2\pi}{2n+1})$.
Similarly, we have the spectral radius for $\mathsf{X}_{n}=\mathsf{C}_{n}$.
The proof is complete.
\end{proof}

\subsection*{Acknowledgements}
The authors would like to thank Toshitaka Aoki for helpful comments on generalized preprojective algebras. 
The authors would also like to thank Sota Asai for sharing his result related to Proposition \ref{prop:brick-selfext}(1).


\begin{thebibliography}{99}
\bibitem[Ad]{Ad16} T.~Adachi, \emph{Characterizing $\tau$-tilting finite algebras with radical square zero}, Proc. Amer. Math. Soc. \textbf{144} (2016), no. 11, 4673--4685. 

\bibitem[AAC]{AAC18} T.~Adachi, T.~Aihara, A.~Chan, \emph{Classification of two-term tilting complexes over Brauer graph algebras}, Math. Z., \textbf{290} (2018), no. 1--2, 1--36.

\bibitem[AIR]{AIR14} T.~Adachi, O.~Iyama, I.~Reiten, \emph{$\tau$-tilting theory}, Compos. Math. {\bf 150} (2014), no. 3, 415--452.

\bibitem[Al]{Al86} J.L.~Alperin, \emph{Local representation theory}, Cambridge Stud. Adv. math., \textbf{11}, Cambridge University Press, Cambridge, 1986.

\bibitem[AZ]{AZ14} M.A.~Antipov, A.O.~Zvonareva, \emph{Two-term partial tilting complexes over Brauer tree algebras}, J. Math. Sci. (N.Y.) \textbf{202} (2014), no. 3, 333--345.

\bibitem[Ao]{Ao22} T.~Aoki, \emph{Classifying torsion classes for algebras with radical square zero via sign decomposition}, J. Algebra \textbf{610} (2022), 167--198.

\bibitem[As]{As20} S.~Asai, \emph{Semibricks}, Int. Math. Res. Not. IMRN \textbf{2020}, no. 16, 4993--5054.

\bibitem[ASS]{ASS06} I.~Assem, D.~Simson, A.~Skowro$\acute{\textnormal n}$ski, \emph{Elements of the representation theory of associative algebras. Vol. 1}, London Mathematical Society Student Texts, \textbf{65}, Cambridge University Press, Cambridge, 2006.

\bibitem[ARS]{ARS95} M.~Auslander, I.~Reiten, S.O.~Smal\o, \emph{Representation theory of Artin algebras}, Cambridge Studies in Advanced Mathematics, \textbf{36}. Cambridge University Press, Cambridge, 1995.

\bibitem[BB]{BB05} A.~Bj\"{o}rner, F.~Brenti, \emph{Combinatorics of Coxeter groups}, Graduate Texts in Mathematics, vol. \textbf{231}, Springer, New York, 2005.

\bibitem[BMR]{BMR07} A.B.~Buan, B.R.~Marsh, I.~Reiten, \emph{Cluster-tilted algebras}, Trans. Amer. Math. Soc. \textbf{359} (2007), no. 1, 323--332.

\bibitem[BMRRT]{BMRRT06} A.B.~Buan, B.R.~Marsh, M.~Reineke, I.~Reiten, G.~Todorov, \emph{Tilting theory and cluster combinatorics}, Adv. Math. \textbf{204} (2006), no. 2, 572--618.

\bibitem[CC1]{CC23} J.~Chen, J.~Chen, \emph{Frobenius--Perron theory of representation-directed algebras}, Front. Math. \textbf{18} (2023), no. 6, 1379--1395. 

\bibitem[CC2]{CC24} J.M.~Chen, J.Y.~Chen, \emph{Frobenius--Perron theory of the bounded quiver algebras containing loops}, Comm. Algebra \textbf{52} (2024), no. 2, 845--864.

\bibitem[CGWZ1]{CGWZ19} J.M.~Chen, Z.B.~Gao, E.~Wicks, J.J.~Zhang, X-.H. Zhang, H.~Zhu, \emph{Frobenius--Perron theory of endofunctors}, Algebra Number Theory \textbf{13} (2019), no. 9, 2005--2055.

\bibitem[CGWZ2]{CGWZ23} J.M.~Chen, Z.B.~Gao, E.~Wicks, J.J.~Zhang, X-.H. Zhang, H.~Zhu, \emph{Frobenius--Perron theory for projective schemes}, Trans. Amer. Math. Soc. \textbf{376} (2023), no. 4, 2293--2324.

\bibitem[DIRRT]{DIRRT23} L.~Demonet, O.~Iyama, N.~Reading, I.~Reiten, H.~Thomas, \emph{Lattice theory of torsion classes: Beyond $\tau$-tilting theory}, Trans. Amer. Math. Soc. Ser. B \textbf{10} (2023), 542--612.	

\bibitem[DFGKK]{DFGKK13} M.A.~Dokuchaev, N.M.~Gubareni, V.M.~Futorny, M.A.~Khibina, V.~V.~Kirichenko, \emph{Dynkin diagrams and spectra of graphs}, S\~{a}o Paulo J. Math. Sci. \textbf{7} (2013), no. 1, 83--104.

\bibitem[EJR]{EJR18} F.~Eisele, G.~Janssens, T.~Raedschelders, \emph{A reduction theorem for $\tau$-rigid modules}, Math. Z. \textbf{290} (2018), no. 3-4, 1377--1413.

\bibitem[ENO]{ENO05} P.~Etingof, D.~Nikshych, V.~Ostrik, \emph{On fusion categories}, Ann. of Math., (2) \textbf{162} (2005), no. 2, 581--642

\bibitem[FG]{FG19} C.~Fu, S.~Geng, \emph{Tilting modules and support $\tau$-tilting modules over preprojective algebras associated with symmetrizable Cartan matrices}, Algebr. Represent. Theory \textbf{22} (2019), no. 5, 1239--1260.

\bibitem[FK]{FK93} J.~Fr\"{o}hlich, T.~Kerler, \emph{Quantum Groups, Quantum Categories and Quantum Field Theory}, Lecture Notes in Math. \textbf{1542}, Springer-Verlag, New York, 1993. 

\bibitem[GLS]{GLS17} C.~Geiss, B.~Leclerc, J.~ Schr\"{o}er, \emph{Quivers with relations for symmetrizable Cartan matrices I : Foundations}, Invent. Math. \textbf{209} (2017), no. 1, 61--158.

\bibitem[IRTT]{IRTT15} O.~Iyama, I.~Reiten, H.~Thomas, G.~Todorov, \emph{Lattice structure of torsion classes for path algebras}, Bull. Lond. Math. Soc \textbf{47} (2015), no. 4, 639--650.

\bibitem[J]{J15} G.~Jasso, \emph{Reduction of $\tau$-tilting modules and torsion pairs}, Int. Math. Res. Not. IMRN \textbf{2015}, no. 16, 7190--7273.

\bibitem[K1]{K17} R.~Kase, \emph{Weak orders on symmetric groups and posets of support $\tau$-tilting modules}, Internat. J. Algebra Comput. \textbf{27} (2017), no. 5, 501--546.

\bibitem[K2]{Ka24} R.~Kase, \emph{From support $\tau$-tilting posets to algebras}, Trans. Amer. Math. Soc. \textbf{377} (2024), 3749--3766.

\bibitem[Kr]{Kr97} H.~Krause, \emph{Stable equivalence preserves representation type}, Comment. Math. Helv. \textbf{72} (1997), no. 2, 266--284. 

\bibitem[Mi]{Miz14} Y.~Mizuno, \emph{Classifying $\tau$-tilting modules over preprojective algebras of Dynkin type}, Math. Z. \textbf{277}, no. 3--4 (2014), 665--690.

\bibitem[Mu]{Mur22} K.~Murakami, \emph{PBW parametrizations and generalized preprojective algebras}, Adv. Math. \textbf{395} (2022), Paper No. 108144, 70 pp.

\bibitem[R]{R76} C.M.~Ringel, \emph{Representations of $K$-species and bimodules}, J. Algebra \textbf{41} (1976), no. 2, 269--302.

\bibitem[S]{Sc17} S.~Schroll, \emph{Brauer graph algebras}, arXiv:1612.00061v3.

\bibitem[SS]{SS07} D.~Simson, A.~Skowro$\acute{\textnormal n}$ski, \emph{Elements of the representation theory of associative algebras. Vol. 3}, London Mathematical Society Student Texts, \textbf{72}, Cambridge University Press, Cambridge, 2007.

\bibitem[W]{W19} E.~Wicks, \emph{Frobenius--Perron theory of modified ADE bound quiver algebras}, J. Pure Appl. Algebra \textbf{223} (2019), no. 6, 2673--2708.
\end{thebibliography}
\end{document}